\documentclass[letterpaper,11pt]{article}
\usepackage{graphicx}
\usepackage{amsmath}
\usepackage{amsfonts}
\usepackage{hyperref}
\usepackage{amsthm}
 \usepackage{amscd}
\usepackage{mathrsfs}
\usepackage{caption}
\usepackage{subcaption}
\usepackage{float}
\usepackage{amssymb} 

\usepackage[top=1in,left=1in,right=1in,bottom=1in]{geometry}

\newtheorem{theorem}{Theorem}[section]
\newtheorem{proposition}[theorem]{Proposition}
\newtheorem{assumption}[theorem]{Assumption}

\newtheorem{definition}[theorem]{Definition}

\newtheorem{remark}[theorem]{Remark}

\hypersetup{
	colorlinks = true,
	linkcolor  = red,
	citecolor  = green,
	filecolor  = cyan,
	urlcolor   = magenta,}

\newcommand{\ds}{\displaystyle}
\newcommand\beq{\begin{equation}}
\newcommand\ene{\end{equation}}

\numberwithin{equation}{section}

\title{Asymptotic  stability of  Bernstein-Greene-Kruskal  (BGK) waves, Landau Damping,  and scattering theory\thanks{2020 MSC, 35P25; 35Q83; 35Q85; 35Q86; 47A40; 85A05. }}
\author{
Ricardo Weder\thanks{weder@unam.mx.}\\
Departamento de F\'\i sica Matem\'atica\\
Instituto de Investigaciones en
Matem\'aticas Aplicadas y en Sistemas\\
Universidad Nacional Aut\'onoma de M\'exico\\
Apartado Postal 20-126, IIMAS-UNAM\\
 Ciudad de M\'exico, CP 01000, M\'exico}

\date{}

\begin{document}
\baselineskip=14pt

\maketitle
.
\begin{abstract}
We consider the two species (electrons and positive charged ions) Vlasov-Poisson system linearized around  BGK waves. We formulate the problem as an equivalent Vlasov-Amp\`ere system, that we write as a system of Schr\"odinger type  in an appropriate Hilbert space, and with a selfadjoint Vlasov-Amp\`ere operator as a {\it Hamiltonian}.  We develop a complete stationary scattering theory. We  identify the absolutely continuous spectrum  and the singular spectrum  of the Vlasov-Amp\`ere operator, we construct the generalized Fourier maps, we prove that the wave operators exist, are complete, satisfy Birman's invariance principle, and that the stationary formulae hold. Using these results we prove that the BGK waves are asymptotically stable. We obtain
 a precise description of the large time behaviour of the solutions to the Vlasov-Amp\`ere system.  Namely,  for large times the phase-space densities of electrons and ions  are asymptotic to the phase-space densities of solutions of the unperturbed  Vlasov-Amp\`ere system. This implies that they follow the trajectories  of  solutions to Newton's equations for electrons and ions with the potential of the  BGK wave, in the sense that they are transported along these trajectories. Furthermore, we prove that Landau damping holds, that is to say, the electric field tends to zero in pointwise sense for large times.

\end{abstract}

\section{Introduction}\label{intro}
The Vlasov-Poisson system is the standard kinetic model to describe the dynamics of  collisionless plasmas.
The stability of its steady states, i.e. time-independent solutions,   is a main topic in plasma physics. In this paper we study the  asymptotic stability  of the  Vlasov-Poisson system linearized around Bernstein-Greene-Kruskal waves (BGK waves), that are exact, space-inhomogeneous, space periodic steady states. Since their discovey by Bernstein, Greene, and Kruskal in 1957 \cite{bkg} they have been intensively studied. They play an important role in understanding the long-time dynamics of the Vlasov-Poisson system. Numerical evidence shows that they can act as attractors for the solutions near an homogeneous equilibrium. See, for example,  \cite{sch}, and \cite{vcvm}. 
We consider a two-species, electrons and positive charged ions, plasma in one time and one space dimension.  The unknowns of  this Vlasov-Poisson system  are the phase-space densities  of ions and of electrons and the induced  electric  field. The phase-space densities $f_\pm(t, x,v),$  respectively, of ions,  and electrons, with  $ t \in \mathbb R, x,v \in \mathbb R,$   give, respectively,  the density of  ions and of electrons at the position $x$ in space with velocity $v,$ at the time $t.$ The induced electric field  $E(t,x)$  is the electric field induced by the ions and the electrons.  The phase-space densities and the induced electric field satisfy the Vlasov-Poisson system

\begin{eqnarray}\label{0.1}
\partial_t f_\pm(t,x,v)+v \partial_x f_\pm(t,x,v)\pm \frac{e_\pm}{m_\pm}E(t,x)\partial_v f_\pm(t,x,v)=0,\\
\partial_x E(t,x)=   \rho(t,x),\label{0.2}\\
\label{0.3} \rho(t,x):= \int_{\mathbb R}  e_+ f_+(t,x,v)\, dv- e_-\int_{\mathbb R} f_-(t,x,v)\, dv+ \rho_{\rm e}(x),
\end{eqnarray} 
where $e_\pm$ are, respectively, the charge of the ions and of the electrons, $m_\pm$ are,  respectively, the mass of the ions and of the electrons, $E(t,x)$ is the induced electric field,  $\rho(t,x)$ is the charge density that is the sum of the charge density 
$$
 \int_{\mathbb R}  e_+ f_+(t,x,v)\, dv- e_-\int_{\mathbb R} f_-(t,x,v)\, dv
$$
induced by the ions and the electrons, and of $\rho_{\rm e}(x)$ that is the external electric charge that is constant in time and can be positive, negative, or zero.  We consider the general case where the plasma is nonneutral, i.e. we do not suppose that the total charge is zero. For a discussion of the physics of nonneutral plasmas see \cite{davidson}. For the derivation of the Vlasov-Poisson system \eqref{0.1}-\eqref{0.3} see chapter seven of  \cite{krall}.

To simplify the notation we take all the constants equal to one and consider the system,
\begin{eqnarray}\label{0.4}
\partial_t f_\pm(t,x,v)+v \partial_x f_\pm(t,x,v)\pm E(t,x) \partial_v f_\pm(t,x,v)=0,\\
\partial_x E(t,x)= \rho(t,x),\label{0.5}\\
\label{0.6} \rho(t,x):= \int_{\mathbb R}   f_+(t,x,v)\, dv-\int_{\mathbb R} f_-(t,x,v)\, dv+ \rho_e(x).
\end{eqnarray} 

 Equation \eqref{0.5} is the Gauss law.  We introduce the electric potential $\varphi(t,x)$ such that
\beq\label{0.7}
 E(t,x)=- \partial_x \varphi(t,x),
 \ene
  and from \eqref{0.5} we obtain the Poisson equation,
\beq\label{0.8}
- \partial^2_x \varphi(t,x)=  \rho(t,x).
\ene
 The  steady states, $(f_{0,\pm}(x,v), E_0(x),\varphi_0(x))$  are time-independent solutions to the  system \eqref{0.4}-\eqref{0.8}. Namely,
\begin{eqnarray}\label{0.9}
v \partial_x f_{0,\pm}(x,v)\pm E_0(x)\partial_v f_{0,\pm}(x,v)=0,\\
\partial_x E(x)=   \rho_0(x),\label{0.10}\\
\label{0.11} \rho_0(x):= \int_{\mathbb R}   f_{0,+}(x,v)\, dv-\int_{\mathbb R} f_{0,-}(x,v)\, dv+ \rho_{\rm e}(x),
\end{eqnarray} 
\beq\label{0.12}
 E_0(x)=- \partial_x \varphi_0(x),
 \ene
\beq\label{0.13}
- \partial^2_x \varphi_0(x)=  \rho_0(x).
\ene
We consider periodic steady states, the BGK waves,  of the form
   \beq\label{0.14}
f_{0,\pm}(x,v)= h_\pm\left(\mathcal E_\pm(x,v)\right),
\ene
with
\beq\label{0.15}
\mathcal E_\pm(x,v):= \frac{1}{2} v^2 \pm \varphi_0(x).
\ene
The functions $h_\pm$ are the energy distribution, respectively,  of ions and electrons. Further,   $\mathcal E_\pm$ is the energy that is constant in time for the  solutions to the characteristic   equations of the steady state for the ions, respectively, the electrons,
$$
\frac{d}{dt} x(t)= v(t), \qquad \dot{v}(t)= \mp \left(\frac{d}{d x}\varphi_0\right)(x(t)),
$$
or, equivalently, the solutions to Newton's equations
\beq\label{0.15b}
\ddot x(t)=  \mp \left(\frac{d}{d x}\varphi_0\right)(x(t)).
\ene
Remark that if $h_\pm$ are continuously differentiable $f_{0,\pm}(x,v)$ satisfy \eqref{0.9}.
Further, $\varphi_0(x)$ is the self-consistent potential that fulfills the Poisson equation,
\beq\label{0.16}
- \frac{d^2}{dx^2} \varphi_0(x)= q(\varphi_0(x))+ \rho_{\rm e}(x),
\ene
where,
\beq\label{0.17}
q(\lambda):=\int_{\mathbb R}   h_+\left( \frac{1}{2} v^2 + \lambda\right)\, dv-\int_{\mathbb R} h_-\left(\frac{1}{2} v^2 - \lambda\right)\, dv, \qquad \lambda\in \mathbb R.
\ene
We are interested in potentials $\varphi_0(x)$  that gives raise to regions of trapping of electrons and of ions. This trapping is  often present in BGK  waves.  It produces electron and ions holes, and it has important consequences in the dynamics of plasmas. See for example \cite{hut} and \cite{sch}. For this purpose, we assume that the potential   $\varphi_0(x)$ is a given function that is    even,  three  times continuously differentiable,  and  periodic. We denote the period by $P.$ Moreover,  we suppose that $\varphi_0(-P/2)=\varphi_0(P/2)= \min_{x \in [-P/2, P/2]} \varphi_0(x),$ and $\varphi_0(0)= \max_{x \in [-P / 2,P/2]}\varphi_0(x).$ Further, $\varphi_0(x)$ is strictly increasing for $x \in [-P/2,0]$ and strictly decreasing  for $ x\in [0, P /2].$ Moreover, the external charge $ \rho_{\rm e}(x)$ is computed  from the potential $\varphi_0(x)$ using \eqref{0.16}, \eqref{0.17}. A similar procedure is used in \cite{bruno2} in the  single species case, electrons, and an ion background  that acts as an external charge.

We are interested in studying   the  system \eqref{0.4}-\eqref{0.8}linearized around the BGK  wave \linebreak $(f_{0,\pm}(x,v), E_0(x),\varphi_0(x))$ . For this purpose, we consider solutions to \eqref{0.4}-\eqref{0.8} that are periodic in $x$ with period $P.$  Hence, we study
\eqref{0.4}-\eqref{0.8} for $(x,v) \in \mathbb T_{P} \times \mathbb R$ where $\mathbb T_{P}:= \mathbb R / P \mathbb Z$  is the one-dimensional torus of period $P$. We parametrize  $\mathbb T_{P}$ with the coordinate $ x \in [-P/2, P/2],$ where we identify $x=-P/2$ with $x=P/2.$ Observe that for periodic solutions \eqref{0.7} implies,
\beq\label{0.18}
\int_{-P/2}^{P/2} E(t,x) dx=0, \qquad t \in \mathbb R.
\ene
In the following we always assume that \eqref{0.18} holds.

We find it convenient to reformulate the   system  \eqref{0.4}-\eqref{0.8} as an equivalent 
Vlasov-Amp\`ere system that we introduce in Section~\ref{vabg}.  This has the advantage that, after linearization, the Vlasov-Amp\`ere system can be written as a Schr\"odinger type system  with the    Vlasov-Amp\`ere operator as a  {\it Hamiltonian}. The Vlasov-Amp\`ere operator is self-adjoint in an appropriate Hilbert space that we introduce. 
 This formulation turns out to be quite convenient   to bring to the fore the powerful methods of spectral and scattering theory of selfadjoint operators in order to study our problems. For a similar approach in the one specie case see \cite{bruno2}, \cite{bruno1}, and for the one specie case with a constant magnetic field see \cite{cdrw}.   Let us define the following operator
\beq\label{0.20}
\mathcal I f(x):= f(x)- \frac{1}{P} \int_{-P/2}^{P/2} f(x)\, dx.
\ene
In Section~\ref{vabg} we prove that the Vlasov-Poisson system \eqref{0.4}-\eqref{0.8} for solutions that satisfy \eqref{0.18} is equivalent to the following Vlasov-Amp\`ere system,
 \begin{eqnarray}\label{0.21}
\partial_t f_\pm(t,x,v)+v \partial_x f_\pm(t,x,v)\pm E(t,x)\partial_v f_\pm(t,x,v)=0, x \in (-P/2, P/2), t,v  \in \mathbb R,\\\label{0.22}
 \partial_t E(t,x)= \mathcal I \int_{\mathbb R} (-f_+(t,x,v)+ f_-(t,x,v))\, v \, dv,  x \in (-P/2, P/2) , t \in \mathbb R,\\
 \label{0.23}\int_{-P/2}^{P/2} E(t,x) dx=0,\qquad  t \in \mathbb R,\\ \label{0.24}
\partial_x E(0,x)=  \int_{\mathbb R}  f_+(0,x,v)\, dv -\int_{\mathbb R} f_-(0,x,v)\, dv+\rho_{e}(x),  x \in (-P/2, P/2).
\end{eqnarray} 
  Remark that  in \eqref{0.24} we assume that the Gauss law is valid initially at time zero.
  
We linearize the  system    \eqref{0.21}-\eqref{0.24} around  the BGK wave   $(f_{0,\pm}(x,v)= h_\pm(\mathcal E_\pm(x,v)), E_0(x)= -\partial_x \varphi_0(x))$  as follows. 
We assume that $h_\pm$ are continuously differentiable and that $h'_\pm <0.$
We take,
   \begin{eqnarray}\label{0.25}
  f_\pm(t,x,v)= f_{0,\pm}(x,v)+ \varepsilon  u_\pm(t,x,v)  \sqrt{|h_\pm'( \mathcal E_\pm(x,v)  )|}    ,\\ E(t,x)=E_0(x)+ \varepsilon F(t,x,v),\label{0.26}\\
  \int_{-P/2}^{P/2} F(t,x)\, dx=0.\label{0.27}
  \end{eqnarray}
We introduce \eqref{0.25}-\eqref{0.27} into  \eqref{0.21}-\eqref{0.24}, we keep the terms linear in $\varepsilon,$
and we obtain the linearized Vlasov-Amp\`ere system, 
   \begin{eqnarray}\label{0.28}
\partial_t u_\pm(t,x,v)+v \partial_x u_\pm(t,x,v)\mp \partial_x \varphi_0(x)\partial_ v u_\pm(t,x,v) \mp F(t,x) v \sqrt{|h'_\pm\left(\mathcal E_\pm(x,v)\right)|}
=0, \\\label{0.29}
 \partial_t F(t,x)= \mathcal I \int_{\mathbb R} \left(-u_+(t,x,v) \sqrt{|h'_+\left(\mathcal E_+(x,v)\right)|}      
  + u_-(t,x,v)   \sqrt{|h'_-\left(\mathcal E_-(x,v)\right)|}\right) \, v \,dv,\\ 
 \label{1.62a}\int_{-P/2}^{P/2} F(t,x) dx=0,\\\label{0.30}
\partial_x F(0,x)=  \int_{\mathbb R} ( u_+(0,x,v)\,  \sqrt{|h'_+\left(\mathcal E_+(x,v)\right)|}  -
 u_-(0,x,v)\, \sqrt{|h'_-\left(\mathcal E_-(x,v)\right)|})dv
\end{eqnarray} 
  where $x \in (-P/2, P/2),$ and  $t, v \in \mathbb R.$ To write the linearized Vlasov-Amp\`ere system as a Schr\"odinger type system we introduce some notation. We define
  \beq\label{0.31}
  U(t,x,v):= \begin{pmatrix} u_-(t,x,v)\\ u_+(t,x,v)\\ F(t,x)
  \end{pmatrix}.
  \ene
 Then, \eqref{0.28}, \eqref{0.29} are equivalent to the following  Vlasov-Amp\`ere system,
 \beq\label{0.32}
 i \partial_t  U= \mathcal A  U,
 \ene
 where the Vlasov-Amp\`ere operator $\mathcal A$ is given by
 \beq\label{0.33}
 \mathcal A= \mathcal A_0 + \mathcal V,
 \ene
 where,
 \beq\label{0.34}
 \mathcal A_0:= -i\begin{bmatrix} v \partial_x+\partial_x \varphi_0(x) \partial_v&0&0\\[.3cm]
 0&   v \partial_x-\partial_x \varphi_0(x) \partial_v&0\\[.3cm] 0&0&0\end{bmatrix},
 \ene
  and
    \beq\label{0.35}\mathcal V:= -i \begin{bmatrix} 0&0&  v \sqrt{|h'_-\left(\mathcal E_-(x,v)\right)|}\\[.3cm]
  0&0& - v \sqrt{|h'_+\left(\mathcal E_+(x,v)\right)|}\\[.3cm]
  -\mathcal I \int_{\mathbb R} \bullet \sqrt{|h'_-\left(\mathcal E_-(x,v)\right)|} \,v dv &      
  \mathcal I \int_{\mathbb R} \bullet   \sqrt{|h'_+\left(\mathcal E_+(x,v)\right)|} \, v \,dv&0
  \end{bmatrix}.
  \ene

 The operator $\mathcal A_0$ is the unperturbed Vlasov-Amp\`ere operator that describes the transport along the solutions to Newton's equations \eqref{0.15b} with the potential $\varphi_0(x)$ of the BGK wave and $\mathcal V$ is the perturbation that takes into account the interaction with the induced electric field $F(t,x).$ We consider the Vlasov-Amp\`ere system \eqref{0.32} in the Hilbert space of states,
  \beq\label{0.36} 
 \mathcal H:= L^2((-P/2,P/2) \times \mathbb R)\oplus  L^2((-P/2,P/2) \times \mathbb R)\oplus L^2_0((- P/2,P/2)),
 \ene
 where
 \beq\label{0.37}
 L^2_0((-P/2,P/2)):=\{ F \in L^2((-P/2,P/2)):  \int_{-P/2}^{P/2} F(x)\, dx=0 \}.
 \ene
The unperturbed Vlasov-Amp\`ere operator is  realized as a selfadjoint operator in $\mathcal H,$ that we also denote by $\mathcal A_0.$ Further,  the Vlasov-Amp\`ere operator $\mathcal A$ is selfadjoint with domain equal to the domain of $\mathcal A_0.$  Moreover, if  the Gauss law \eqref{0.30} holds at time zero, it holds for all times. 
  
  \begin{remark}\label{reno}{\rm We made the assumption  that $ h'_\pm <0$ to be able to write the linearized Vlasov-Amp\`ere system  as the Vlasov-Amp\`ere system \eqref{0.31}-\eqref{0.35} in the Hilbert space $\mathcal H$ \eqref{0.36}-\eqref{0.37}   with the selfadjoint Vlasov-Amp\`ere operator $\mathcal A$ as the {\it Hamiltonian}. However, for all our results   in this paper on the    Vlasov-Amp\`ere system \eqref{0.31}-\eqref{0.35} itself  this assumption is not necessary.}
  \end{remark}

 Note that, since $\mathcal A$ is selfadjont  the evolution given by \eqref{0.32} is unitary and hence, the norm in $\mathcal H$ of the solutions to the Vlasov-Amp\`ere system \eqref{0.32} are constant in time. So, we know from the scratch that the   Vlasov-Amp\`ere system is spectrally stable. Hence, we concentrate  in the asymptotic stability and we study the large time  behaviour,  in  particular  Landau-damping. To  solve these problems we use stationary scattering theory. Actually, stationary scattering theory is a powerful method that has been used in many problems in mathematical physics. See, for example,  \cite{aw}, \cite{kur},  \cite{rs3}, \cite{rw91}, \cite{ya}, \cite{ya1}, and the references quoted there.  In fact,  our approach  in stationary scattering theory  is general and  it can be used for other problems. Actually, we already used it  in \cite{wegal} in the gravitational case to study the asymptotic stability and Landau damping.
 
   We first construct a spectral representation of the unperturbed Vlasov-Amp\`ere operator $\mathcal A_0.$ For this purpose, we introduce appropriate energy-angle variables associated with the solutions to  Newton's equations with the potential $\varphi_0(x)$ of the  BGK wave.  In the region with trapped orbits for the electrons and the ions   the bottom of the  energy is an elliptic equilibrium point, and in the region with free orbits for electrons and ions the bottom of the energy is a hyperbolic equilibrium point. We have to take both  them into account, and this requires a careful definition of the angle variables in the various regions.  This allows us to obtain a spectral representation in terms of {\it trace maps} that we use in our perturbation analysis. Then, we study the  continuous and the singular spectra of the Vlasov-Amp \`ere operator, we construct the generalized Fourier maps and we prove that they are surjective, for this purpose our spectral representation in terms of {\it trace maps} plays a crucial role. Furthermore, we prove that the wave operators exist, are complete, are given by the stationary formulae, and that Birman's invariance principle holds.   We set up our perturbation analysis in spaces of H\" older continuous functions.
   The reason for this  is the following.  The perturbation term in the Vlasov-Amp\`ere  operator   includes an integral operator (see \eqref{0.33}, \eqref{0.35}, \eqref{6.1}, and \eqref{6.2}). The issue is that  the kernel of this integral operator is  singular at the elliptic and the hyperbolic points of the Hamiltonians for the motion of the electrons and the ions in the potential of the BGK wave. It is possible to  carefully analyze these singularities in spaces of   H\"older continuous functions, that  are less demanding concerning regularity than other spaces, like  Sobolev spaces.   
   
Using these results we obtain a precise description of the asymptotic stability of the linearized Vlasov-Amp\`ere system, and equivalently of the linearized Vlasov-Poisson system.  In Theorem~\ref{landam} we prove, for a large class of energy distributions $h_\pm$ and potentials $\varphi_0(x),$ 
that  the  phase-space densities $ f_\mp(t,x,v)$,   of the solutions to the Vlasov-Amp\`ere system  \eqref{0.32}-\eqref{0.35} with initial values in the subspace of absolute continuity of the Vlasov-Amp\`ere operator    are asymptotic for large times to the phase-space densities of solutions of the unperturbed Vlasov-Amp\`ere system,
\beq\label{0.38}
 i \partial_t  U= \mathcal A_0  U.
 \ene
 Namely, we prove in Theorem~\ref{landam},
 \beq\label{0.38xx}\begin{array}{l}
\lim_{t \to \infty}  \| f_-(t,\cdot,\cdot)-f^{(+)}_-(t,\cdot,\cdot) \|_{ L^2((-P/2,P/2) \times \mathbb R)}=0,
\\[.3cm]
\lim_{t \to \infty}  \| f_+(t,\cdot,\cdot)-   f^{(+)}_+(t,\cdot,\cdot) \|_{ L^2((-P/2,P/2) \times \mathbb R)}=0,
\\[.3cm]
\lim_{t \to -\infty}  \| f_-(t,\cdot,\cdot)-   f^{(-)}_-(t,\cdot,\cdot) \|_{ L^2((-P/2,P/2) \times \mathbb R)}=0,
\\[.3cm]
\lim_{t \to -\infty}  \| f_+(t,\cdot,\cdot)-   f^{(-)}_+(t,\cdot,\cdot) \|_{ L^2((-P/2,P/2) \times \mathbb R)}=0,
\end{array}
\ene 
 where  $f^{(+)}_\mp(t,x,,v)$ and  $f^{(-)}_\mp(t,x,v)$ are solutions to the  unperturbed Vlasov-Amp\`ere system \eqref{0.38}  that are defined in Theorem~\ref{landam} in terms of the initial values of the solution to the Vlasov-Amp\`ere system \eqref{0.32}-\eqref{0.35} and of the wave operators. We remark that the solutions to the Vlasov-Amp\`ere system  \eqref{0.32}-\eqref{0.35} with initial values in the subspace of absolute continuity of the Vlasov-Amp\`ere operator satisfy the Gauss law \eqref{0.30} for all $ t \in \mathbb R.$ This gives a precise description of the large time behaviour of the phase-space densities. Namely, they follow the trajectories  of the solutions to Newton's equations with the potential of the  BGK wave, in the sense that they are transported along these trajectories.          
 Moreover, we prove in Theorem~\eqref{landam} that  Landau damping holds for the electric field in pointwise sense. Namely, that, for $ 0<\alpha < 1/2,$
 \beq\label{0.39} 
\lim_{t \to \pm \infty} \| F(t,\cdot)\|_{\hat{C}_{\alpha}((-P/2,P/2))}=0,
\ene
where $  \| \cdot\|_{\hat{C}_{\alpha}((-P/2,P/2))} $ is the norm in the Banach space  $\hat{C}_{\alpha}((-P/2,P/2))$  of  H\" older continuous functions of order $\alpha$ in $(-P/2,P/2).$    To our knowledge, Theorem~\ref{landam} is the first time
where  Landau damping is proved to hold for a two species Vlasov-Poisson system in a nonhomogeneous background. 

The stability of BGK waves is currently a topic of active  research in the physics literature. See \cite{hut2024} and \cite{sch2012} for reviews.
Previous results in the mathematics literature   are the following. In \cite{gl} the linear spectral stability of nonhomogeneous BGK waves is proved in the cases of one and of two  species. In the pioneering work \cite{bruno2} B. Despr\'es proved for the first time Landau damping, i.e. the decay of the electric field,   in pointwise sense,  for  the one-dimensional linearized Vlasov-Poisson system with one species (electrons) in a nonhomogeneous background, with one region of trapped particles. He considers a Boltzmannian  energy distribution and solutions with small period. He postulates the properties of the potential of the steady state and  defines the fixed density of ions  by means of the Poisson equation. He  works with a  Vlasov-Amp\`ere system and a Lippmann-Schwinger variational equation. He proves that the Lippmann-Schwinger equation is well posed if the period is small enough. He uses a representation in terms of Hermite functions that is valid for Boltzmannian distributions.
See also \cite{bruno1} where
 the linearized one-dimensional Vlasov-Poisson system with one species (electrons),   Boltzmannian energy distribution, and a general electric potential  for the steady state is considered. A trace class criterion  is obtained for the perturbed and the unperturbed Vlasov Amp\`ere systems,  that implies  the existence and the completeness of the wave operators. The paper \cite{hm} considers a set up that is similar to the one of \cite{bruno2}.  However, \cite{hm} considers a general class of energy distributions. Further, in \cite{hm} solutions with small period are studied, The paper \cite{hm} proves that the linearized operator around the BGK wave has no embedded eigenvalues inside the essential spectrum. Further, in \cite{hm} it is proved  that for a subclass  of BGK waves, with monotone  energy distribution, for the solutions with  initial states in the intersection of the domain of the transport operator with the closure of its range  the time average of the $L^2$ norm of the electric field tends to zero for large times, provided that the parameter that controls the size of the period and of the electron hole  is small enough. The paper \cite{bghp} proves that the linear stability/instability of small BGK waves in the one species case (electrons) is determined by the sign of the derivative of the  energy distribution at zero energy. Furthermore, there are results on the instability of BGK waves against perturbations with a period that is a multiple, larger than one, of the period   of the BGK wave. See for example, \cite{guostra}.

  Landau damping is a fundamental phenomenon in plasma physics that was discovered in the seminal  work of L. D Landau \cite{landau}. It has been extensively studied. A major breakthrough was the work of Mouhot and Villani  \cite{villani}, who proved   Landau damping in the nonlinear case. See also \cite{bmm} and \cite{grn}.  When the Vlasov-Poisson system is considered with an external constant magnetic field  Landau damping disappears and the electric field is oscillatory in time, as was shown by Bernstein \cite{bernstein}. For the study of this problem in the mathematics literature see \cite{bedro} and 
 \cite{cdrw}. In \cite{cdrw} it was also proved that there are time independent solutions. For recent reviews of these results, that  are for perturbations of homogeneous steady states, see  \cite{bedro1} and \cite{nyu}.

 The paper is organized as follows. In Section~\ref{notdef} we   introduce  notations and  definitions that we use. In Section~\ref{twospec} we  introduce  the BGK waves that we consider. In Section~\ref{vabg} we introduce the Vlasov-Amp\`ere system and we prove that it is equivalent to the Vlasov-Poisson system. Furthermore, we linearize the Vlasov-Amp\`ere system around the BGK wave. Moreover, we formulate the linearized Vlasov-Amp\`ere system as an equation  of Schr\"odinger type in the Hilbert space $\mathcal H$ that we introduce. In Section~\ref{enan} we introduce the appropriate energy-angle variables for the regions with trapped and with free orbits for the electrons and for the ions.  In Section~\ref{unp} we construct a selfadjoint  realization of the formal unperturbed Vlasov-Amp\`ere operator and we construct  a spectral representation in terms of {\it trace maps}.
In Section~\ref{vlasamp} we obtain our results in the absolutely continuous and the singular spectra of the Vlasov-Amp\`ere operator.    In Section~\ref{fouma}  we construct the generalized Fourier maps and we prove that they are surjective. In Section~\ref{wave} we prove that the wave operators exist, are complete, are given by the stationary formulae, and that Birman's invariance principle holds. In Section~\ref{ladam} we obtain our results   on the large time asymptotic of the distribution functions and on the Landau damping of the electric field.
Finally, in Appendix~\ref{apex} we give auxiliary results on the energy-angle variables that we need.

\section{Notations and definitions}\label{notdef}

We denote by  $\mathbb S_1$  the unit circle.   We parametrize  $S_1$ with the coordinate $ \mu \in [0, 1],$ where we identify $\mu=0$ with $\mu==1.$
 Further,   for  $ P >0$ we denote by $\mathbb T_{P}:= \mathbb R  / P\mathbb Z$   the one-dimensional torus of period $P$. We parametrize  $\mathbb T_{P}$ with the coordinate $ x \in [-P/2, P/2],$ where we identify $x=-P/2$ with $x=P/2.$  
 For a set $I \subset \mathbb R$   we designate by  $\chi_I(x)$ the characteristic function of $I.$  
 For any interval $[a,b]$ we denote,
 $
 C^1_{\rm p}([a,b]):=\{ f \in C^1([a,b]): f(a)=f(b)\}.
 $
For a separable Hilbert space $ \mathcal G,$  for an open  interval $I \subset \mathbb R,$ and for $0< \alpha \leq 1,$  we denote by  $C_\alpha(I, \mathcal G)$ the Banach space of all bounded  functions defined on $I,$ with values in $\mathcal G,$ that are H\"older continuous with exponent $\alpha.$  
Further, we designate by $\hat{C}_\alpha(I, \mathcal G)$ the completion of 
$C^\infty(I, \mathcal G)$ in the norm of $C_\alpha(I, \mathcal G).$ Clearly $ \hat{C}_\alpha(I,\mathcal G) \subset C_\alpha(I, \mathcal G).$ If $I$ is bounded, by Proposition ~A.1 of \cite{wegal}, for $ \alpha_1 > \alpha_2 $ we have, $C_{\alpha_1}(I,\mathcal G) \subset  
\hat{C}_{\alpha_2}(I,\mathcal G).$ 
Moreover, if $I$ is bounded we denote,
$
\hat{C}_{\alpha,0}(I, \mathcal G):=\{ f \in \hat{C}_\alpha(I, \mathcal G): \int_I f=0  \}.
$
In the case $\mathcal G=\mathbb C$ we use the notation $C_\alpha(I), \hat{C}_\alpha(I),$ and $\hat{C}_{\alpha,0}(I).$
Let    $I$ be a set of real numbers, $\mu$ a sigma-finite complete measure on $I,$ and $\mathcal G$ a Hilbert space. By $L^2(I, \mathcal G; \mu)$ we denote the Hilbert space of all  functions from $I$ into $\mathcal G$ that are measurable and square integrable (Bochner)  with respect to $\mu.$ 
If $\mu$ is the Lebesgue measure we use the notation  $L^2(I, \mathcal G).$  For the definition of these spaces see \cite{hp}.  
For an interval $(a,b)$ we denote by $H^1((a,b))$  the Sobolev space of all  complex-valued, square-integrable functions defined on $(a,b)$ whose  derivate   in distribution sense  is given by a square-integrable function \cite{adams}. We denote by $H^{1,{\rm p}}((a,b))$ the closed subspace of $H^1((a,b))$  of all  functions $f$ in $H^1((a,b))$ that satisfy $f(a)=f(b).$  Moreover we designate, $ H^{1,0}((a, b)):=\{ f \in H^{1}(a,b): \int_{(a, b)}  f(x) dx =0 \}.$
For an interval $I$ and a separable Hilbert space $\mathcal G,$ we denote by $H_1(I, \mathcal G)$ the 
closure of $C^\infty_0(\overline{I}, G)$ in the norm
$
\begin{array}{l}
\| f\|_{H_{1}(I, G)}:= \left[  \| f\|^2_{L^2(I, G)} +|\| f'  \|^2_{L^2(I, \mathcal G)}\right]^{1/2}.
\end{array}
$
$H_1(I, \mathcal G)$ is a Hilbert space.
 We denote by $C$ a generic constant that does not have  to take the same value when it appears in different places.

\section{The BGK waves} \label{twospec}
We consider periodic   steady states, the BGK waves, of the form \eqref{0.14}, \eqref{0.15}.
In the following assumption we state conditions that we assume on the energy distributions  $h_\pm$ and on the potential of the BKG waves. As mentioned in the introduction, we assume that the potential satisfies assumptions that correspond to the existence of  electron and ion holes, and that the external charge $\rho_{\rm e}(x)$ is computed used \eqref{0.16} and \eqref{0.17}. 
 \begin{assumption} \label{assum0} {\rm
The following holds.
\begin{enumerate}
\item[\rm (a)] The energy distributions  $h_\pm(\lambda)$ are nonnegative   continuously differentiable functions defined for $\lambda \in \mathbb R$ such  that  $h_\pm(v^2/2)$  are integrable functions of $v\in \mathbb R.$

\item[(b)]
The potential $\varphi_0(x)$ is real valued,  even,  three  times continuously differentiable and periodic with period $P.$ 
 Moreover, $\varphi_0(-P/2)=\varphi_0(P/2)= \min_{x \in [-P/2, P/2]} \varphi_0(x),$ and
$\varphi_0(0)= \max_{x \in [-P / 2,P/2]}\varphi_0(x).$ Further, $\varphi_0(x)$ is strictly increasing for
$x \in [-P/2,0]$ and strictly decreasing  for $ x\in [0, P /2].$ Moreover, $\varphi''(-P/2)= \varphi''(P/2) >0,$ and
 $\varphi''(0) <0.$ The charge density $\rho_{\rm e}(x)$ is computed from the potential   $\varphi_0(x)$ using  \eqref{0.16} and \eqref{0.17}.

\item[\rm (c)]
The following inequalities hold,

\beq\label{1.22bb}
(\varphi'_0(x))^2+2 (\varphi_0(-P/2)-\varphi_0(x)) \varphi_0''(x) > 0, \qquad  \,\text{\rm for a.e.}\,    x \in [-P, 0],
\ene
and
\beq\label{1.22b}
(\varphi'_0(x))^2+2 (\varphi_0(0)-\varphi_0(x)) \varphi_0''(x) > 0, \qquad  \,\text{\rm for a.e.}\,    x \in [-P/2, P/2].
\ene

\end{enumerate}
}
\end{assumption}
In Item (c) in Assumption~\ref{assum0}  equation  \eqref{1.22bb}, respectively equation  \eqref{1.22b},   is a sufficient condition  for the strict monotonicity of the period of the periodic orbits of Newton's equation  \eqref{0.15b} with the potential of the BGK wave, for  the ions, respectively the electrons.   See \cite{cw} and the proof of items (d) and  (i) of Proposition~\ref{proper}. If in particular cases it is possible to prove directly the monotonicity of these periods, item (c) in Assumption~\ref{assum0} can be dropped.  An example of a potential that fulfills Asumption~\ref{assum0} is the potential of the HMF  (Hamiltonian Mean Field) model   $\varphi_0(x) = \beta \cos{\frac{2\pi x}{P} }$ with a positive constant $\beta.$ See  \cite{bsdn}- \cite{hm3}, \cite{hm6}-\cite{hm12}, \cite{hm13},  \cite{hm1}, and \cite{hm2}.  
\section{The Vlasov-Amp\`ere system}\label{vabg}
We are interested in studying   the Vlasov-Poisson system \eqref{0.4}-\eqref{0.8} linearized around  BGK waves that fulfill Assumption~\ref{assum0}. For this purpose, we consider solutions to \eqref{0.4}-\eqref{0.8} that are periodic in $x$ with period $P.$ We recall  that for periodic solutions \eqref{0.7} implies  \eqref{0.18}. 
In the following we always assume that \eqref{0.18} holds, and we study
\eqref{0.4}-\eqref{0.8} for $(x,v) \in \mathbb T_{P} \times \mathbb R.$ 
As mentioned in the introduction, we find it convenient to reformulate the  Vlasov-Poisson system  \eqref{0.4}-\eqref{0.8} as an equivalent Vlasov-Amp\`ere system, as was done in 
 \cite{bruno2}, \cite{bruno1}, and  \cite{cdrw} in the one species case.
    
Suppose that $(f_\pm, E)$ is a solution to  \eqref{0.4}-\eqref{0.6} for $ x \in (-P/2, P/2), t, v \in \mathbb R,$ that fulfills \eqref{0.18} and with $f_\pm$  that tend to zero fast enough as  $v\to \pm \infty.$ Then, 
\beq\label{1.39}
\partial_x\left( \partial_t E(t,x)+ \int_{\mathbb R}\,  (f_+(t,x,v)-f_-(t,x,v)) \, v \,dv \right)= 0.
\ene
By \eqref{1.39},
\beq\label{1.40}
 \partial_t E(t,x)+ \int_{\mathbb R} (f_+(t,x,v)-f_-(t,x,v)) \, v \,dv =f(t),
 \ene
 for a function $f$ that only depends on $t.$ Further, integrating both sides of \eqref{1.40}  from $x= -P/2$
  to $ x= P/2$ and using \eqref{0.18} we obtain,
 \beq\label{1.41}
 P f(t)= \int_{-P/2}^{P/2} \, dx \left( \int_{\mathbb R}\,  (f_+(t,x,v)-f_-(t,x,v)) \, v\, dv\right).
 \ene
 By \eqref{1.40} and \eqref{1.41} we get,
 \beq\label{1.42}
 \partial_t E(t,x)= \mathcal I \int_{\mathbb R} (-f_+(t,x)+ f_-(t,x))\, v dv, \qquad x \in (-P/2, P/2), t \in \mathbb R,
 \ene
where $\mathcal I$  is the operator defined in \eqref{0.20}.
Hence, we have proved that the solution $(f_\pm, E)$ satisfies the Vlasov-Amp\`ere system \eqref{0.21}, \eqref{0.22}. On the other hand, assume  that $(f_\pm, E)$ is a solution to the Vlasov-Amp\`ere system \eqref{0.21}, \eqref{0.22} for  $ x \in (-P/2, P/2), t, v \in \mathbb R,$ that fulfills \eqref{0.18}  and that tends to zero as $v\to \pm \infty.$ Then,
\beq\label{1.44}
\begin{array}{l}
\partial_t(  \partial_xE(t,x)-  \int_{\mathbb R} (f_+(t,x)- f_-(t,x))\,  dv -\rho_e(x))= \\\partial_x  \int_{\mathbb R} (-f_+(t,x)+ f_-(t,x))\, v dv -\partial_t   \int_{\mathbb R} (f_+(t,x)- f_-(t,x)\,  dv =0.
\end{array}
\ene
Equation \eqref{1.44} shows that  Gauss law \eqref{0.5} is satisfied for all times provided that it holds initially at time $t=0.$
Summing up we have proved that the  system   \eqref{0.4}-\eqref{0.6},  for $ x \in (-P/2, P/2), t, v \in \mathbb R,$ for solutions that fulfills \eqref{0.18}  is equivalent to the Vlasov-Amp\`ere system \eqref{0.21}-\eqref{0.24}.  Recall that  in \eqref{0.24} we assume that the Gauss law is valid initially at time zero. Note that when we formulate the Vlasov-Poisson system as the equivalent Vlasov-Amp\`ere system we do not introduce the electric potential \eqref{0.7}. However, we keep \eqref{0.18} that follows from \eqref{0.7} for  $\varphi(t,x)$  periodic in $x.$
  
   In what follows we study the Vlasov-Amp\`ere system \eqref{0.21}- \eqref{0.24}.
  \subsection{Linearization}\label{linear}
  Let $(f_{0,\pm}(x,v)= h_\pm(\mathcal E_\pm(x,v)), E_0(x)= -\partial_x \varphi_0(x))$ be a  BGK wave that satisfies Assumption~\ref{assum0}. Clearly,  $(f_{0,\pm}(x,v)=h_\pm(\mathcal E_\pm(x,v)), E_0= -\partial \varphi_0(x))$ is a solution to the Vlasov-Poisson system \eqref{0.4}-\eqref{0.8}, \eqref{0.18}  and if $h_\pm(\mathcal E_\pm(x,v)) v$ are integrable functions of $v$ for $ x \in [-P/2, P/2]$ it solves  the Vlasov-Amp\`ere system \eqref{0.21}-\eqref{0.24}. We linearize the Vlasov-Poisson system \eqref{0.4}-\eqref{0.8}, \eqref{0.18} around  $(f_{0,\pm}(x,v)= h_\pm(\mathcal E_\pm(x,v)), E_0(x)= -\partial_x \varphi_0(x))$ as follows. We take
  \begin{eqnarray}\label{1.49}
  f_\pm(t,x,v)= f_{0,\pm}(x,v)+ \varepsilon g_\pm(t,x,v),\\ E(t,x)=E_0(x)+ \varepsilon F(t,x,v),\label{1.50}\\
  \int_{-P/2}^{P/2} F(t,x)\, dx=0.\label{1.51}
  \end{eqnarray}
  We introduce \eqref{1.49}-\eqref{1.51} into \eqref{0.4}-\eqref{0.6}, \eqref{0.18},
  we keep only the terms linear in $\varepsilon$ and we obtain the linearized Vlasov-Poisson  system
  \begin{eqnarray}\label{1.52a}
  \partial_t g_\pm(t,x,v)+v \partial_x g_\pm(t,x,v)\mp \partial_x \varphi_0(x)\partial_ v g_\pm(t,x,v) \pm F(t,x) v h'_\pm\left(\mathcal E_\pm(x,v)\right)
=0, \\\label{1.53a}
\partial_x F(t,x)= \rho(t,x), \\
\label{1.54a} \rho(t,x):= \int_{\mathbb R}   g_+(t,x,v)\, dv-\int_{\mathbb R} g_-(t,x,v)\, dv,\\\label{1.54ab}\int_{-P/2}^{P/2} F(t,x) dx=0.
\end{eqnarray}
We linearize the  Vlasov-Amp\`ere system \eqref{0.21}-\eqref{0.24} in the same way and we obtain the linearized Vlasov-Amp\`ere system, 
   \begin{eqnarray}\label{1.52}
\partial_t g_\pm(t,x,v)+v \partial_x g_\pm(t,x,v)\mp \partial_x \varphi_0(x)\partial_ v g_\pm(t,x,v) \pm F(t,x) v h'_\pm\left(\mathcal E_\pm(x,v)\right)
=0, \\\label{1.53}
 \partial_t F(t,x)= \mathcal I \int_{\mathbb R} (-g_+(t,x,v)+ g_-(t,x,v))\, v dv,\\
 \label{1.54}\int_{-P/2}^{P/2} F(t,x) dx=0,\\\label{1.55}
\partial_x F(0,x)=  \int_{\mathbb R} ( g_+(0,x,v)\, dv -\int_{\mathbb R} g_-(0,x,v)\, dv.
\end{eqnarray} 

As in the proof of \eqref{1.44} we prove  that the Gauss law 
\beq\label{1.55b}
\partial_xF(t,x)=  \int_{\mathbb R} g_+(t,x,v) dv   - \int_{\mathbb R}g_-(t,x,v)\,  dv, \qquad x\in (-P/2, P/2), t \in \mathbb R 
 \ene
 is satisfied for all times since by \eqref{1.55} it holds for $t=0.$ 
  The systems \eqref{1.52a}-\eqref{1.54ab} and \eqref{1.52}-\eqref{1.55} are equivalent. The proof is the similar to the proof of the equivalence of the systems  \eqref{0.4}-\eqref{0.8}, \eqref{0.18}  and  \eqref{0.21}-\eqref{0.24} that we gave above.
  
  In what follows we study the linearized Vlasov-Amp\`ere system \eqref{1.52}-\eqref{1.55}. 
 To formulate   \eqref{1.52}-\eqref{1.55} as a system of Schr\"odinger type in an appropriate Hilbert space we assume that $h'_\pm(\lambda) <0,$ and we introduce the  new dependent variables $  u_\pm(t,x,v)$ such that,
  \beq\label{1.59}
  g_\pm(t,x,v)=  u_\pm(t,x,v) \, \sqrt{|h_\pm'( \mathcal E_\pm(x,v)  )|}.
  \ene
  In terms of $(u_\pm(t,x,v), F(t,x))$ the linearized Vlasov-Amp\`ere system  \eqref{1.52}- \eqref{1.55}  is written as in \eqref{0.28}-\eqref{0.30}.
As mentioned in the introduction, we write the linearized Vlasov-Amp\`ere system \eqref{0.28}-\eqref{0.30} as the Vlasov-Amp\`ere system of Schr\"odinger type \eqref{0.31}-\eqref{0.35} in the Hilbert space of states \eqref{0.36}, \eqref{0.37}. Further, the Gauss law  \eqref{0.30} holds at $t=0.$ Recall that, as mentioned in Remark~\ref{reno},  the assumption that  $h'_\pm(\lambda) <0$ is only used to write the linearized  Vlasov-Amp\`ere system   \eqref{1.52}-\eqref{1.55}  as in \eqref{0.32}-\eqref{0.35} and it is not necessary for our further results in this paper.
   
   Summing up, our problem is reduced to the study of the solutions to the Vlasov-Amp\`ere system \eqref{0.31}, \eqref{0.32},  with the Vlasov-Amp\`ere operator defined in \eqref{0.33}, \eqref{0.34}, and \eqref{0.35}, in the Hilbert space $\mathcal H$ defined in \eqref{0.36} and \eqref{0.37}. Further, we require that initially, at time zero, the solutions satisfy the Gauss law \eqref{0.30}.

     By a direct computation it can be verified that the  Vlasov-Amp\`ere operator $\mathcal A$  with domain $ C^1_{\rm p}([-P/2,P/2])\otimes C^1_0(\mathbb R)\oplus  C^1_{\rm p}([-P/2,P]/2)\otimes C^1_0(\mathbb R)\oplus L^2_0((-P/2,P/2))$ is a  symmetric operator in $\mathcal H.$ We will construct a selfadjoint extension of $\mathcal A,$ but before that we introduce  the energy-angle variables in the next section.

  \section{The energy- angle variables}\label{enan}
   Action-angle variables \cite{arnold} are extensively used in the study of the Vlasov-Poisson system. See, for example, \cite{bsdn}, \cite{gl},  \cite{mo1}, and \cite{mo2}. In this section  we use   closely related energy-angle variables that  play an important role in our study of the spectral and scattering theory of the Vlasov-Amp\`ere operator. 

  In this section we always assume that items (b)  and (c) of Assumption~\ref{assum0} hold. We begin with the 
  discussion of the appropriate angles.
  \subsection{Angles for the electrons} \label{anelec}
  We have to distinguish the region with trapped orbits  and the region with  free orbits  for the  solutions of  Newton's equation  for the electrons  with the potential of the  BGK wave
\beq\label{1.73}
\ddot x(t)=\varphi_0(x(t)).
\ene
Recall that the energy   that is constant in time  for the solutions to \eqref{1.73} is given by
  \beq\label{1.74}
  \mathcal E_-:= \frac{1}{2} v^2- \varphi_0(x).
  \ene
  \subsubsection{The region with trapped orbits for the electrons}
  Let us denote
  \beq\label{1.76}
  I_{-, 0}:= (-\varphi_0(0), -\varphi_0(P/2)).
  \ene
  For $ \mathcal E_- \in I_{-,0}$ the solutions to \eqref{1.73} are periodic orbits that oscillate between  $ x_-(\mathcal E_-)$ and  $ x_+(\mathcal E_-) $ with period,
  \beq\label{1.77}
  T_{-,0}(\mathcal E_-)= 2 \int_{x_-(\mathcal E_-)}^{x_+(\mathcal E_-)}
   \frac{1}{\sqrt{2(\mathcal E_-+\varphi_0(x))}}\, dx,\qquad \mathcal E_- \in I_{-,0}.
  \ene
  The quantities     $x_\pm(\mathcal E_-)$    are the turning points of the orbit. They satisfy      $ -P/2 < x_-(\mathcal E_-) < 0 < x_+(\mathcal E_-) < P/2$      and they are given by,
  \beq\label{1.78}
  -\varphi_0(x_\pm(\mathcal E_-))= \mathcal E_-, \qquad \mathcal E_- \in I_{-,0}.
  \ene
  Since $\varphi_0(x)$ is even, $x_-(\mathcal E_-)= -x_+(\mathcal E_-).$
  In this region we define the angle as follows,
  \beq\label{1.79}\begin{array}{c}
  \theta_{-,0}(x,\mathcal E_-):=  \ds \frac{1}{T_{-,0}(\mathcal E_-)}  \int_{x_-(\mathcal E_-)}^{x}
  \ds \frac{1}{\sqrt{2(\mathcal E_-+\varphi_0(y))}}\, dy,\\[.3cm] \mathcal E_-\in I_{-,0},\qquad
    x_-(\mathcal E_-)\leq x \leq x_+(\mathcal E_-).
    \end{array}
    \ene
    Note that $ 0 \leq \theta_{-,0}(x,\mathcal E_-) \leq 1/2.$
   \subsubsection{The region with free orbits for the electrons}
   We define,
   \beq\label{1.80}
   I_{-,1}:= ( -\varphi_0(P/2), \infty).
   \ene
   In this region the solutions to  \eqref{1.73} with energy $\mathcal E_-\in  I_{-,1}$ travel from
   $x=-P/2$ to $x= P/2$ with positive velocity or from    $x=P /2$ to $x= -P/2$ with negative velocity. The period,
   or the travel time, in this case is the time that they take traveling from  $x=-P/2$ to $x= P/2$  or from     $x=P/2 $ to $x= -P/2.$  It is given by
   \beq\label{1.81}
   T_{-,1}(\mathcal E_-)=  \int_{-P/2}^{P/2}
   \frac{1}{\sqrt{2(\mathcal E_-+\varphi_0(x))}}\, dx,\qquad \mathcal E_- \in I_{-,1}.
  \ene
    In this region we define the angle as follows,
  \beq\label{1.82}\begin{array}{c}
  \theta_{-,1}(x,\mathcal E_-):= \ds  \frac{1}{T_{-,1}(\mathcal E_-)}  \int_{-P/2}^{x}
  \ds \frac{1}{\sqrt{2(\mathcal E_-+\varphi_0(y))}}\, dy,\\ [.3cm]\mathcal E_-\in I_{-,1},\qquad
    -P/2\leq x \leq  P/2.
   \end{array}
    \ene
   Note that $ 0 \leq \theta_{-,1}(x,\mathcal E_-) \leq 1.$
  
  \subsection{Angles  for the ions} \label{anions}
As in the case of the electrons we have  to consider the region with trapped orbits and the region with free orbits 
  for the  solutions of  Newton's equation  for the ions  with the potential of the  BGK wave 
\beq\label{1.83}
\ddot x(t)=-\varphi_0(x(t)).
\ene
Recall that the energy   that is constant in time for the solutions to \eqref{1.83} is given by
  \beq\label{1.84}
  \mathcal E_+:= \frac{1}{2} v^2+ \varphi_0(x).
  \ene

  \subsubsection{The region with trapped orbits for the ions}
  Let us denote
  \beq\label{1.86}
  I_{+, 0}:= ( \varphi_0(P/2), \varphi_0(0)).
  \ene
In this region there are two types of solutions to \eqref{1.83} with energy  $\mathcal E_+$ in $ I_{+, 0}$. First, the trapped orbit  that travels  from  $ x=-P/2$ to $x_-(\mathcal E_{+})<0,$ and back to $x=-P/2.$ The quantity  $ x_-(\mathcal E_{+}) < 0$ is the smallest of the two solutions to
\beq\label{1.87}
    \varphi_0(x)=\mathcal E_+.
    \ene
   The  period, or the travel time is  $T_{+,0}(\mathcal E_+)/2$    where
\beq\label{1.88}
   T_{+,0}(\mathcal E_+)= 4 \int_{-P/2}^{x_-(\mathcal E_+)}
   \frac{1}{\sqrt{2(\mathcal E_+-\varphi_0(x))}}\, dx,\qquad \mathcal E_+ \in I_{+,0}.
  \ene
  In this region we define the period, or the travel time, as $T_{+,0}(\mathcal E_+)/2$ to simplify the notation later on. Note that the quantity $T_{+,0}(\mathcal E_+)$ is the period in phase space $(x,v)$ upon the identification of
  $-P/2$ with $P/2.$
  For these solutions we define the angle   as follows
    \beq\label{1.89}\begin{array}{c}
  \theta_{+,0,-}(x,\mathcal E_+):= \ds  \frac{1}{ T_{+,0}(\mathcal E_+)}  \int_{-P/2}^{x}
  \ds \frac{1}{\sqrt{2(\mathcal E_+-\varphi_0(y))}}\, dy,\\ [.3cm]\mathcal E_+\in I_{+,0},\qquad
    -P/2 \leq x \leq x_-(\mathcal E_+).
    \end{array}
    \ene
    Note that $ 0 \leq \theta_{+,0,-}(x,\mathcal E_+) \leq 1/4.$
The second type of solutions travels from $x=P/2$ to   $ x_+(\mathcal E_{+})$ 
and back to $x=P/2.$ The quantity  $ x_+(\mathcal E_{+})>0$ is the largest of the two solutions to  \eqref{1.87}.
Note that as $\varphi_0(x)$ is even,   $x_+(\mathcal E_{+})= -x_-(\mathcal E_{+}).$ Furthermore, the  period, or the travel time, of these solutions  is  $T_{+,0}(\mathcal E_+)/2.$  For these solutions we define the angle   as follows
    \beq\label{1.91}\begin{array}{c}
  \theta_{+,0,+}(x,\mathcal E_+):= \ds \frac{1}{T_{+,0}(\mathcal E_+)}  \int_{ P/2}^{x}
   \ds \frac{1}{-\sqrt{2(\mathcal E_+-\varphi_0(y))}}\, dy,\\ \mathcal E_+\in I_{+,0}, \qquad  x_+(\mathcal E_{+}) \leq x \leq P/2.
   \end{array}
    \ene
    Note that $ 0 \leq \theta_{+,0,+}(x,\mathcal E_+) \leq 1/4.$  Remark that as $\varphi_0(x)$ is even and $x_-(\mathcal E_-)=- x_+(\mathcal E_+),$ we have,  $ \theta_{+,0,-}(x,\mathcal E_+)=   \theta_{+,0,+}(-x,\mathcal E_+),$ for $  -P/2 \leq x \leq x_-(\mathcal E_+).$ 
  
     \subsubsection{The region with free orbits for the ions}
   We define,
   \beq\label{1.92}
   I_{+,1}:= ( \varphi_0(0), \infty).
   \ene
   In this region the solutions to  \eqref{1.83} with energy $\mathcal E_+\in  I_{+,1}$ travel from
   $x=-P/2$ to $x= P/2$ with positive velocity, or from    $x=P/2 $ to $x= -P/2$ with negative velocity. The period,
   or the travel time, in this case is the time that they take traveling from  $x=-P/2$ to $x= P/2$  or from     $x=P/2 $ to $x= -P/2.$  It is given by
   \beq\label{1.93}
   T_{+,1}(\mathcal E_+)=  \int_{-P/2}^{P/2}
   \frac{1}{\sqrt{2(\mathcal E_+-\varphi_0(x))}}\, dx,\qquad \mathcal E_+ \in I_{+,1}.
  \ene
   In this region we define the angle as follows,
  \beq\label{1.94}\begin{array}{c}
  \theta_{+,1}(x,\mathcal E_+):=  \ds \frac{1}{T_{+,1}(\mathcal E_+)}  \int_{-P/2}^{x}
  \ds \frac{1}{\sqrt{2(\mathcal E_+-\varphi_0(y))}}\, dy,\\[.4cm] \mathcal E_+\in I_{+,1},\qquad
    -P/2 \leq x \leq  P/2.
    \end{array}
    \ene
   Note that $ 0 \leq \theta_{+,1}(x,\mathcal E_+) \leq 1.$
\subsection{The transformation to energy-angle variables}
In this subsection we introduce the  transformation to energy-angle variables. We first introduce some notation.
Recall that we  parametrize  $\mathbb T_{P}$ with the coordinate $ x \in [-P/2, P/2],$ where we identify $x=-P/2$ with $x=P/2.$

We define,
\beq\label{1.9}
\Omega_{-,0}:=\{ (x,v)\in \mathbb T_{P} \times \mathbb R: \mathcal E_-(x,v) \in I_{-,0}\},
\ene
\beq\label{1.96}
\Omega_{-,1}^+:=\{ (x,v)\in \mathbb T_{P}  \times \mathbb R: \mathcal E_-(x,v) \in I_{-,1}, v > 0\},
\ene
\beq\label{1.97}
\Omega_{-,1}^-:=\{ (x,v)\in  \mathbb T_{P} \times \mathbb R: \mathcal E_-(x,v) \in I_{-,1}, v <0 \},
\ene
\beq\label{1.98}\begin{array}{c}
\Omega_{+,0}^{-}:=\{ (x,v)\in \mathbb T_{P}  \times \mathbb R: \mathcal E_+(x,v) \in I_{+,0}, -P/2 \leq x\leq    x_-(\mathcal E_+)\},
\end{array}
\ene
\beq\label{1.99}\begin{array}{c}
\Omega_{+,0}^{+}:=\{ (x,v)\in \mathbb T_{P}  \times \mathbb R: \mathcal E_+(x,v) \in I_{+,0}, x_+(\mathcal E_+) \leq x\leq P/2 \},
\end{array}
\ene
\beq\label{1.101}
\Omega_{+,1}^+:=\{ (x,v)\in \mathbb T_{P} \times \mathbb R: \mathcal E_+(x,v) \in I_{+,1}, v >0\},
\ene
\beq\label{1.100}
\Omega_{+,1}^-:=\{ (x,v)\in \mathbb T_{P} \times \mathbb R: \mathcal E_+(x,v) \in I_{+,1}, v <0\},
\ene
\beq\label{1.103}
\Omega_{-, \rm s}:= \{ (x,v) \in \mathbb T_{P} : \mathcal E_-(x,v)= -\varphi_0(-P/2)=- \varphi_0(P/2) \},
\ene
and
\beq
\label{1.102}
\Omega_{+, \rm s}:= \{ (x,v) \in \mathbb T_{P}\times \mathbb R: \mathcal E_+(x,v)= \varphi_0(0) \}.
\ene
The sets $\Omega_{\pm,\rm s}$ are the  separatrices  between trapped and free orbits. 
Remark that,
\beq\label{1.104}
\mathbb T_{P}\times \mathbb R= \Omega_{-,0}\cup \Omega_{-,1}^+\cup\Omega_{-,1}^- \cup \Omega_{-, \rm s}\cup (0, 0),
\ene
and
\beq\label{1.105}
\mathbb T_{P}\times \mathbb R=  \Omega_{+,0}^- \cup \Omega_{+,0}^+\cup \Omega_{+,1}^+\cup\Omega_{+,1}^-\cup \Omega_{+, \rm s}\cup
[(-P/2,0) \equiv (P/2,0)].
\ene
The point $(0,0)$  is the elliptic point for   the Hamiltonian  $\mathcal E_-(x,v)$ of Newton's equation \eqref{1.73}.  
Further, the points  $(\mp P/2,0)$ are hyperbolic points for  $\mathcal E_-(x,v).$ Similarly, $(\mp P/2,0)$ are elliptic points for the Hamiltonian    $\mathcal E_+(x,v)$ of Newton's equation \eqref{1.83} and $(0,0)$ is the hyperbolic point of $\mathcal E_+(x,v).$ At these points the transformation to energy-angle variables that we introduce below has singularities.

For the energy-angle variables we use the notation $\mu$ for the angle and $\mathcal E_\pm$ for the energy.
We do so for clarity because  the angle variable  $\mu(x,v)$ is defined  in different ways, in the trapped and free regions,  in terms of the angles defined in Subsections~\ref{anelec} and \ref{anions}. Recall that we denote by  $\mathbb S_1$  the unit circle, i.e.,  $[0,1]$ with $0$ and $1$ identified. 

We define the following transformations.
\begin{enumerate}
\item
Let $\mathcal M_{-,0}$ be the following bijection  from $\Omega_{-,0}$  onto  $I_{-,0}\times S_1$
\beq\label{1.106}\begin{array}{l}
\mathcal M_{-,0}(x, v)= (\mu, \mathcal E_- ),   \mu:=\theta_{-,0}(x,\mathcal E_-),       \qquad v \geq 0, \\[5pt]
 M_{-,0}(x, v)= (\mu, \mathcal E_-),\mu:=1- \theta_{-,0}(x,\mathcal E_-), \qquad v < 0,
 \end{array}
 \ene 
 where $\theta_{-,0}(x,\mathcal E_-)$ is defined in \eqref{1.79}. Note that
 \beq\label{1.107}\begin{array}{l}
1-  \theta_{-,0}(x,\mathcal E_-):= \frac{1}{2} +\ds\frac{1}{T_{-,0}(\mathcal E_-)}  \int_{x}^{x_+(\mathcal E_-)}
  \ds \frac{1}{\sqrt{2(\mathcal E_-+\varphi_0(y))}}\, dy, \\\mathcal E_-\in I_{-,0},\qquad
    x_-(\mathcal E_-)\leq x \leq x_+(\mathcal E_-).
    \end{array}
    \ene
    To construct the inverse of $\mathcal M_{-,0}$ let us  proceed as follows. For a fixed $ \mathcal E_-\in I_{-,0},$ let
\beq\label{1.108}
 t \in \mathbb R \to (X_{-,0}(t,\mathcal E_-) ,V_{-,0}(t,\mathcal E_-))
 \ene 
 be the unique solution to Newton's  equation \eqref{1.73} that satisfies the initial condition
\beq\label{1.109}
(X_{-,0}(0,\mathcal E_-),V_{-,0}(0,\mathcal E_-))= (x_-(\mathcal E_-), 0).
\ene
Recall that  $(X_{-,0}(t,\mathcal E_-) ,V_{-,0}(t,\mathcal E_-))$ is periodic  in $t$ with the period $T_{-,0}(\mathcal E_-)$ given in \eqref{1.77}.  For $(\mu, \mathcal E_-) \in  S_1\times I_{-,0}$
the quantity $(x,v)$ is given by,
\beq\label{1.110}\begin{array}{c}
(x(\mu,\mathcal E_-),v(\mu,\mathcal E_-))=\left(\mathcal M_{-,0}\right)^{-1}(\mu, \mathcal E_-):= \\ [.5cm](X_{-,0}(\mu T_{-,0}(\mathcal E_-),\mathcal E_-),V_{-,0}(\mu T_{-,0}(\mathcal E_-), \mathcal E_-)),
\end{array}
\ene
where for $\mu  \in [0,  1/2], v \geq 0,$  for  $\mu  \in [1/2, 1], v \leq 0,$ and $v=0,$ for $\mu= 1/2.$
Furthermore,
\beq\label{1.111}
dx\,  dv= T_{-,0}(\mathcal E_-)\, d\mu\, d\mathcal E_-.
\ene

\item
Let $\mathcal M_{-,1,+}$ be the following bijection  from $\Omega_{-,1}^+$  onto  $ I_{-,1}\times S_1$
\beq\label{1.112}\begin{array}{l}
\mathcal M_{-,1,+}(x, v)= (\mu, \mathcal E_- ), \mu:= \theta_{-,1}(x,\mathcal E_-),
 \end{array}
 \ene
 where $\theta_{-,1}(x,\mathcal E_-)$ is given in \eqref{1.82}.
To construct the inverse of $\mathcal M_{-,1,+}$ let us  proceed as follows. For a fixed $ \mathcal E_-\in I_{-,1},$ let
\beq\label{1.113}
 t \in \mathbb R \to (X_{-,1,+}(t,\mathcal E_-) ,V_{-,1,+}(t,\mathcal E_-))
 \ene 
 be the unique solution to Newton's equation \eqref{1.73} that satisfies the initial condition
\beq\label{1.114}
(X_{-,1,+}(0,\mathcal E_-),V_{-,1,+}(0,\mathcal E_-))= (-P/2, \sqrt{2(\mathcal E_-+\varphi_0(-P/2))}).
\ene
Recall that  $(X_{-,1,+}(t,\mathcal E_-) ,V_{-,1,+}(t,\mathcal E_-))$ travels from $x=-P/2$ to $x=P/2$  in the time  $T_{-,1}(\mathcal E_-)$ given in \eqref{1.81}.  For $(\mu, \mathcal E_-) \in  S_1\times I_{-,1}$
the quantity $(x,v)$ is given by
\beq\label{1.115}\begin{array}{c}
(x(\mu,\mathcal E_-),v(\mu,\mathcal E_-))= \left(\mathcal M_{-,1,+}\right)^{-1}(\mu, \mathcal E_-):=\\[.5cm](X_{-,1,+}(\mu T_{-,1}(\mathcal E_-),\mathcal E_-),V_{-,1,+}(\mu T_{-,1}(\mathcal E_-), \mathcal E_-)).
\end{array}
\ene
Furthermore,
\beq\label{1.116}
dx\,  dv= T_{-,1}(\mathcal E_-)\, d\mu\, d\mathcal E_-.
\ene

\item
Let $\mathcal M_{-,1,-}$ be the following bijection  from $\Omega_{-,1}^-$  onto  $I_{-,1}\times S_1$
\beq\label{1.117}\begin{array}{l}
\mathcal M_{-,1,-}(x, v)= (\mu, \mathcal E_- ), \mu:=1-\theta_{-,1}(x,\mathcal E_-),  
 \end{array}
 \ene
 where $\theta_{-,1}(x,\mathcal E_-)$ is given in \eqref{1.82}.
 Note that
 \beq\label{1.17b}
 1-\theta_{-,1}(x,\mathcal E_-)=\frac{1}{T_{-,1}(\mathcal E_-)} \int_{P/2}^x \frac{1}{- \sqrt{2(\mathcal E_-+\varphi_0(y))}}\, dy.
 \ene
 To construct the inverse of $\mathcal M_{-,1,-}$ let us  proceed as follows. For a fixed $ \mathcal E_-\in I_{-,1},$ let
\beq\label{1.119}
 t \in \mathbb R \to (X_{-,1,-}(t,\mathcal E_-) ,V_{-,1,-}(t,\mathcal E_-))
 \ene 
 be the unique solution to  Newton's equation \eqref{1.73} that satisfies the initial condition
\beq\label{1.120}
(X_{-,1,-}(0,\mathcal E_-),V_{-,1,-}(0,\mathcal E_-))= (P/2 , -\sqrt{2(\mathcal E_-+\varphi_0(P/2))}).
\ene
Recall that  $(X_{-,1,-}(t,\mathcal E_-) ,V_{-1,-}(t,\mathcal E_-))$ travels from $x=P/2$ to $x=-P/2$  in the time  $T_{-,1}(\mathcal E_-)$ given in \eqref{1.81}.  For $(\mu, \mathcal E_-) \in  S_1\times I_{-,1}$
the quantity $(x,v)$ is given by.
\beq\label{1.121}\begin{array}{c}
(x(\mu,\mathcal E_-),v(\mu,\mathcal E_-))= \left(\mathcal M_{-,1,-}\right)^{-1}(\mu, \mathcal E_-):=\\[.5cm](X_{-,1,-}(\mu T_{-,1}(\mathcal E_-),\mathcal E_-),V_{-,1,-}(\mu T_{-,1}(\mathcal E_-), \mathcal E_-)).
\end{array}
\ene
Furthermore,
\beq\label{1.122}
dx\,  dv= T_{-,1}(\mathcal E_-)\, d\mu\, d\mathcal E_-.
\ene
\item
Let $\mathcal M_{+,0}$ be the following bijection  from $\Omega_{+,0}^-\cup \Omega_{+,0}^+$  onto  $I_{+,0}\times S_1$
\beq\label{1.123}\begin{array}{l}
\mathcal M_{+,0}(x, v)= (\mu, \mathcal E_+ ), \mu:=\frac{1}{4}-\theta_{+,0,-}(x,\mathcal E_+),\qquad  (x,v) \in \Omega_{+,0}^-, v \leq 0, \\[5pt]
\mathcal  M_{+,0}(x, v)= (\mu, \mathcal E_+), \mu:=\frac{1}{4}+\theta_{+,0,+}(x,\mathcal E_+),(x,v) \in \Omega_{+,0}^+, \qquad v \leq  0,\\[5pt]
\mathcal  M_{+,0}(x, v)= (\mu, \mathcal E_+), \mu:=\frac{1}{2}+\frac{1}{4}-\theta_{+,0,+}(x,\mathcal E_+),(x,v) \in \Omega_{+,0}^+, \qquad v \geq 0, \\[5pt]
  \mathcal  M_{+,0}(x, v)= (\mu, \mathcal E_+), \mu:=\frac{3}{4}+\theta_{+,0,-}(x,\mathcal E_+),(x,v) \in \Omega_{+,0}^-, \qquad v \geq 0,
   \end{array}
   \ene
   where $\theta_{+,0,-}(x,\mathcal E_+)$ is defined in \eqref{1.89} and $\theta_{+,0,+}(x,\mathcal E_+)$
in \eqref{1.91}.  Note that
 \beq\label{1.124}
 \frac{1}{4}-\theta_{+,0,-}(x,\mathcal E_+)=\frac{1}{T_{+,0}(\mathcal E_+)} \int_{x_-(\mathcal E_+)}^x \frac{1}{-\sqrt{2(\mathcal E_+-\varphi_0(y))}}\, dy
 \ene
 and,
 \beq\label{1.125}
 \frac{1}{4}-\theta_{+,0,+}(x,\mathcal E_+)=\frac{1}{T_{+,0}(\mathcal E_+)} \int_{x_+(\mathcal E_+)}^x \frac{1}{\sqrt{2(\mathcal E_+-\varphi_0(y))}}\, dy.
 \ene
 To construct the inverse of $\mathcal M_{+,0}$ let us  proceed as follows. For a fixed $ \mathcal E_+\in I_{+,0},$ let
\beq\label{1.126}
 t \in \mathbb R \to (X_{+,0,-}(t,\mathcal E_+) ,V_{+,0,-}(t,\mathcal E_+))
 \ene
 be the unique solution to Newton's equation \eqref{1.83} that satisfies the initial condition
\beq\label{1.127}
(X_{+,0,-}(0,\mathcal E_+),V_{+,0,-}(0,\mathcal E_+))= (-P/2,  \sqrt{2(\mathcal E_+-\varphi_0(-P)/2)}).
\ene
Recall that  $(X_{+,0,-}(t,\mathcal E_+) ,V_{+,0,-}(t,\mathcal E_+))$ travels
from $ x=-P/2$  to 
$x_-(\mathcal E_+),$   
and back to $-P/2$, in the time  $T_{+,0}(\mathcal E_+)/2$ with $T_{+,0}(\mathcal E_+)$ given in \eqref{1.88}.  Further, for a fixed $ \mathcal E_+\in I_{+,0},$ let
\beq\label{1.127b}
 t \in \mathbb R \to (X_{+,0,+}(t,\mathcal E_+) ,V_{+,0,+}(t,\mathcal E_+))
 \ene
 be the unique solution to Newton's equation \eqref{1.83} that satisfies the initial condition
\beq\label{1.132}
(X_{+,0,+}(0,\mathcal E_+),V_{+,0,+}(0,\mathcal E_+))= (P/2, - \sqrt{2(\mathcal E_+-\varphi_0(P_{\delta/2}))}).
\ene
 Recall that  $(X_{+,0,+}(t,\mathcal E_+) ,V_{+,0,+}(t,\mathcal E_+))$ travels from $x=P/2$ to $x_+(\mathcal E_+),$   and back to $P/2$, in the time  $T_{+,0}(\mathcal E_+)/2$  with $T_{+,0}(\mathcal E_+)$  given in \eqref{1.88}.  
 For $(\mu, \mathcal E_+) \in  S_1\times I_{+,0}$
the quantity $(x,v)$ is given by,
\beq\label{1.132b}\begin{array}{c}
(x(\mu,\mathcal E_+),v(\mu,\mathcal E_+))= \left(\mathcal M_{+,0}\right)^{-1}(\mu, \mathcal E_+):=\\[2pt](X_{+,0,-}(\frac{1}{4}+\mu) T_{+,0}(\mathcal E_+),\mathcal E_+),V_{+,0,-}(\frac{1}{4}+\mu) T_{+,0}(\mathcal E_+), \mathcal E_+)), \qquad 0\leq \mu \leq \frac{1}{4},\\[2pt]
(x(\mu,\mathcal E_+),v(\mu,\mathcal E_+))= \left(\mathcal M_{+,0}\right)^{-1}(\mu, \mathcal E_+):=\\[2pt](X_{+,0,+}((\mu-\frac{1}{4}) T_{+,0}(\mathcal E_+),\mathcal E_+),V((\mu-\frac{1}{4}) T_{+,0}(\mathcal E_+), \mathcal E_+)), \qquad \frac{1}{4} \leq \mu \leq \frac{3}{4},\\[2pt]
(x(\mu,\mathcal E_+),v(\mu,\mathcal E_+))= \left(\mathcal M_{+,0}\right)^{-1}(\mu, \mathcal E_+):=\\[2pt](X_{+,0,-}((\mu- \frac{3}{4}) T_{+,0}(\mathcal E_+),\mathcal E_+),V_{+,0,-}((\mu-\frac{3}{4}) T_{+,0}(\mathcal E_+), \mathcal E_+)), \qquad \frac{3}{4} \leq \mu \leq 1,
\end{array}
\ene
where for $\mu  \in [0,  1/2], v \leq 0,$   and  for $ \mu \in [1/2, 1], v \geq 0.$  Recall that we have identified $- P/2$ with $ P/2.$ 
Furthermore,
\beq\label{1.128}
dx\,  dv= T_{+,0}(\mathcal E_+)\, d\mu\, d\mathcal E_+.
\ene

\item
Let $\mathcal M_{+,1,+}$ be the following bijection  from $\Omega_{+,1}^+$  onto  $I_{+,1}\times S_1$
\beq\label{1.135}\begin{array}{l}
\mathcal M_{+,1,+}(x, v)= (\mu, \mathcal E_+ ), \mu:= \theta_{+,1}(x,\mathcal E_+),
 \end{array}
 \ene
 where $\theta_{+,1}(x,\mathcal E_+)$ is given in \eqref{1.94}.
To construct the inverse of $\mathcal M_{+,1,+}$ let us  proceed as follows. For a fixed $ \mathcal E_+\in I_{+,1},$ let
\beq\label{1.136}
 t \in \mathbb R \to (X_{+,1,+}(t,\mathcal E_+) ,V_{+,1,+}(t,\mathcal E_+))
 \ene 
 be the unique solution to Newton's equation \eqref{1.83} that satisfies the initial condition
\beq\label{1.137}
(X_{+,1,+}(0,\mathcal E_+),V_{+,1,+}(0,\mathcal E_+))= (-P/2, \sqrt{2(\mathcal E_+-\varphi_0(-P/2))}).
\ene
Recall that  $(X_{+,1,+}(t,\mathcal E_+) ,V_{+,1,+}(t,\mathcal E_+))$ travels from $x=-P/2$ to $x=P/2$  in the time  $T_{+,1}(\mathcal E_+)$ given in \eqref{1.93}.  For $(\mu, \mathcal E_+) \in  S_1\times I_{+,1}$
the quantity $(x,v)$ is given by
\beq\label{1.138}\begin{array}{c}
(x(\mu,\mathcal E_+),v(\mu,\mathcal E_+))= \left(\mathcal M_{+,1,+}\right)^{-1}(\mu, \mathcal E_-):=\\[.5cm](X_{+,1,+}(\mu T_{+,1}(\mathcal E_+),\mathcal E_+),V_{+,1,+}(\mu T_{+,1}(\mathcal E_+), \mathcal E_+)).
\end{array}
\ene
Furthermore,
\beq\label{1.139}
dx\,  dv= T_{+,1}(\mathcal E_+)\, d\mu\, d\mathcal E_+.
\ene

\item
Let $\mathcal M_{+,1,-}$ be the following bijection  from $\Omega_{+,1}^-$  onto  $ I_{+,1}\times S_1$
\beq\label{1.140}\begin{array}{l}
\mathcal M_{+,1,-}(x, v)= (\mu, \mathcal E_+ ), \mu:=1-\theta_{+,1}(x,\mathcal E_+),  
 \end{array}
 \ene
 where $\theta_{+,1}(x,\mathcal E_+)$ is given in \eqref{1.94}.
 Note that
 \beq\label{1.141}
 1-\theta_{+,1}(x,\mathcal E_+)=\frac{1}{T_{+,1}(\mathcal E_+)} \int_{P/2}^x \frac{1}{- \sqrt{2(\mathcal E_+-\varphi_0(y))}}\, dy.
 \ene
 To construct the inverse of $\mathcal M_{+,1,-}$ let us  proceed as follows. For a fixed $ \mathcal E_+\in I_{+,1},$ let
\beq\label{1.142}
 t \in \mathbb R \to (X_{+,1,-}(t,\mathcal E_+) ,V_{+,1,-}(t,\mathcal E_+))
 \ene 
 be the unique solution to Newton's equation \eqref{1.83} that satisfies the initial condition
\beq\label{1.143}
(X_{+,1,-}(0,\mathcal E_+),V_{+,1,-}(0,\mathcal E_+))= (P/2, -\sqrt{2(\mathcal E_+-\varphi_0(P/2))}).
\ene
Recall that  $(X_{+,1,-}(t,\mathcal E_+) ,V_{+,1,-}(t,\mathcal E_+))$ travels from $x=P/2$ to $x=- P/2$  in the time  $T_{+,1}(\mathcal E_+)$ given in \eqref{1.93}.  For $(\mu, \mathcal E_+) \in  S_1\times I_{+,1}$
the quantity $(x,v)$ is given by
\beq\label{1.144}\begin{array}{c}
(x(\mu,\mathcal E_+),v(\mu,\mathcal E_+))= \left(\mathcal M_{+,1,-}\right)^{-1}(\mu, \mathcal E_-):=\\[.5cm](X_{+,1,-}(\mu T_{+,1}(\mathcal E_+),\mathcal E_+),V_{+,1,-}(\mu T_{+,1}(\mathcal E_+), \mathcal E_+)).
\end{array}
\ene
Furthermore,
\beq\label{1.145}
dx\,  dv= T_{+,1}(\mathcal E_+)\, d\mu\, d\mathcal E_+.
\ene

\end{enumerate}

\section{The unperturbed Vlasov-Amp\` ere operator}\label{unp}
 In this section we always assume that items (b) and (c) of Assumption\ref{assum0} hold.
We obtain a selfadjoint realization of the formal unperturbed Vlasov-Amp\`ere operator defined in \eqref{0.34}, and   we construct a spectral representation of the selfadjoint realization  in terms of {\it trace maps} that, as we mentioned in the introduction, plays a crucial role in the proof of the surjectivity of the generalized Fourier maps that we give onTheorem~\ref{theogf}.  For this purpose, first we introduce a unitary transformation from the Hilbert space $\mathcal H$ defined in \eqref{0.36}, \eqref{0.37} onto a Hilbert space $\hat{\mathcal H}$ that we define in terms of the energy-angle variables.
We define
\beq\label{5.1}
\hat{\mathcal H}:= \hat{\mathcal H}_-\oplus \hat{\mathcal H}_+\oplus L^2_0((-P/2,P/2)),
\ene
where,
\beq\label{5.2}\begin{array}{l}
\hat{\mathcal H}_\mp:=L^2(I_{\mp,0}\times (0,1), T_{\mp,0}(\mathcal E_\mp) \,d\mathcal E_\mp\, d\mu  )\oplus
L^2(I_{\mp,1}\times (0,1), T_{\mp,1}(\mathcal E_\mp)\, d\mathcal E_\mp\, d\mu  )\oplus\\ [0.2cm] L^2(I_{\mp,1}\times (0,1), T_{\mp,1}(\mathcal E_\mp)\, d\mathcal E_\mp \,d\mu  ).
\end{array}
\ene
Remark that,
\beq\label{5.3}\begin{array}{l}
\hat{\mathcal H}_\mp:=L^2(I_{\mp,0},L^2(S_1);T_{\mp,0}(\mathcal E_\mp) \,d\mathcal E_\mp )\oplus
L^2(I_{\mp,1}, L^2(S_1); T_{\mp,1}(\mathcal E_\mp)\, d\mathcal E_\mp  )\\ [0.2cm]\oplus L^2(I_{\mp,1}, L^2(S_1); T_{\mp,1}(\mathcal E_\mp)\, d\mathcal E_\mp ).
\end{array}
\ene
We denote the vectors in $\hat{\mathcal H}$ as follows,
\beq\label{5.4}
G=\begin{pmatrix} G_-(\mu, \mathcal E_-)\\G_+(\mu,\mathcal E_+)\\F(x)
\end{pmatrix},\ene
where
\beq\label{5.5}
G_\mp(\mu,\mathcal E_\mp)=\begin{pmatrix} g_{\mp,0}(\mu, \mathcal E_\mp)\\g_{\mp,1,+}(\mu, \mathcal E_\mp)\\g_{\mp,1,-}(\mu, \mathcal E_\mp)
\end{pmatrix}.
\ene
We define the  unitary operator $\mathbf V$  from $\mathcal H$ onto $\hat{\mathcal H},$
\beq\label{5.7}
G= \mathbf  V U:= \begin{pmatrix}  (\mathbf V u_-)(\mu,\mathcal E_-)\\  (\mathbf V u_+)(\mu,\mathcal E_+)\\F(x)
\end{pmatrix},\ene
where
\beq\label{5.8}
G_\mp(\mu,\mathcal E_\mp)=   (\mathbf V u_\mp)(\mu,\mathcal E_\mp):=   \begin{pmatrix} u_\mp\left(  (\mathcal M_{\mp,0})^{-1}(\mu, \mathcal E_\mp)\right)
\\ u_\mp\left(  (\mathcal M_{\mp,1,+})^{-1}(\mu, \mathcal E_\mp)\right)\\  u_\mp\left(  (\mathcal M_{\mp,1,-})^{-1}(\mu, \mathcal E_\mp)\right) \end{pmatrix}.
\ene  
We denote
\beq\label{5.9}
\Omega_-:=  \Omega_{-,0}\cup \Omega_{-,1}^+\cup \Omega_{-,1}^-,
\ene
and
\beq\label{5.9b}
\Omega_+:=  \Omega_{+,0}^-\cup  \Omega_{+,0}^+\cup \Omega_{+,1}^+\cup  \Omega_{+,1}^-.
\ene

It follows from a direct computation that for $U \in \mathcal H$ with $ u_\mp \in C^1_0(\Omega_\mp),$
\beq\label{5.10}
\mathbf V \mathcal A_0 U= {\mathcal A}_{0}^{(0)} \mathbf V U,
\ene
where
\beq\label{5.11}
{\mathcal A}_{0}^{(0)}:= \begin{bmatrix} {\mathcal A}_{0,-}^{(0)}&0&0\\
0& {\mathcal A}_{0,+}^{(0)}&0\\
0&0&0
\end{bmatrix},
\ene
with 
\beq\label{5.12}
{\mathcal A}_{0,\mp}^{(0)}:= \begin{bmatrix}\ds \frac{-i\partial_\mu}{T_{\mp,0}(\mathcal E_\mp)}&0&0\\
 0&\ds\frac{-i\partial_\mu}{T_{\mp,1}(\mathcal E_{\mp})} &0\\
 0&0&\ds\frac{-i\partial_\mu}{T_{\mp,1}(\mathcal E_\mp)}
 \end{bmatrix}.
 \ene
We  denote by  ${a}_0$ the   selfadjoint operator in $L^2(S_1)$ that  acts as
\beq\label{5.14}
a_0 f(\mu):= -i \frac{d}{d_\mu} f(\mu),
\ene
with the domain,
\beq\label{5.15}
D[a_0]:= H^{1,\rm p}((0,1)).
\ene
We recall that we parametrize  $S_1$ with the coordinate $ \mu \in [0, 1],$ where we identify $\mu=0$ with $\mu =1.$ The operator $a_0$ is a selfadjoint extension  of the  symmetric differential operator  $-i \partial_\mu$  with domain 
$\{ f\in C^1([0,1]): f(0)=f(1)=0\}.$  We  denote by $\hat{\mathcal A}_0$ the selfadjoint extension of the formal differential operator $\mathcal A^{(0)}_0$   that acts as
\beq\label{5.16}
\hat{\mathcal A}_0G:= \begin{bmatrix} \hat{\mathcal A}_{0,-}&0&0\\
0& \hat{\mathcal A}_{0,+}&0\\
0&0&0
\end{bmatrix}G
\ene
where
\beq\label{5.17}
\hat{\mathcal A}_{0,\mp}:= \ds\begin{bmatrix} \ds\frac{a_0}{T_{\mp,0}(\mathcal E_\mp)}&0&0\\
 0&\ds\frac{a_0}{T_{\mp,1}(\mathcal E_\mp)} &0\\
 0&0&\ds\frac{a_0}{T_{\mp ,1}(\mathcal E_\mp)}
 \end{bmatrix}.
 \ene
The domain of $\hat{\mathcal A}_0$ is given by, 
\beq\label{5.19}
D(\hat{A}_0):=\{  G\in \hat{\mathcal H}: G_-\in D[\hat{\mathcal A}_{0,-}],  G_+\in D[\hat{\mathcal A}_{0,+}], F\in L^2_0((0,1)) \},
\ene
where,
\beq\label{5.20}\begin{array}{l}
D[\hat{\mathcal A}_{0,\mp}]:=\left\{ \right. G_\mp \in \hat{\mathcal H}_\mp: g_{\mp,0}(\cdot, \mathcal E_\mp) \in H^{1, \rm p}((0,1)), \, {\rm  a.e.}\, \mathcal E_\mp\in I_{\mp,0};  g_{\mp,1,+}(\cdot, \mathcal E_\mp) \in  H^{1, \rm p}((0,1)),\, \\[0.1cm]{ \rm a.e.} \, \mathcal E_\mp\in I_{\mp,1};  g_{\mp,1,-}(\cdot, \mathcal E_\mp) \in  H^{1, \rm p}((0,1)),\, {\rm  a.e.}\, \mathcal E_\mp\in I_{\mp,1}, \, {\rm and} 
\\[0.1cm]
\int_{ I_{\mp,0}} \, \|g_{\mp,0}(\cdot, \mathcal E_\mp)\|_{ H^{1, \rm p}((0,1))}^2  {T^{-1}_{\mp,0}(\mathcal E_\mp)}\, d\mathcal E_\mp  < \infty,\\[.1cm]
\int_{ I_{\mp,1}} \,  \| g_{\mp,1,+}(\cdot, \mathcal E_\mp)\|_{ H^{1, \rm p}((0,1))}^2   {T^{-1}_{\mp ,1}(\mathcal E_\mp)}\, d\mathcal E_\mp  < \infty,\\[0.1cm]
\int_{ I_{\mp,1}} \, \|   g_{\mp,1,-}(\cdot, \mathcal E_\mp)\|_{ H^{1, \rm p}((0,1))}^2  {T^{-1}_{\mp,1}(\mathcal E_\mp)}\, d\mathcal E_\mp< \infty \left\} \right. .
\end{array}
\ene
We define a selfadjoint extension of the unperturbed  Vlasov-Amp\` ere operator \eqref{0.34}, that we also denote by $\mathcal A_0,$ by a unitary transformation of $\hat{\mathcal A}_0$,
\beq\label{5.22}
\mathcal A_0:= \mathbf V^{-1} \hat{\mathcal A}_0 \mathbf V.
\ene   
In order to characterize the domain of $\mathcal A_0$  we define the action of the operators
$$
\mathcal D_\pm:=-i( v \partial_x\pm \partial_x \varphi_0 \partial_v)
$$ 
in weak sense.
\begin{definition}  {\rm We say that  $f \in L^2( \mathbb T_{P}\times \mathbb R)$ belongs to $D[\mathcal D_\pm ],$  if there is a function $g_\pm   \in L^2(\mathbb T_{P}\times \mathbb R)$ such that,
\beq\label{5.23}\begin{array}{c}
\ds \left (f, \mathcal D_\pm u\right)_{\ds L^2(\mathbb T_{P} \times R)}=\ds \left(g_\pm,u\right)_{\ds L^2(\mathbb T_{P}\times \mathbb R)}, \qquad u \in  C^1_0(\Omega_\pm)\end{array}.
\ene
Further,
\beq\label{5.25}
\mathcal D_\pm f:=g_\pm.
\ene
}
\end{definition}
In the following theorem we characterize the domain of $\mathcal A_0,$  the kernel of
 $\hat{\mathcal A_0},$ and the kernel of $\mathcal A_0.$

\begin{theorem}\label{thoesefad}
Suppose that items (b) and (c) of Assumption~\ref{assum0} hold. Then.
\begin{enumerate}
\item[\rm (a)]The unperturbed Vlasov-Amp\`ere operator $\mathcal A_0$ defined in \eqref{5.22} is selfadjoint and its domain is given by,
\beq\label{5.26}
D[\mathcal A_0]=\{ U \in \mathcal H: U= (\mathbf V)^{-1} G, G \in D[\hat{\mathcal A}_0]\}.
\ene
\item[\rm (b)]
\beq\label{5.27}
\mathcal A_0 \,U=  \mathcal DU, \, \hbox{\rm where}\, \mathcal D:= \begin{bmatrix} \mathcal D_-&0&0\\
 0&  \mathcal D_+&0\\ 0&0&0\end{bmatrix}, \qquad U \in D[\mathcal A_0].
 \ene
 \item[\rm (c)]
 \beq\label{5.28}\hbox{\rm Ker}[\hat{\mathcal A}_0]= \{ G \in D[\hat{\mathcal A}_0]: G_\mp \, \hbox{\rm are independent of} \, \mu \}.
 \ene
 \item[\rm(d)]
 \beq\label{5.29}
 \hbox{\rm Ker}[{\mathcal A}_0]= \mathbf V^{-1} \hbox{\rm Ker}[\hat{\mathcal A}_0].
 \ene
 \end{enumerate}
\end{theorem}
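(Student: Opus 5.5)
Since $\mathbf V$ is unitary from $\mathcal H$ onto $\hat{\mathcal H}$ (the measure identities \eqref{1.111}, \eqref{1.116}, \eqref{1.122}, \eqref{1.128}, \eqref{1.139}, \eqref{1.145} give isometry, and the decompositions \eqref{1.104}, \eqref{1.105} show that the separatrices and the elliptic/hyperbolic points have Lebesgue measure zero, giving surjectivity), the natural strategy is to prove everything for $\hat{\mathcal A}_0$ and transport it by $\mathbf V$.

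\textbf{Item (a).} First show that $\hat{\mathcal A}_0$ is selfadjoint. The operator $a_0$ with domain $H^{1,\rm p}((0,1))$ is selfadjoint on $L^2(S_1)$, with spectral representation given by the Fourier series in $e^{2\pi i n\mu}$. Hence each diagonal entry of \eqref{5.17} is a direct integral over $\mathcal E_\mp$ of the selfadjoint operators $a_0/T_{\mp,j}(\mathcal E_\mp)$, i.e. a multiplication operator by $2\pi n/T_{\mp,j}(\mathcal E_\mp)$ after Fourier transform in $\mu$. A real multiplication operator on its maximal domain is selfadjoint, and that maximal domain coincides with \eqref{5.20}. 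The weight $T^{-1}$ appears in \eqref{5.20} because the norm carries $T\,d\mathcal E$ and the operator divides by $T$, so $\|T^{-1}a_0 g\|^2 T = \|a_0 g\|^2 T^{-1}$. Measurability in $\mathcal E$ is handled by viewing $\hat{\mathcal H}_\mp$ as Bochner $L^2$ spaces as in \eqref{5.3}. Adding the zero operator on $L^2_0$ preserves selfadjointness. Unitary equivalence then gives selfadjointness of $\mathcal A_0$ and \eqref{5.26} by definition.

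\textbf{Item (b).} Take $U\in D[\mathcal A_0]$ and $u\in C^1_0(\Omega_\mp)$. Using \eqref{5.10} for $u$ and unitarity of $\mathbf V$,
$$(u_\mp,\mathcal D_\mp u)=(\mathbf V u_\mp,\mathcal A^{(0)}_{0,\mp}\mathbf V u)_{\hat{\mathcal H}_\mp}.$$
Note that $\mathbf V u$ is $C^1$ and periodic in $\mu$ on each chart, with compact support in $\mathcal E$ away from the singular energies. Integrating by parts in $\mu$ for a.e.\ $\mathcal E$ (valid since $\mathbf V u_\mp(\cdot,\mathcal E)\in H^{1,\rm p}$ and boundary terms cancel by periodicity) yields $(\hat{\mathcal A}_{0,\mp}\mathbf V u_\mp,\mathbf V u)$. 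Transforming back, this equals $((\mathcal A_0U)_\mp,u)$. This is exactly \eqref{5.23} with $g_\mp=(\mathcal A_0 U)_\mp$, so $\mathcal A_0U=\mathcal DU$.

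The main obstacle is verifying \eqref{5.10} and the $C^1$ regularity of $\mathbf V u$ near the chart boundaries. One must check that the angle coordinates glue smoothly across $\mu=0\equiv1$, and for the ions across the identification $x=\mp P/2$ and the quarter points in \eqref{1.123}. Here I would use that $u$ has compact support in $\Omega_\mp$, so it vanishes near the separatrices and the equilibria where the transformation is singular. The flow $(X,V)(\mu T,\mathcal E)$ is $C^1$ in $(\mu,\mathcal E)$ there because $T$ is $C^1$ away from the singular energies (Appendix results), and $\partial_\mu$ along the flow equals $T\,\mathcal D$ by the chain rule.

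\textbf{Items (c) and (d).} In the Fourier representation, $\hat{\mathcal A}_{0,\mp}G=0$ forces $n\,\hat g_n(\mathcal E)=0$ a.e., since $T>0$ and finite on the open intervals. Thus only the $n=0$ mode survives, i.e. $G_\mp$ is independent of $\mu$. The $F$ component is arbitrary in $L^2_0$, which is consistent with \eqref{5.28} since it places no condition on $F$. Finally, \eqref{5.29} is immediate from $\mathcal A_0=\mathbf V^{-1}\hat{\mathcal A}_0\mathbf V$.
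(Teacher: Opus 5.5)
Your proposal is correct and follows the paper's own route. Item (a) comes from unitary equivalence through \eqref{5.22}. Item (b) comes from the duality computation $(U,\mathcal D L)_{\mathcal H}=(G,\hat{\mathcal A}_0\mathbf V L)_{\hat{\mathcal H}}=(\hat{\mathcal A}_0 G,\mathbf V L)_{\hat{\mathcal H}}=(\mathcal A_0 U,L)_{\mathcal H}$ based on \eqref{5.10}; your integration by parts in $\mu$ is exactly the middle symmetry step. Items (c) and (d) come from the Fourier representation in $\mu$ and \eqref{5.22}. You also fill in the selfadjointness of $\hat{\mathcal A}_0$ and the regularity of $\mathbf V L$, which the paper takes for granted.

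One small caveat on (a). The maximal domain of the multiplication operator coincides with \eqref{5.20} only if the $T^{-1}$-weighted condition there is read as $\int\|\partial_\mu g(\cdot,\mathcal E)\|^2_{L^2(S_1)}\,T^{-1}(\mathcal E)\,d\mathcal E<\infty$, as your own computation $\|T^{-1}a_0 g\|^2T=\|a_0 g\|^2T^{-1}$ suggests. The reason is that $T_{\mp,1}(\mathcal E)\to 0$ as $\mathcal E\to\infty$. Weighting the full $H^{1,\rm p}$ norm by $T^{-1}$ would therefore impose an extra decay condition on the $\mu$-average on $I_{\mp,1}$, and the resulting operator would not even be closed.
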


\begin{proof}Item (a) follows from \eqref{5.22}. Let us prove (b). Suppose that $ U \in D[\mathcal A_0].$ Hence, by \eqref{5.26} $U= \mathbf V^{-1} G$ for some
$G\in D[\hat{\mathcal A}_0].$ Then, for all $ L=(l_-, l_+,0)$ with $l_\mp \in C^1_0(\Omega_\mp),$ by \eqref{5.10},
\beq\label{5.29b}
\left( U, \mathcal D L \right)_{\mathcal H}= \left( G, \hat{\mathcal A}_0 \mathbf V L \right)_{\hat{\mathcal H}}=
\left( \hat{\mathcal A}_0 G,  \mathbf V L\right)_{\hat{\mathcal H}}= \left(  \mathcal A_0 U, L\right)_{\mathcal H}.
\ene
From \eqref{5.29b} it follows that  $U\in D[\mathcal D]$ and $ \mathcal A_0 U= \mathcal D U.$ This proves (b). Item (c) follows from \eqref{5.14}, \eqref{5.16}, and \eqref{5.17}. Finally, (d) follows from (c) and  \eqref{5.22}.

\end{proof}

We denote by $\mathbf  F_1$ the Fourier series  for periodic functions defined on $\mathbb S_1,$
\beq\label{5.82}
\mathbf  F_1 f= \{  f_n \}_{n \in \mathbb Z},  
\ene
where
\beq\label{5.83}
f_n= (\mathbf  F_1f)_n := \frac{1}{\sqrt{2\pi}} \int_{[0,1]}\, e^{-i2\pi n \mu}\, f(\mu)\, d\mu.
\ene
As is well known, $\mathbf  F_1$ is unitary from $L^2(\mathbb S_1)$ onto $l^2,$ the standard Hilbert
 of all square summable complex valued sequences $\{f_n\}_{ n \in  \mathcal Z}$ . The inverse is given by,
  \beq\label{5.84}
 \mathbf  F_1^{-1} \{f_n\}= \frac{1}{\sqrt{2\pi}}\sum_{n \in \mathbb Z} e^{i2\pi n x} f_n, \qquad \{f_n\}\in l^2.
 \ene
 Let $ \mathcal H_{\mathbf  F}$ be the following Hilbert space,
 \beq\label{5.84b}\begin{array}{l}
 \mathcal H_{\mathbf  F}:= L^2(I_{-,0}, l^2; T_{-,0}(\mathcal E_-)  d\mathcal E_-)\oplus L^2(I_{-,1}, l^2; T_{-,1}(\mathcal E_-) d\mathcal E_-)\oplus \\L^2(I_{-,1}, l^2;T_{-,1}(\mathcal E_-) d\mathcal E_-)\oplus L^2(I_{+,0}, l^2; T_{+,0}(\mathcal E_+) d\mathcal E_+)\oplus L^2(I_{+,1}, l^2; T_{+,1}(\mathcal E_+) d\mathcal E_+)\oplus \\L^2(I_{+,1}, l^2;  T_{+,1}(\mathcal E_+)d\mathcal E_+)\oplus L^2_0(-P/2, P/2).
 \end{array}
\ene
We designate the functions  $ B \in \mathcal H_{\mathbf  F}$ as follows,
\beq\label{5.84c}
 B=\begin{pmatrix}B_-\\ B_+\\ F\end{pmatrix}
 \ene
 where,
 \beq\label{5.84d}
 B_\mp= \begin{pmatrix}\{ b_{\mp,0,n}\}\\\{b_{\mp,1,+,n}\}\\ \{b_{\mp,1,-,n}\}
 \end{pmatrix}.
 \ene
 We denote by $\mathbf F$ the following unitary operator from $\hat{\mathcal H}$ onto $\mathcal H_{\rm F},$
 \beq\label{5.84f}
 \mathbf F G=\begin{pmatrix} (\mathbf F G)_-\\(\mathbf F G)_+\\ F\end{pmatrix},
 \ene
 where
 \beq\label{5.84h}
 (\mathbf F G)_\mp:= \begin{pmatrix}\mathbf  F_1 g_{\mp,0}\\ \mathbf  F_1 g_{\mp,1, +}\\ \mathbf  F_1 g_{\mp,1,-}
 \end{pmatrix}.
 \ene
Under $\mathbf F$ the operator $ \hat{\mathcal A}_0$ transforms as follows,
\beq\label{5.84i}
\hat{\mathcal A}_{0, {\mathbf  F}}= \mathbf F \hat{\mathcal A}_{0} \mathbf F^{-1},
\ene
where,
\beq\label{5.84j}
\hat{\mathcal A}_{0,{\mathbf  F}}:= \begin{bmatrix} \hat{\mathcal A}_{0,{\mathbf  F},-}&0&0\\
0& \hat{\mathcal A}_{0,{\mathbf  F},+}&0\\
0&0&0
\end{bmatrix},
\ene
with
$$
\hat{\mathcal A}_{0,{\mathbf  F}, \mp}= \oplus_{n \in \mathbb Z} \hat{\mathcal A}_{0,{\mathbf  F}, \mp,n},
$$
where
\beq\label{5.84k}
\hat{\mathcal A}_{0,{\mathbf  F},\mp,n}:= \begin{bmatrix}\ds \frac{2 \pi n}{T_{\mp,0}(\mathcal E_\mp)}&0&0\\
 0&\ds\frac{2 \pi n}{T_{\mp,1}(\mathcal E_\mp)} &0\\
 0&0&\ds\frac{2\pi n}{T_{\mp,1}(\mathcal E_\mp)}
 \end{bmatrix}.
 \ene
In order to construct a spectral representation for $\mathcal A_0$  we introduce the following notation. We denote by $\mathbb Z_0$ the nonzero integers, by $\mathcal N$ the positive integers,  and we define,
    \beq\label{5.49}
  \beta_{\mp,0,n}(\mathcal E_{\mp}):= \frac{(2\pi n)}{ T_{\mp,0}(\mathcal E_\mp)},\qquad  \mathcal E_{\mp}\in I_{\mp,0}, n \in \mathbb Z_0,
\ene
  \beq\label{5.50}
 \beta_{-,0,n,\rm max}:= \frac{2\pi n}{T_{-,0}(-\varphi_0(0))}, \beta_{+,0,n,\rm max}:= \frac{2\pi n}{T_{+,0}(\varphi_0(P/2))},
 n \in \mathbb N.
 \ene
 We remark that by Proposition~\ref{proper}   (c) and (d) $T_{-,0}(-\varphi_0(0))= 2\pi /\sqrt{-\varphi_0''(0)}$ is the minimum of $T_{-,0}(\mathcal E_-)$ for $\mathcal E_- \in I_{-,0}.$ Further, by  Proposition~\ref{proper}   (h) and (i) $T_{+,0}(\varphi_0(P/2))= 2\pi /\sqrt{\varphi_0''(P/2)}$ is the minimum of $T_{+,0}(\mathcal E_+)$ for $\mathcal E_+ \in I_{+,0}.$
   Since by Proposition~\ref{proper} (d) and (i)  $T_{\mp,0}(\mathcal E_\mp)$ is strictly  increasing, the function $\beta_{\mp,0,n}(\mathcal E_\mp)$ is invertible. 
   We denote  the  inverse function by (see Proposition~\ref{proper} (a) and (f))
   \beq\label{5.51}\begin{array}{l}
   \mathcal E_{\mp,0,n}(\beta), n \in \mathbb Z_0, \, {\rm where} \,\beta \in (0, \beta_{\mp,0,n,\rm max})\, {\rm for} \, n>0, \\ {\rm and } \,\beta \in ( -\beta_{\mp,0,-n,\rm max},0)\,\, {\rm for}\, n <0.
  \end{array}
   \ene
    We have that,
  \beq\label{5.52}
  \mathcal E_{\mp,0,n}(\beta_{\mp,0,n}(\mathcal E_\mp))= \mathcal E_{\mp}, \qquad \mathcal E_\mp\in I_{\mp,0},n \in \mathbb Z_0. 
  \ene
    By the inverse function theorem,
   \beq\label{5.53}
 p_{\mp,0,n}(\beta):= \mathcal E_{\mp,0,n}'(\beta)= -\frac{T_{\mp,0}^2(\mathcal E_{\mp,0,n}(\beta))}{2 \pi n T'(\mathcal E_{\mp,0,n}(\beta))}.
  \ene
Moreover, we designate 
\beq\label{5.54}
  \beta_{\mp,1,n}(\mathcal E_\mp):= \frac{(2\pi n)}{ T_{\mp,1}(\mathcal E_\mp)},\qquad   \mathcal E_{\mp}\in I_{\mp,1}, n \in \mathbb Z_0.
\ene
By Proposition ~\ref{proper} (d) and (j), $ \beta_{\mp,1,n}(\mathcal E_\mp)$ is invertible.    We denote  the  inverse function by (see Proposition~\ref{proper} (b) and (g)),

 \beq\label{5.55}
   \mathcal E_{\mp,1,n}(\beta), n \in \mathbb Z_0, \, {\rm where} \, \beta \in (0, \infty)\, {\rm for} \, n>0, \,{\rm and}  \,\beta \in ( -\infty,0)\, {\rm for}\, n <0.
   \ene
 We have that,
 \beq\label{5.56}
  \mathcal E_{\mp,1,n}(\beta_{\mp,1,n}(\mathcal E_\mp))= \mathcal E_{\mp}, \qquad \mathcal E_\mp \in I_{\mp,1}, n \in \mathbb Z_0.
  \ene
  Moreover,
   \beq\label{5.57}
 p_{\mp,1,n}(\beta):= \mathcal E_{\mp,1,n}'(\beta)= -\frac{T_{\mp,1}^2(\mathcal E_{\mp,1,n}(\beta))}{2 \pi n T'(\mathcal E_{\mp,1,n}(\beta))}.
  \ene
We introduce Hilbert spaces that we use to construct  a spectral representation of $\mathcal A_0.$
 \beq\label{5.66}
 \mathcal Q_{\mp,0,0}= L^2(I_{\mp,0},T_{\mp,0}(\mathcal E_\mp)   d\mathcal E_\mp),
 \ene
 \beq\label{5.67}
 Q_{\mp,0,n}:= L^2( (0, \beta_{\mp,n, {\rm max}}), d\beta), \qquad n \in \mathbb N,
 \ene
  \beq\label{5.68}
 Q_{\mp,0,n}:= L^2( (-\beta_{\mp,-n, {\rm max}},0), d\beta) \qquad -n \in \mathbb N,
 \ene
  \beq\label{5.69}
 \mathcal Q_{\mp,1,0}= L^2( I_{\mp,1}, T_{\mp,1}(\mathcal E_\mp) d\mathcal E_\mp),
 \ene
 \beq\label{5.70}
Q_{\mp,1,n}:= L^2( (0,\infty), d\beta), \qquad n \in \mathbb N,
 \ene
  \beq\label{5.71}
 Q_{\mp,1,n}:= L^2( (-\infty,0), d\beta), \qquad   - n \in \mathbb N.
 \ene

 Further, we designate,
 \beq\label{5.78}\begin{array}{c}
\tilde{\mathcal H}:= \oplus_{n \in \mathbb Z}\left [ Q_{-,0,n}\oplus  Q_{-,1,n}\oplus Q_{-1,n} \right] \oplus_{n \in \mathbb Z}\\
 \left [  Q_{+,0,n}\oplus  Q_{+,1,n}\oplus Q_{+,1,n} \right]  \oplus L^2_0((-P/2,P/2)).\end{array}
\ene
 We denote the functions in $\tilde{\mathcal H}$ as follows,
\beq\label{5.79}
\begin{pmatrix} \{K_{-,n}\}\\\{ K_{+,n}\}\\ F\end{pmatrix}
\ene
where
\beq\label{5.80}
\{K_{\mp,n}\}= \left\{\begin{array}{c} k_{\mp,0,n}\\k_{\mp,1,+,n}\\k_{\mp,1,-,n}\end{array}
\right\}.
\ene
Let $\tilde{\mathbf V} $ be the following unitary operator from ${\mathcal H}_{\mathbf F}$ onto $\tilde{\mathcal H}$ given by,
\beq\label{5.81}
   \tilde{\mathbf V}B= \begin{pmatrix} (\tilde{\mathbf V}B)_-\\   (\tilde{\mathbf V}B)_+    \\F
     \end{pmatrix}
     \ene
     where,
    \beq\label{5.86}
     (\tilde{\mathbf V} B)_{\mp,0}:= \begin{pmatrix} b_{\mp,0,0}\\
    b_{\mp,1,+,0}\\ b_{\mp,1,-,0}
   \end{pmatrix},
   \ene
  
  \beq\label{5.87}
     (\tilde{\mathbf V} B)_{\mp,n}:= \begin{pmatrix}   \sqrt{ |p_{\mp,0,n}(\beta)| \, T_{\mp,0}(\mathcal E_{\mp,0,n}(\beta)) }   \,   b_{\mp,0,n}(\mathcal E_{\mp,0,n}(\beta))\\[.5cm]
       \sqrt{ |p_{\mp,1,n}(\beta)|  \,T_{\mp,1}(\mathcal E_{\mp,1,n}(\beta)) } \,    b_{\mp,1,+,n}(\mathcal E_{\mp,1,n}(\beta))\\[.5cm]  \sqrt{ |p_{\mp,1,n}(\beta)|  \,T_{\mp,1}(\mathcal E_{\mp,1,n}(\beta)) }   \,b_{\mp,1,-,n}(\mathcal E_{\mp,1,n}(\beta))
   \end{pmatrix}, \qquad  n \in \mathbb Z_0.
   \ene
  We define,
  \beq\label{5.89}
  \mathcal F:= \hat{\mathbf V} \mathbf F.
  \ene
  The operator $\mathcal F$ is unitary from $\hat{\mathcal H}$ onto $ \tilde{\mathcal H}.$ 
 Further, under ${\mathcal F}$ the operator $\hat{\mathcal A}_0$ transform as follows
   \beq\label{5.90}
   \tilde{\mathcal A}_0:=  {\mathcal F} \hat{\mathcal A}_0 {\mathcal F}^{-1}.
   \ene
   For  $K=(\{K_{-,n}\},  \{K_{+,n}\},  F)^T\in D[\tilde{\mathcal A}_0]$ and   $ \tilde{\mathcal A}_0 K=( \tilde{\mathcal A}_0 K)_{-,n},  (\tilde{\mathcal A}_0 K)_{+,n}, H)^T$  we have,

   \beq\label{5.91}\begin{array}{c}
   ( \tilde{\mathcal A}_0 K)_{\mp,0,0}=
   ( \tilde{\mathcal A}_0 K)_{\mp,1,+,0}=
   ( \tilde{\mathcal A}_0 K)_{\mp,1,-,0}=0,\\
    ( \tilde{\mathcal A}_0 K)_{\mp,0,n}= \beta\, k_{\mp,0,n},  \qquad n \in \mathcal Z_0,  \\
( \tilde{\mathcal A}_0 K)_{\mp,1,+,n}= \beta\, k_{\mp,1,+,n},  \qquad n \in \mathcal Z_0,\\
 (\tilde{\mathcal A}_0 K)_{\mp,1,-,n}= \beta\, k_{\mp,1,-,n},  \qquad n \in \mathcal Z_0,\\
 H\equiv 0.
\end{array}
\ene
For an operator $B$ in a Hilbert space we denote by $\sigma(B)$ its spectrum \cite{kato}. If $B$ is selfadjoint we 
designate  $\sigma_{\rm ac}(B)$ its absolutely continuous spectrum \cite{kato} and  by $\sigma_{\rm ess}({B})$ its essential spectrum, namely the complement in the spectrum of $B$ of the set of  the isolated eigenvalues of $B$ with finite multiplicity.

   In the following theorem we characterize the spectrum of $\mathbf A_0.$
   \begin{theorem}\label{sp0} Suppose that items (b) and (c) of Assumption~\ref{assum0} hold.
   Then, the  point spectrum of $\mathcal A_0,$ $\hat{\mathcal A}_0,$ and $\tilde{\mathcal A}_0,$ consists of the infinite dimensional eigenvalue zero and they have no singular continuous spectrum. Further,
   $\sigma_{\rm ess}(\mathcal A_0)= \sigma_{\rm ac}(\mathcal A_0)= \sigma_{\rm ess}(\hat{\mathcal A}_0)= \sigma_{\rm ac}(\hat{\mathcal A}_0)=\sigma_{\rm ess}(\tilde{\mathcal A}_0)= \sigma_{\rm ac}(\tilde{\mathcal A}_0)=\mathbb R .$ 
     \end{theorem}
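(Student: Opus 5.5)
Since $\mathcal A_0=\mathbf V^{-1}\hat{\mathcal A}_0\mathbf V$ and $\tilde{\mathcal A}_0=\mathcal F\hat{\mathcal A}_0\mathcal F^{-1}$ with $\mathbf V$ and $\mathcal F$ unitary, the three operators are unitarily equivalent. Point spectrum, singular continuous spectrum, absolutely continuous spectrum and essential spectrum are all invariant under unitary equivalence, so it suffices to work with $\tilde{\mathcal A}_0$. By \eqref{5.91}, $\tilde{\mathcal A}_0$ is an orthogonal direct sum of two pieces. The first is the zero operator on
\[
\mathcal K_0:=\oplus_{\mp}\left[Q_{\mp,0,0}\oplus Q_{\mp,1,0}\oplus Q_{\mp,1,0}\right]\oplus L^2_0((-P/2,P/2)).
\]
The second consists of the multiplication operators by the variable $\beta$ on the spaces $Q_{\mp,0,n}$ and on the two copies of $Q_{\mp,1,n}$, $n\in\mathbb Z_0$. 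Each of these is an $L^2$ space of an interval with Lebesgue measure $d\beta$.

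First I would use that multiplication by $\beta$ on $L^2(J,d\beta)$, for an interval $J$, has purely absolutely continuous spectrum equal to $\overline J$. Indeed, its spectral measure for $f$ is $|f(\beta)|^2d\beta$, which is absolutely continuous. This gives three facts about the part of $\tilde{\mathcal A}_0$ on $\mathcal K_0^\perp$: it has no eigenvalues, its singular subspace is trivial, and its spectral measures are all absolutely continuous. Consequently the singular continuous spectrum is empty. The point spectrum is exactly $\{0\}$, with eigenspace $\mathcal K_0$, which is infinite dimensional because it contains $L^2(I_{\mp,0},T_{\mp,0}\,d\mathcal E_\mp)$. This is consistent with Theorem~\ref{thoesefad}~(c),(d).

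Next I would identify the absolutely continuous spectrum. For $n\in\mathbb N$ the spaces $Q_{\mp,1,n}=L^2((0,\infty),d\beta)$ carry multiplication by $\beta$, whose spectrum is $[0,\infty)$. For $-n\in\mathbb N$ the spaces $Q_{\mp,1,n}=L^2((-\infty,0),d\beta)$ give $(-\infty,0]$. The ranges $(0,\infty)$ and $(-\infty,0)$ of $\beta_{\mp,1,n}$ are exactly what Proposition~\ref{proper}~(b),(g) provide in \eqref{5.55}. The trapped pieces contribute $[0,\beta_{\mp,0,n,\max}]$ or its reflection, which are already contained in $\mathbb R$. Since the absolutely continuous spectrum of an orthogonal sum is the closure of the union of the pieces, we get $\sigma_{\rm ac}(\tilde{\mathcal A}_0)=\mathbb R$.

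Finally I would prove $\sigma_{\rm ess}=\mathbb R$. Every $\lambda\neq0$ lies in the absolutely continuous spectrum and is not an eigenvalue, so it cannot be an isolated eigenvalue of finite multiplicity. The point $\lambda=0$ is an eigenvalue of infinite multiplicity, and it is also not isolated. Hence the whole spectrum $\mathbb R$ is essential.

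The only delicate step is the bookkeeping behind $\tilde{\mathbf V}$. One must check that it is indeed unitary, which rests on the change of variables $\mathcal E=\mathcal E_{\mp,j,n}(\beta)$ with Jacobian $|p_{\mp,j,n}|$. This in turn uses the strict monotonicity of $T_{\mp,j}$ from Proposition~\ref{proper}. One must also check that the images of the $\beta_{\mp,j,n}$ are the stated intervals. I take both as given by the earlier constructions and Proposition~\ref{proper}.
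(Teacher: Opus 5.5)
Your proposal is correct and takes essentially the same approach as the paper. You reduce to $\tilde{\mathcal A}_0$ via the unitary equivalences \eqref{5.22} and \eqref{5.90}, read off from \eqref{5.91} that $\tilde{\mathcal A}_0$ is zero on the infinite-dimensional $n=0$ components together with $L^2_0$ and is multiplication by $\beta$ (hence absolutely continuous) on the remaining components, and note that the free-region pieces over $(0,\infty)$ and $(-\infty,0)$ give $\sigma_{\rm ac}=\sigma_{\rm ess}=\mathbb R$, which spells out the bookkeeping the paper leaves implicit.
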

  \begin{proof} By the unitary equivalences in \eqref{5.22}, and \eqref{5.90} is is enough to prove the theorem for $\tilde{\mathcal A}_0.$ Further, the theorem follows for  $\tilde{\mathcal A}_0$ by \eqref{5.91}. We recall that an operator of multiplication by a continuous strictly  monotonic function is absolutely continuous.
 \end{proof}   
Note that,
\beq\label{5.92}
\mathcal A_0=  (\mathcal F \mathbf V)^{-1} \tilde{\mathcal A}_0 \,(\mathcal F \mathbf V).
\ene
Then, $ \mathcal F \mathbf V$ gives a spectral representation for $\mathcal A_0.$ 

Observe, furthermore, that for any  Borel set $\Delta \subset \mathbb R \setminus\{0\}$
\beq\label{5.93}
\mathcal F E_{\hat{\mathcal A}_0 }(\Delta) = \chi_\Delta(\beta) \mathcal F P_{\rm ac}(\hat{\mathcal A_0}).
\ene
Where $ P_{\rm ac}(\hat{\mathcal A}_0)$ denotes the projector onto the absolutely continuous subspace of $\hat{\mathcal A}_0,$ that we denote by $\mathcal H_{\rm ac}(\mathcal A_0).$

We denote,
 \beq\label{5.94}\begin{array}{c}
 \tilde{\mathcal H}_0:=\oplus_{n \in \mathbb Z_0}\left [ Q_{-,0,n}\oplus  Q_{-,1,n}\oplus Q_{-,1,n} \right] \oplus_{n \in \mathbb Z_0}
 \left [  Q_{+,0,n}\oplus  Q_{+,1,n}\oplus Q_{+,1,n} \right].  \end{array}
\ene
Note that $ \tilde{\mathcal H}_0$ is isomorphic to $P_{\rm ac}(\tilde{\mathcal A}_0) \tilde{\mathcal H}.$
 
 We prepare the following proposition. 
\begin{proposition} \label{propgf} Suppose that items (b) and (c)of Assumption~\ref{assum0} hold.
Let us denote, $\mathcal F_0:= \mathcal F P_{\rm ac}(\hat{\mathcal A}_0).$
We have that  $\mathcal F_0$ is partially isometric with initial subspace   $\hat{\mathcal H}_{\rm ac}(\hat{\mathcal A}_0)$  and final subspace  $\tilde{\mathcal H}_0.$
\end{proposition}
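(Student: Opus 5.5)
The plan is to read off the statement from the unitarity of $\mathcal F$ from $\hat{\mathcal H}$ onto $\tilde{\mathcal H}$ and the explicit form \eqref{5.91} of $\tilde{\mathcal A}_0=\mathcal F\hat{\mathcal A}_0\mathcal F^{-1}$. The only substantive point is to identify $\mathcal F\,\hat{\mathcal H}_{\rm ac}(\hat{\mathcal A}_0)$ with $\tilde{\mathcal H}_0$, that is, with the vectors of $\tilde{\mathcal H}$ whose $n=0$ components and whose $L^2_0$ component $F$ vanish. Once this identification is available, $\mathcal F_0=\mathcal F P_{\rm ac}(\hat{\mathcal A}_0)$ is isometric on $\hat{\mathcal H}_{\rm ac}(\hat{\mathcal A}_0)$, because $\mathcal F$ is unitary. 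It vanishes on the orthogonal complement by construction. Its range is $\mathcal F\hat{\mathcal H}_{\rm ac}(\hat{\mathcal A}_0)=\tilde{\mathcal H}_0$. These three facts are exactly the statement that $\mathcal F_0$ is a partial isometry with the claimed initial and final subspaces.

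To get the identification, first use unitary equivalence: $P_{\rm ac}(\tilde{\mathcal A}_0)=\mathcal F P_{\rm ac}(\hat{\mathcal A}_0)\mathcal F^{-1}$, so that $\mathcal F\hat{\mathcal H}_{\rm ac}(\hat{\mathcal A}_0)=\tilde{\mathcal H}_{\rm ac}(\tilde{\mathcal A}_0)$. By \eqref{5.91}, $\tilde{\mathcal A}_0$ is the orthogonal direct sum of two pieces:
\begin{itemize}
\item[(i)] the zero operator on the $n=0$ components $Q_{\mp,0,0}\oplus Q_{\mp,1,0}\oplus Q_{\mp,1,0}$ (here $Q_{\mp,j,0}$ means the spaces $\mathcal Q_{\mp,j,0}$ of \eqref{5.66}, \eqref{5.69}) together with $L^2_0((-P/2,P/2))$;
\item[(ii)] multiplication by the variable $\beta$ on each space $Q_{\mp,0,n}$ and $Q_{\mp,1,n}$ with $n\in\mathbb Z_0$, where $\beta$ ranges over intervals equipped with Lebesgue measure.
\end{itemize}
The absolutely continuous subspace of an orthogonal sum of reducing pieces is the sum of the absolutely continuous subspaces of the pieces. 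Piece (i) has pure point spectrum $\{0\}$, so its absolutely continuous subspace is $\{0\}$. In piece (ii), for $f\in Q_{\mp,j,n}$ the spectral measure is $\mu_f(\Delta)=\int_\Delta|f(\beta)|^2\,d\beta$, which is absolutely continuous with respect to Lebesgue measure. Hence each such space lies entirely in the absolutely continuous subspace. Summing over $n\in\mathbb Z_0$ gives $\tilde{\mathcal H}_{\rm ac}(\tilde{\mathcal A}_0)=\tilde{\mathcal H}_0$, consistent with Theorem~\ref{sp0}.

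I expect the only delicate point to be making sure that the $n\ne0$ components really are multiplication operators on $L^2(d\beta)$ with Lebesgue measure. This requires checking that the change of variables $\mathcal E\mapsto\beta$ in \eqref{5.87}, with weight $\sqrt{|p_{\mp,j,n}(\beta)|\,T_{\mp,j}(\mathcal E_{\mp,j,n}(\beta))}$, is unitary from $L^2(I_{\mp,j},l^2\text{-component};T_{\mp,j}\,d\mathcal E)$ onto $Q_{\mp,j,n}$. This in turn uses the strict monotonicity of $T_{\mp,j}$ from Proposition~\ref{proper}, which makes $\beta_{\mp,j,n}$ a diffeomorphism onto the stated intervals, and the identity $d\mathcal E=|p_{\mp,j,n}(\beta)|\,d\beta$ from \eqref{5.53} and \eqref{5.57}. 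With that in hand the rest is the abstract argument above. I would also note that \eqref{5.93} follows in the same way, since $\chi_\Delta(\tilde{\mathcal A}_0)$ acts as multiplication by $\chi_\Delta(\beta)$ on $\tilde{\mathcal H}_0$ and as zero elsewhere when $0\notin\Delta$.
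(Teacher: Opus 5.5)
Your proposal is correct and follows essentially the paper's route. The paper identifies $\hat{\mathcal H}_{\rm ac}(\hat{\mathcal A}_0)$ with $(\mathrm{Ker}\,\hat{\mathcal A}_0)^\perp$ via Theorem~\ref{sp0} and then reads the claim off \eqref{5.90}, \eqref{5.91} and the unitarity of $\mathcal F$; you instead compute $\tilde{\mathcal H}_{\rm ac}(\tilde{\mathcal A}_0)=\tilde{\mathcal H}_0$ directly from \eqref{5.91}, which simply makes the relevant content of Theorem~\ref{sp0} explicit.
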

\begin{proof} By Theorem~\ref{sp0}  $\hat{\mathcal H}_{\rm ac}(\hat{\mathcal A}_0)$ is the orthogonal complement
of the kernel of $\hat{\mathcal A}_0.$ Then, the proposition follows from \eqref{5.90}, \eqref{5.91}, and since $\mathcal F$ is unitary from $\hat{\mathcal H}$ onto $\tilde{\mathcal H}.$   
\end{proof}
We prepare the following results that we  use to write our spectral representation  \eqref{5.93} in terms of {\it trace maps}. 
 We denote by $l^2_{j},$  for $j=1,2,\dots$ the  Hilbert space of all the complex valued square summable sequences $\{ f_n\}, n=j,j+1, j+2, \dots .$ Further, for $j=-1,-2-\dots$  we designate by $l^2_{j},$ the  Hilbert space of all the complex valued square summable sequences $\{ f_n\}, n=j,j-1, j-2, \dots.$ Moreover, we designate, 
\beq\label{5.95}
\tilde{\mathcal H}_{\mp,0, j}= L^2((\beta_{\mp,  j-1 ,{\rm max}},  \beta_{\mp, j  ,{\rm max} } ), l^2_j),\qquad j \in \mathbb N,
\ene
\beq\label{5.96}
\tilde{\mathcal H}_{\mp,0,j}= L^2((-\beta_{\mp, -j,{\rm max} }, -\beta_{\mp, -j-1,{\rm max} }), l^2_j  ),\qquad     - j \in \mathbb N,
\ene
where we denote
$$
\beta_{\mp, 0,{\rm max}}:=0. 
$$
Further, we designate,
\beq\label{5.97}
\tilde{\mathcal H}_{ \mp,1}= L^2((0, \infty), l^2_1\oplus l^2_1),
\ene
and 
\beq\label{5.98}
\tilde{\mathcal H}_{ \mp,-1}= L^2((-\infty,0), l^2_{-1}\oplus l^2_{-1}).
\ene
We remark  that $\tilde{\mathcal H}_0$ can be written a follows,
\beq\label{5.99}\begin{array}{c}
\tilde{\mathcal H}_0= \oplus_{j \in \mathbb N} \tilde{\mathcal H}_{-,0, j} \oplus \tilde{\mathcal H}_{-,1}
 \oplus_{j \in \mathbb N}\tilde{\mathcal H}_{+,0, j} \oplus \tilde{\mathcal H}_{+,1}\oplus_{-j \in \mathbb N} 
  \tilde{\mathcal H}_{-,0, j} \oplus\\ \tilde{\mathcal H}_{-,-1}
 \oplus_{-j \in \mathbb N}\tilde{\mathcal H}_{+,0, j} \oplus \tilde{\mathcal H}_{+,-1}.
 \end{array}
 \ene
 Let $\Delta$ be an interval  such that
 \beq\label{5.100}
 \Delta \subset \mathbb R \setminus \ds \{ \{0\} \ds \ds \cup_{\nu=\pm, j \in \mathcal N}\{ \beta_{\nu, j, {\rm max}},  -\beta_{\nu, j, {\rm max}}\}\}.
 \ene
 Then, if $\Delta \subset (0,\infty)$ there are $j_\mp \in \mathbb N$ such that, 
 \beq\label{5.101}
 \Delta \subset  (\beta_{-,  j_--1 ,{\rm max}},  \beta_{-, j_-  ,{\rm max} } )  \cap       (\beta_{+,  j_+-1 ,{\rm max}},  \beta_{+, j_+  ,{\rm max} } ).
 \ene
  Further, if $\Delta \subset (-\infty,0)$ there are $j_\mp \in \mathbb N$  such that, 
 \beq\label{5.102}
 \Delta \subset  (-\beta_{-,  j_- ,{\rm max}},  -\beta_{-, j_- -1  ,{\rm max} } )  \cap   (-\beta_{+,  j_+ ,{\rm max}},  -\beta_{+, j_+-1  ,{\rm max} } ).
 \ene
 
 Then, by \eqref{5.93}, for  $G \in \hat{\mathcal H}_0$ and $\Delta$ as in \eqref{5.100}
\beq\label{5.103}
\left(\mathcal F E_{\hat{\mathcal A}_0 }({\Delta}) G \right)(\beta)= \chi_{{\Delta}}(\beta) \mathcal L_{j_-,j_+}(\beta) 
\mathbf F P_{\rm ac}(\hat{\mathcal A_0}) G,
\ene
where,   the  {\it trace maps}  $\mathcal L_{j_-,j_+}(\beta)$ are defined as follows,
\beq\label{5.104}
\mathcal L_{j_-,j_+}(\beta) B=\begin{pmatrix} (\mathcal L_{j_-,j_+}(\beta) B)_-\\(\mathcal L_{j_-,j_+}(\beta) B)_+
\end{pmatrix},
\ene
where, if $ \Delta \subset (0,\infty),$  
\beq\label{5.105}
  (\mathcal L_{j_-,j_+}(\beta) B)_\mp  := \begin{pmatrix}  \{ \sqrt{ |p_{\mp,0,n_\mp}(\beta)|  T_{\mp,0}(\mathcal E_{\mp,0,n_\mp}(\beta))}\,       b_{\mp,0,n_\mp}(\beta) \}, n_\mp= j_\mp, j_\mp+1, \dots\\[.5cm]
      \{ \sqrt{ |p_{\mp,1,n}(\beta) | T_{\mp,1}(\mathcal E_{\mp,1,n}(\beta)) }   \,  b_{\mp,1,+,n}(\beta)\}, n=1,2,\dots\\[.5cm]  \{\sqrt{ |p_{\mp,1,n}(\beta)|  T_{\mp,1}(\mathcal E_{\mp,1,n}(\beta)) }  \, b_{\mp,1,-,n}(\beta)\}, n=1,2,\dots
   \end{pmatrix},
   \ene
   and if $ \Delta \subset (-\infty,0),$  
\beq\label{5.106}
  (\mathcal L_{j_-,j_+}(\beta) B)_\mp  := \begin{pmatrix}  \{ \sqrt{ |p_{\mp,0,n_\mp}(\beta)|  T_{\mp,0}(\mathcal E_{\mp,0,n_\mp}(\beta))}\,       b_{\mp,0,n_\mp}(\beta) \}, n_\mp= -j_\mp,  -j_\mp-1, \dots\\[.5cm]
      \{ \sqrt{ |p_{\mp,1,n}(\beta)|  T_{\mp,1}(\mathcal E_{\mp,1,n}(\beta)) }   \,  b_{\mp,1,+,n}(\beta)\}, n=-1,-2,\dots\\[.5cm]  \{\sqrt{ |p_{\mp,1,n}(\beta)|  T_{\mp,1}(\mathcal E_{\mp,1,n}(\beta)) }  \, b_{\mp,1,-,n}(\beta)\}, n=-1,-2,\dots
   \end{pmatrix}.
   \ene
We call  $\mathcal L_{j_-,j_+}(\beta)$ {\it trace maps} because they take the trace at a fixed value of the spectral parameter $\beta.$
\section{ The Vlasov-Amp\`ere  operator}\label{vlasamp}
We begin this section defining a selfadjoint realization of the formal Vlasov-Amp\`ere operator \eqref{0.33}- \eqref{0.35}. We find it convenient to work in energy-angle variables. For this purpose we define,
\beq\label{6.1}
\hat{\mathcal A}:= \mathbf V \mathcal A \mathbf V^{-1}= \hat{\mathcal A}_0+ \hat{\mathcal V},
\ene
where
\beq\label{6.2}
\hat{\mathcal V}:=   \mathbf V \mathcal V \mathbf V^{-1}.
\ene
 In the following theorem we prove that  $\mathcal A$ and  $\hat{\mathcal A}$ are selfadjoint, with domain, respectively,
$D[\mathcal A]= D[\mathcal A_0],$ and  $D[\hat{\mathcal A}]= D[\hat{\mathcal A}_0],$ and  that their essential spectrum coincide with  the real axis.  For this purpose  we strengthen  the decay condition that we impose in $h'_\pm(v^2/2).$

\begin{theorem}\label{vasp} Suppose that Assumption~\ref{assum0} is satisfied and that    $h'_\pm(v^2/2) v^2 $  is  a integrable function of $v\in \mathbb R.$ Then.
\begin{enumerate} 
\item[ \rm(a)]The operator ${\mathcal A}= {\mathcal A_0}+ {\mathcal V}$ is selfadjoint with domain $D[{\mathcal A}]= D[{\mathcal A_0}].$ Further, the operator  $\hat{\mathcal A}=   \hat{\mathcal A_0}+ \hat{\mathcal V}$ is selfadjoint with domain $D[\hat{\mathcal A}]= D[\hat{\mathcal A_0}].$
\item[\rm (b)]
\beq\label{6.11}
\sigma_{\rm ess}({\mathcal A})= \sigma_{\rm ess}(\hat{\mathcal A}) =  \sigma({\mathcal A_0})= \sigma(\hat{\mathcal A_0})= \mathbb R.
\ene
\end{enumerate}
\end{theorem}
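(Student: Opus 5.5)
The plan is to show that $\mathcal V$ is a bounded selfadjoint operator on $\mathcal H$ and that it is compact. Part (a) then follows from the Kato--Rellich theorem. Part (b) follows from Weyl's theorem combined with Theorem~\ref{sp0}. All statements for $\hat{\mathcal A}$ transfer through the unitary $\mathbf V$, by \eqref{6.1} and \eqref{6.2}.

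\textbf{Step 1: $\mathcal V$ is bounded and symmetric.} Write $\mathcal V$ in block form with entries $\mathcal V_{13}$, $\mathcal V_{23}$, $\mathcal V_{31}$, $\mathcal V_{32}$. The entry $\mathcal V_{13} F = -i v\sqrt{|h'_-(\mathcal E_-(x,v))|}\,F(x)$ maps $L^2_0((-P/2,P/2))$ into $L^2(\mathbb T_P\times\mathbb R)$. Its norm is bounded by
$$\sup_{x}\int_{\mathbb R} v^2|h'_-(\tfrac12 v^2-\varphi_0(x))|\,dv .$$
I will show this quantity is finite using the hypothesis that $h'_\pm(v^2/2)v^2$ is integrable, together with boundedness of $\varphi_0$. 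The change of variables $w^2/2 = v^2/2-\varphi_0(x)$ converts the integral into one of the form $\int w^2|h'(w^2/2)|\,(\cdots)\,dw$. The factor $v\,dv = w\,dw$ is harmless away from the small region where the shift matters, and that region contributes at most a continuity bound on $h'$ over a compact set. This bound is the only place where real estimates are needed; it is routine but I expect it to be the main technical point.

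The same computation bounds $\mathcal V_{23}$. The entries $\mathcal V_{31}$ and $\mathcal V_{32}$ are $\mathcal I$ composed with the adjoints of these multiplication-type maps, with signs chosen so that $\mathcal V_{31} = \mathcal V_{13}^*$ and $\mathcal V_{32} = \mathcal V_{23}^*$. This works because $\mathcal I$ is the orthogonal projection onto $L^2_0$, and the factor $-i$ together with the sign pattern in \eqref{0.35} gives Hermitian symmetry. Hence $\mathcal V$ is bounded and symmetric, so it is selfadjoint on $\mathcal H$.

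\textbf{Step 2: Kato--Rellich.} A bounded selfadjoint perturbation has relative bound $0$. Since $\mathcal A_0$ is selfadjoint by Theorem~\ref{thoesefad}, $\mathcal A = \mathcal A_0+\mathcal V$ is selfadjoint on $D[\mathcal A_0]$. Conjugating by $\mathbf V$ gives the same statement for $\hat{\mathcal A}$ on $D[\hat{\mathcal A}_0]$.

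\textbf{Step 3: Essential spectrum.} I claim $\mathcal V$ has finite rank in the sense relevant here; at minimum, the operators $\mathcal V_{13}$ and $\mathcal V_{23}$ factor through $L^2_0((-P/2,P/2))$. In fact, a cleaner route is to note that $\mathcal V$ is compact relative to $\mathcal A_0$ on the third component: the third component of $\mathcal A_0$ is zero, so one cannot argue that way. Instead I would use Weyl's criterion directly. By Theorem~\ref{sp0}, every $\lambda\ne0$ lies in $\sigma_{\rm ac}(\mathcal A_0)$. Take a singular Weyl sequence $U_k$ for $\mathcal A_0$ at $\lambda$, supported in the $u_\mp$ components and built in energy-angle variables from the functions $e^{2\pi i n\mu}\psi_k(\mathcal E)$, with $\psi_k$ concentrating at $\mathcal E_{\mp,1,n}(\lambda)$. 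Then $U_k\rightharpoonup 0$. The quantity $\mathcal V U_k$ only involves the $x$-dependent moments $\int u\, v\sqrt{|h'|}\,dv$. These moments tend to zero in $L^2$: they are Fourier-type oscillatory averages of shrinking-support functions, and I would verify this by dominated convergence. So $(\mathcal A-\lambda)U_k\to0$, giving $\lambda\in\sigma_{\rm ess}(\mathcal A)$. Since $\sigma_{\rm ess}$ is closed, $0$ is included as well, so $\sigma_{\rm ess}(\mathcal A)=\mathbb R = \sigma(\mathcal A_0)$. Unitary equivalence via $\mathbf V$ gives the corresponding statements for $\hat{\mathcal A}$ and $\hat{\mathcal A}_0$.
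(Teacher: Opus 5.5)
Your proof is correct and follows the paper's route. For (a), $\mathcal V$ is bounded and symmetric, and Kato--Rellich applies. For (b), a Weyl sequence for $\mathcal A_0$ with vanishing field component remains a Weyl sequence for $\mathcal A$. Your energy-concentrated choice $e^{2\pi i n\mu}\psi_k(\mathcal E)$, where Cauchy--Schwarz on the shrinking support gives $\sup_x\bigl|\int u_k\,v\sqrt{|h'_\mp(\mathcal E_\mp)|}\,dv\bigr|\lesssim|\mathrm{supp}\,\psi_k|^{1/2}\to0$ (no oscillation is needed), supplies the justification that the paper only asserts.

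Delete the stray claims that $\mathcal V$ is compact or of finite rank, because they are false. We have $\|\mathcal V_{13}F\|^2=\int|F(x)|^2\bigl(\int v^2|h'_-(\mathcal E_-(x,v))|\,dv\bigr)dx$, so when $h'_-<0$ the block $\mathcal V_{13}$ is bounded below on the infinite-dimensional space $L^2_0((-P/2,P/2))$. That is why (b) has to come from the direct Weyl-criterion argument of your Step~3, not from Weyl's compact-perturbation theorem.
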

\begin{proof}  As $\sqrt{|h'_\pm(v^2/2)|}\, v$ are square integrable over $\mathbb R,$  by \eqref{0.35} $\mathcal V$ is a bounded operator on $\mathcal H.$ Then, by the unitary equivalence in \eqref{6.2} $\hat{\mathcal V}$ is bounded on $\hat{\mathcal H}.$ It follows from the Kato-Rellich theorem (see Theorem 4.3 in page 287) of \cite{kato}) that $\mathcal A:= \mathcal A_0+\mathcal V$ is selfadjoint in $\mathcal H$ with domain $D[\mathcal A]= D[\mathcal A_0]$ and that$\hat{\mathcal A}:= \hat{\mathcal A}_0+ \hat{\mathcal V}$ is selfadjoint in $\hat{\mathcal H}$ with domain $D[\hat{\mathcal A}]= D[\hat{\mathcal A}_0].$ This proves (a). By the unitary equivalence in \eqref{6.1} it  is  enough prove (b) for $\mathcal A.$ As by Theorem~\ref{sp0} the essential spectrum of $\mathcal A_0$ is $\mathbb R$ it follows from Weyl's criterium (see Lemma 6.17 of \cite{tes}) that for each $\lambda \in \mathbb R$ there is a singular Weyl sequence for $\mathcal A_0.$ Namely, a sequence $\Psi_n\in D[\mathcal A_0]$ such that $\|\Psi_n\|_{\mathcal H}=1,$  $\Psi_n$ tends weakly to zero in $\mathcal H$ and
$\|(\mathcal A -\lambda)\Psi_n\|_{\mathcal H}\to 0,$  as $n \to \infty.$ Further, by \eqref{5.22}, \eqref{5.90}, and \eqref{5.91} we can take the singular sequence as
$\Psi_n:=(\Psi_{n,-},0,0)^T.$
Moreover, since   $\mathcal A:= \mathcal A_0+\mathcal V,$ by \eqref{0.35} and as  $\sqrt{|h'_-(v^2/2)|}\, v$ is square integrable over $\mathbb R,$    it follows that $\Psi_n$ is also a singular Weyl sequence for $\mathcal A.$ Hence, by  Lemma 6.17 of \cite{tes} we have that
$\lambda \in \sigma_{\rm ess}(\mathcal A).$
 \end{proof}
  In the following proposition we construct explicitly a family of eigenvectors of $\mathcal A$   with eigenvalue zero and we prove that the vectors in $\mathcal H$ that are orthogonal to the eigenvectors of a sub-family of that family satisfy the Gauss law.
\begin{proposition}\label{gauss}
Suppose that Assumption~\ref{assum0} is satisfied and that    $h'_\pm(v^2/2) v^2 $  is  an integrable function of $v\in \mathbb R.$ Then, we have.
\begin{enumerate}
\item[\rm (a)]
For any function $F_0 \in L^2_0(-P/2,P/2)$  the vector

 \beq\label{6.10a}
  U_{F_0}(t,x,v):= \begin{pmatrix} - g(x) \sqrt{|h_-'( \mathcal E_-(x,v))|}  \\   g(x) \sqrt{|h_+'( \mathcal E_+(x,v))|}   \\ F_0(x)
  \end{pmatrix} \in D[\mathcal A], 
  \ene
  where $g(x):=\int_0^x F_0(y) dy.$  Moreover, $U_{F_0}$  belongs to the kernel of $ \mathcal A.$ 
  \item[\rm (b)] Zero in an eigenvalue of $\mathcal A$ and of $\hat{\mathcal A}$ of infinite multiplicity.
  
  \item[\rm (c)] Every vector $ U:=(u_-(x,v), u_+(x,v), F(x)) \in \mathcal H$ that is orthogonal to   $U_{F_0}$ for
all   $F_0$ such that, 
  \beq\label{6.10b}
   F_0\in C^\infty_0(-P/2,P/2), \qquad \int_{(-P/2, P/2)} F_0(x) dx=0
   \ene
  fulfills the Gauss law,
   \beq \label{6.10c}\begin{array}{l}
   \partial_x F(x)=  \int_{\mathbb R} \left( u_+(x,v)\,  \sqrt{|h'_+\left(\mathcal E_+(x,v)\right)|} \right. -\\[.2cm]
\left. u_-(x,v)\, \sqrt{|h'_-\left(\mathcal E_-(x,v)\right)|}\right)\, dv.
 \end{array}
 \ene.
 \end{enumerate}
 \end{proposition}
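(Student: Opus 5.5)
The plan is to verify each item by direct computation, using the structure of $U_{F_0}$. Write $w_\mp(x,v):=\sqrt{|h'_\mp(\mathcal E_\mp(x,v))|}$. The key observation is that $w_\mp$ is a function of the energy $\mathcal E_\mp$ alone, so it is annihilated by the transport operator $v\partial_x\mp\partial_x\varphi_0\,\partial_v$. It is also even in $v$.

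For (a), first note that $g(x)=\int_0^xF_0$ belongs to $H^{1}((-P/2,P/2))$ and satisfies $g(-P/2)=g(P/2)$, because $\int F_0=0$; hence $g$ is periodic. To show $U_{F_0}\in D[\mathcal A]=D[\mathcal A_0]$, I would apply Theorem~\ref{thoesefad}(a) and check that $\mathbf V U_{F_0}\in D[\hat{\mathcal A}_0]$.
\begin{itemize}
\item In each energy-angle chart, the $u_\mp$-component becomes $\mp\, w_\mp(\mathcal E_\mp)\, g(x(\mu,\mathcal E_\mp))$.
\item For fixed energy, $\mu\mapsto x(\mu,\mathcal E_\mp)$ is $C^1$ and closes up periodically: on trapped orbits it is a closed orbit, and on free orbits the endpoints $\mp P/2$ are identified. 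So $g\circ x(\cdot,\mathcal E_\mp)\in H^{1,\rm p}((0,1))$.
\item Its $\mu$-derivative is $T(\mathcal E_\mp)\,F_0(x)\,v\,w_\mp$. The weighted integral in \eqref{5.20} then becomes, via \eqref{1.111}, \eqref{1.116}, \eqref{1.128} etc., a constant times $\int\!\!\int |F_0(x)|^2 v^2|h'_\mp(\mathcal E_\mp)|\,dx\,dv$.
\item This is finite because $v^2h'_\mp(v^2/2)$ is integrable, $\varphi_0$ is bounded and continuous, and $F_0\in L^2$. I expect some care is needed in making the shift of the energy argument rigorous, using monotonicity or continuity of $h'$ on compact energy ranges.
\end{itemize}

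Next I would compute $\mathcal A U_{F_0}$ using Theorem~\ref{thoesefad}(b), so that $\mathcal A_0=\mathcal D$ in the weak sense.
\begin{itemize}
\item First component: $\mathcal D_-(-g w_-)=i\,v\,F_0w_-$, while the $\mathcal V$-contribution is $-i\,v\,w_-F_0$. These cancel.
\item Second component: $\mathcal D_+(g w_+)=-i\,v F_0w_+$, while $\mathcal V$ contributes $+i\,vw_+F_0$. These also cancel.
\item Third component: it involves $\mathcal I\int g\,w_\mp^2\,v\,dv$, which vanishes because $w_\mp^2$ is even in $v$.
\end{itemize}
Hence $\mathcal A U_{F_0}=0$.

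For (b), the map $F_0\mapsto U_{F_0}$ is linear and injective from the infinite-dimensional space $L^2_0$, since the third component is $F_0$ itself. So $\ker\mathcal A$ is infinite dimensional. The unitary equivalence \eqref{6.1} transfers this to $\hat{\mathcal A}$.

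For (c), set $\rho(x):=\int(u_+w_+-u_-w_-)\,dv$. By Cauchy–Schwarz in $v$ and the integrability of $h'_\mp$ along bounded energy shifts, $\rho\in L^1((-P/2,P/2))$, which suffices for a distributional identity.
\begin{itemize}
\item Orthogonality to $U_{F_0}$, with $F_0$ real (real and imaginary parts are handled separately), reads $\int \rho\,g\,dx+\int F\,g'\,dx=0$.
\item As $F_0$ ranges over \eqref{6.10b}, the function $g=\int_{-P/2}^xF_0$ ranges over all of $C_0^\infty(-P/2,P/2)$. Note that $g$ differs from $\int_0^xF_0$ only by a constant $c$.
\item That constant contributes $c\,(\int\rho+\int F')$-type terms. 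I would handle this either by writing the orthogonality with $g$ normalized as $\int_0^x$ and absorbing the constant, or by noting that $\int\rho=0$ is not needed once one tests only $g'$ with compact support.
\end{itemize}
I expect this constant bookkeeping to be the main subtle point, together with the domain verification in (a). Once it is settled, $\int F\,\phi'=-\int\rho\,\phi$ holds for all $\phi\in C_0^\infty$, that is, $\partial_xF=\rho$ in the sense of distributions, which is \eqref{6.10c}.
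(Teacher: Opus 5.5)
Your parts (a) and (b) are fine and amount to the paper's ``direct computation''. You are in fact more careful than the paper in checking $\mathbf V U_{F_0}\in D[\hat{\mathcal A}_0]$. The gap is in (c), exactly at the point you flagged, and neither of your proposed resolutions works.

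Put $w_\pm:=\sqrt{|h'_\pm(\mathcal E_\pm)|}$, $\rho:=\int(u_+w_+-u_-w_-)\,dv$, $Q:=\int_{-P/2}^{P/2}\rho\,dx$ and $\tilde g(x):=\int_{-P/2}^xF_0$. The hypothesis uses $g=\tilde g-\tilde g(0)$, so orthogonality to $U_{F_0}$ reads
\[
\int\rho\,\overline{\tilde g}\,dx+\int F\,\overline{\tilde g'}\,dx=\overline{\tilde g(0)}\,Q \qquad \text{for all } \tilde g\in C^\infty_0(-P/2,P/2).
\]
The constant multiplies $Q$, so it cannot be absorbed. Also, the test vectors see $g$ itself, which equals $-\tilde g(0)$ near $\pm P/2$, and not only $g'$. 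What the hypothesis actually gives is $\partial_xF=\rho-Q\,\delta_0$ in $\mathcal D'(-P/2,P/2)$.

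This is sharp. Take $u_-=0$ and $u_+=w_+$, so that $Q=\|w_+\|^2_{L^2}>0$ when $h'_+<0$. Set $F(x):=\int_{-P/2}^x\rho\,dy-Q\,\chi_{(0,P/2)}(x)+k$, with $k$ chosen so that $\int F=0$. The resulting $U$ lies in $\mathcal H$ and is orthogonal to every $U_{F_0}$ with $F_0$ as in \eqref{6.10b}. Yet $F$ jumps by $-Q$ at $x=0$, so \eqref{6.10c} fails. Thus (c) is false as literally stated, and no bookkeeping can close your argument.

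The paper's own proof has the same hole. It asserts that every $g\in C^\infty_0(-P/2,P/2)$ can be written as $\int_0^xF_0$ with $F_0$ as in \eqref{6.10b}, which is true only when $g(0)=0$. What is missing is an extra ingredient; there are two ways to supply it.
\begin{itemize}
\item Normalize $g(x):=\int_{-P/2}^xF_0$ in \eqref{6.10a}. Then $g$ runs over all of $C^\infty_0(-P/2,P/2)$, as you observed, and you get \eqref{6.10c} on the open interval. Parts (a) and (b) are unaffected, because $(-c\,w_-,c\,w_+,0)\in\ker\mathcal A$ for every constant $c$ by the same computation: $w_\mp$ depend only on the energy and are even in $v$.
\item Keep $\int_0^x$ and add the hypothesis $U\perp(-w_-,w_+,0)$, which is exactly $Q=0$.
\end{itemize}
In the application in Section~\ref{ladam} one has $U(0)\in\ker[\mathcal A]^\perp$. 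So both conditions hold there, and the Gauss law does follow.
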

 \begin{proof}
 That   $U_{F_0}\in D[\mathcal A]$  and that $ \mathcal A U_{F_0}  =0$ follows from direct computation using the definition of $\mathcal A$ in \eqref{0.33}-\eqref{0.35} and remarking that   $ \mathcal D_\mp (\mp g(x)  \sqrt{|h'_\mp\left(\mathcal E_\mp(x,v)\right)|})= \mp F_0(x)  v \sqrt{|h'_\mp\left(\mathcal E_\mp(x,v)\right)|}   .$
 This proves (a). Item (b) follows from (a) and observing that the set of all vectors of the form \eqref{6.10a} is an infinite dimensional  linear subspace of $\mathcal H.$ To prove (c) let  $ U:=(u_-(x,v), u_+(x,v), F(x)) \in \mathcal H$ be orthogonal to $U_{F_0}$ for
  all $F_0$ that satisfy \eqref{6.10b}. Then
 \beq\label{6.10cd}\begin{array}{l}
 (U,U_0)_{\mathcal H} = \int_{(-P/2, P/2)\times \mathbb R} u_+(x,v)\, \overline{g(x)} \, \sqrt{|h'_+\left(\mathcal E_+(x,v)\right)|} dx dv- \\[.3cm] \int_{(-P/2, P/2)\times \mathbb R} u_-(x,v)\, \overline{g(x)}  \,\sqrt{|h'_-\left(\mathcal E_-(x,v)\right)|}dx dv + \int_{(-P/2, P/2)} F(x) \overline{F_0}(x)  dx=0.
 \end{array} 
 \ene
 Remark that any $ g\in C^\infty_0(-P/2, P/2)$ can be written as  $g(x)=\int_0^x F_0(y) \,dy,$ with a
 $F_0$ that satisfies \eqref{6.10b}.  Hence, by \eqref{6.10cd},
 \beq\label{6.10d}
 \left(  h, g  \right )_{L^2 (-P/2, P /2)}=0, \qquad  g\in C^\infty_0(-P/2, P/2),
 \ene
 where
 \beq\label{6.10e}
 h(x)=  - \partial_x F(x)+  \int_{\mathbb R} \left( u_+(x,v)\,  \sqrt{|h'_+\left(\mathcal E_+(x,v)\right)|} \right. -\\[.2cm]
\left. u_-(x,v)\, \sqrt{|h'_-\left(\mathcal E_-(x,v)\right)|}\right)\, dv.
\ene
By \eqref{6.10d} $ h \equiv 0$ and \eqref{6.10c} follows.
 \end{proof}
 
 We denote,
\beq\label{6.18}
M(z):= \hat{\mathcal V} R_{\hat{\mathcal A_0}}(z), \qquad z \in \mathbb C_\pm.
\ene
The operator $M(z)$ is the basic quantity that we use in our perturbation analysis. We find it convenient to split
$M(z)$ into two terms,
\beq\label{6.18b}
M(z)= M_1(z)+ M_2(z),   \qquad z \in \mathbb C_\pm,
\ene
where
\beq\label{6.18c}
M_1(z):=  \hat{\mathcal V}_1 R_{\hat{\mathcal A_0}}(z), \qquad M_2(z):=  \hat{\mathcal V}_2 R_{\hat{\mathcal A_0}}(z)
\ene
with,
\beq\label{6.18d}
   \hat{\mathcal V}_1:= \mathbf V \mathcal V_1 \mathbf V^{-1}, \qquad   \hat{\mathcal V}_2:= \mathbf V \mathcal V_2 \mathbf V^{-1},
   \ene
and (see \eqref{0.35})
   \beq\label{6.18e}\mathcal V_1:= -i \begin{bmatrix} 0&0&  v \sqrt{|h'_-\left(\mathcal E_-(x,v)\right)|}\\
  0&0& - v \sqrt{|h'_+\left(\mathcal E_+(x,v)\right)|}\\
  0&      
  0&0
  \end{bmatrix},
  \ene
     \beq\label{6.18f}\mathcal V_2:= -i \begin{bmatrix} 0&0& 0\\
  0&0& 0\\
  -\mathcal I \int_{\mathbb R} \bullet \sqrt{|h'_-\left(\mathcal E_-(x,v)\right)|} \,v dv&      
  \mathcal I \int_{\mathbb R} \bullet   \sqrt{|h'_+\left(\mathcal E_+(x,v)\right)|} \, v \,dv&0
  \end{bmatrix}.
  \ene
  For $ G=(G_-,G_+,F)^T \in \hat{\mathcal H}$ we have,
  \beq\label{6.18f1}
  M_1(z) G=\begin{pmatrix} (M_1(z) G)_-(\mu,\mathcal E_-)\\ (M_1(z) G)_+(\mu,\mathcal E_+)\\0
\end{pmatrix},
\ene
where
  \beq\label{6.18f2}
  (M_1(z)G)_\mp(\mu, \mathcal E_\mp)=\begin{pmatrix} \mp \ds\frac{ i}{z} v(\mathcal E_\mp, \mu)  \sqrt{|h'_\mp \left(\mathcal E_\mp\right)|} F(x(\mathcal E_\mp,\mu))\\[.3cm] \mp \ds\frac{ i}{z} v(\mathcal E_\mp, \mu)  \sqrt{|h'_\mp\left(\mathcal E_\mp\right)|} F(x(\mathcal E_\mp,\mu))\\[.3cm]
    \mp \ds\frac{ i}{z} v(\mathcal E_\pm, \mu)  \sqrt{|h'_+\left(\mathcal E_\mp\right)|} F(x(\mathcal E_\mp,\mu))
  \end{pmatrix}.
  \ene
 We proceed to write $M_2(z)$ in a way that is convenient for our purposes. First, we extend the definition of $\theta_{-,0}(x,\mathcal E_-)$ to $x\in [-P/2, P/2]$
and of $\theta_{+, 0, \mp}(x,\mathcal E_+),$ respectively, to $ x \in[-P/2,0]$ and to $x \in [0, P/2 ].$
\begin{definition} \label{defang}{\rm
 Recall that the angle $\theta_{-,0}(x, \mathcal E_-)$ is  defined  in  \eqref{1.79} in for $ x_-(\mathcal E_-)\leq x \leq x_+(\mathcal E_-),$ and $ \mathcal E_- \in I_{0,-}.$ Moreover, $ \theta_{-,0}(x_-(\mathcal E_-), \mathcal E_-)=0,$ and $\theta_{-,0}(x_+(\mathcal E_-), \mathcal E_- )= 1/2.$  We  extend the definition of $ \theta_{-,0}(x,\mathcal E_-)$ as follows: $\theta_{-,0}(x, \mathcal E_-):=0,$ for $- P/2\leq  x \leq x_-(\mathcal E_-),$ and $ \theta_{-,0} (x,\mathcal E_-):= 1/2,$ for $  x_+(\mathcal E_-)   \leq x \leq   P/2  .$ Similarly, recall that $\theta_{+,0,\mp}(x, \mathcal E_+)$
 are defined,  respectively, in \eqref{1.89} and in \eqref{1.91} for $ \mathcal E_+\in I_{+,0}$ and for $ x\in [-P/2, x_-(\mathcal E_+)],$ respectively, $ x \in [x_+(\mathcal E_+), P/2)].$ Moreover, $ \theta_{+,0,-}(x_-(\mathcal E_+), \mathcal E_+)=1/4,$ and $ \theta_{+,0,+}(x_+(\mathcal E_+), \mathcal E_+)=1/4.$
 We extend $\theta_{+,0,\mp}(x,\mathcal E_+)$ as follows:
 we define  $\theta_{+,0,-}(x, \mathcal E_+):= 1/4,$ for $x_-(\mathcal E_+)\leq x \leq 0,$ and  $\theta_{+,0,+}(x, \mathcal E_+):= 1/4,$ for $ 0  \leq x \leq  x_+(\mathcal E_+).$}
 \end{definition}
 
 For an operator $B$ in a Hilbert space we denote by $R_B(z)$ its resolvent \cite{kato}.
 
 By \eqref{5.4},  \eqref{5.5}, \eqref{5.84f}-\eqref{5.84k}, for $ z \in \mathbb C_\pm$
\beq\label{6.18g}
(R_{\hat{A}_0}(z)G)_\mp(\mathcal E_\mp,\mu)=\begin{pmatrix}   \sum_{n \in \mathbb Z} \ds\frac{1}{\sqrt{2\pi}} e ^{i2\pi n \mu }    \ds\frac{1}{2\pi n-zT_{\mp,0}(\mathcal E_\mp)}   b_{\mp,0,n}  T_{\mp,0}(\mathcal E_\mp)  \\[.3cm]
\sum_{n\in \mathbb Z}\ds \frac{1}{\sqrt{2\pi}}  e ^{i2\pi n \mu } \ds\frac{1}{2\pi n- zT_{\mp,1}(\mathcal E_\mp)} b_{\mp,1,+,n}( \mathcal E_\mp) T_{\mp,1}(\mathcal E_\mp)\\[.3cm] \sum_{n\in \mathbb Z}\ds \frac{1}{\sqrt{2\pi}}  e ^{i2\pi n \mu } \ds\frac{1}{2\pi n- zT_{\mp,1}(\mathcal E_\mp)} b_{\mp,1,-,n}( \mathcal E_\mp) T_{\mp,1}(\mathcal E_\mp)
\end{pmatrix},
\ene
where
\beq\label{6.6}
b_{\nu,0,n}:= (\mathbf F_1 g_{\nu,0})_n, b_{\nu,1,\upsilon, n}:= (\mathbf F_1g_{\nu,1,\upsilon})_n,\qquad \nu=\mp, \upsilon=\mp, n \in \mathbb Z. 
\ene
Note that,
\beq\label{6.6b}
 (\mathbf F G)_\mp:= \begin{pmatrix}\{b_{\mp,0,n}\}\\ \{ b_{\mp,1,+, n}\}\\\{ b_{\mp,1,-, n}\}\\ 
 \end{pmatrix}.
 \ene
By the second equality in   \eqref{6.18c} and \eqref{6.18d}, and by  \eqref{6.18f}
\beq\label{6.18h}
M_2(z)G=\begin{pmatrix} 0\\0\\ H(x)\end{pmatrix}  
\ene
where by 
 \eqref{1.106},  \eqref{1.112},  \eqref{1.117}, \eqref{1.123}, \eqref{1.135},  \eqref{1.140}, \eqref{6.18f}, and \eqref{6.18g}, 
\beq\label{6.18i}
H(x)= \sum_{\nu=\mp}\left[ H_{\nu,0}(x)+  H_{\nu,1}(x)\right],
\ene 
with:
\beq\label{6.18j}\begin{array}{l}
H_{-,0}(x):=- 2\mathcal I \int_{-\varphi_0(0)}^{-\varphi_0(-P/2)} \sqrt{|h'_-(\lambda)|} \left(\sum_{n \in \mathbb Z_0} \ds\frac{1}{\sqrt{2\pi}} \sin[ 2\pi n \theta_{-,0}(x, \lambda)]   \ds\frac{1}{2\pi n- zT_{-,0}(\lambda)}\right.\\[.3cm]\left.b_{-,0,n}(  \lambda) T_{-,0}(\lambda) \right)\, d\lambda, 
\end{array}
\ene
where we used that when $\mathcal E_-=-\varphi_0(x),$ we have that $ x=x_\mp(\mathcal E_-)$ for $\mp x >0.$
Further,

\beq\label{6.18l}\begin{array}{l}
H_{+,0}(x):= 2\mathcal I \int_{\varphi_0(P/2)}^{\varphi_0(0)} \sqrt{|h'_+(\lambda)|} \left(\sum_{n \in \mathbb Z_0} \ds\frac{1}{\sqrt{2\pi}} a_{+,0,n}(x, \lambda)   \ds\frac{1}{2\pi n- zT_{+,0}(\lambda)}\right.\\[.3cm]\left(b_{+,0,n}(  \lambda) T_{+,0}(\lambda) \right)d\lambda,  
\end{array}
\ene
with
\beq\label{6.18m}
a_{+,0,n}(x,  \mathcal E_+):= \left\{ \begin{array}{l} \pm e^{i\pi n/2} \sin[ 2\pi n \theta_{+,0,\mp }(x, \mathcal E_+)], \qquad  \mp x \geq 0, \, \text{\rm for even} \,n,
\\[.3cm] 
\mp i e^{i\pi n/2} \cos[ 2\pi n \theta_{+,0,\mp }(x, \mathcal E_+)], \qquad  \mp x \geq 0, \, \text{\rm for odd} \,n,
\end{array}\right.
\ene
where we used that when $\mathcal E_+=\varphi_0(x),$ we have that $ x=x_\mp(\mathcal E_+)$ for $\mp x >0.$
Moreover,
\beq\label{6.18n}\begin{array}{l}
H_{-,1}(x):= i\mathcal I \int_{-\varphi_0(-P/2)}^\infty \sqrt{|h'_-(\lambda)|} \left(\sum_{n \in \mathbb Z} \ds\frac{1}{\sqrt{2\pi}} \ds\frac{1}{2\pi n- zT_{-,1}(\lambda)}\right.\\[.3cm]
\left.\left[  e^{i2\pi n \theta_{-,1}(x, \lambda)} b_{-,1,+,n} (  \lambda) -   e^{-i2\pi n \theta_{-,1}(x, \lambda)} b_{-,1,-,n} (  \lambda) \right]    T_{-,1}(\lambda) \right)\, d\lambda,
\end{array}
\ene
and,
\beq\label{6.18o}\begin{array}{l}
H_{+,1}(x):= -i\mathcal I \int_{\varphi_0(0)}^\infty \sqrt{|h'_+(\lambda)|} \left(\sum_{n \in \mathbb Z} \ds\frac{1}{\sqrt{2\pi}} \ds\frac{1}{2\pi n- zT_{+,1}(\lambda)}\right.\\[.3cm]\left.
\left[  e^{i2\pi n \theta_{+,1}(x, \lambda)} b_{+,1,+,n} (  \lambda) -   e^{-i2\pi n \theta_{+,1}(x, \lambda)} b_{+,1,-,n} (  \lambda) \right]    T_{+,1}(\lambda)\right)\, d\lambda.
\end{array}
\ene

We introduce a Banach space that is appropriate for the study of the limit of $M(z)$ when $z \in \mathbb C$ tends to the real axis from above and from below. As mentioned in the introduction, we consider a Banach space of H\"older continuous functions in order to deal with the singularities of the energy-angle variables at the elliptic and the hyperbolic points of the Hamiltonians for the motion of the electrons and the ions under the potential of  the BGK wave.
 For  $0 < \alpha\leq 1,$ and $0 < \xi\leq 1$ we denote,
\beq\label{6.18p}
{\mathcal H}_{\alpha,\xi}:= {\mathcal H}_{-,\alpha}\oplus {\mathcal H}_{+,\alpha} \oplus  \hat{C}_{\xi,0}((-P/2, P/2)),
\ene
where
\beq\label{6.18q}\begin{array}{l}
{\mathcal H}_{\mp, \alpha}:=L^2(I_{\mp,0},L^2(S_1);T_{\mp,0}(\mathcal E_\mp) \,d\mathcal E_\mp )\cap C_{\alpha}(I_{\mp,0}, L^2(S_1))\oplus\\ 
L^2(I_{\mp,1}, L^2(S_1); T_{\mp,1}(\mathcal E_\mp)\, d\mathcal E_\mp) \cap C_{\alpha}(I_{\mp,1}, L^2(S_1)) \oplus\\ [0.2cm] L^2(I_{\mp,1}, L^2(S_1); T_{\mp,1}(\mathcal E_\mp)\, d\mathcal E_\mp ) \cap C_{\alpha}(I_{\mp,1}, L^2(S_1)).
\end{array}
\ene
We first consider the case of $M_1(z).$
\begin{theorem}\label{theom1}
 Suppose that Assumption~\ref{assum0} is satisfied, that    $h'_\pm(v^2/2) v^2 $  is  a integrable function of $v\in \mathbb R,$ and that $h_\pm(\lambda)$ have a continuous second derivative that fulfills,
\beq\label{6.18qww}
|h''_\pm(\lambda)| \leq C \frac{1}{1+\lambda}, \qquad \lambda \geq 0.
\ene
Let $\alpha, \rho,$ and $\xi$  satisfy,  $0<\alpha\leq 1,$   $0 <\rho \leq 1,$ and $ 0< \xi \leq 1.$ Then for
  $\gamma < \min (1/4, \xi/2)$ we have.
\begin{enumerate}
\item[\rm (a)]
 For $ z \in \mathbb C\setminus \{0\}$  $M_1(z)$ is a bounded operator from  ${\mathcal H}_{\alpha, \xi}$ into  ${\mathcal H}_{\gamma, \rho}.$ Moreover, the function $z \in \mathbb C\setminus\{0\} \to M_1(z)$  with values in $\mathcal B ({\mathcal H}_{\alpha, \xi},{\mathcal H}_{\gamma, \rho})$ is analytic. 
 \item[\rm(b)] The operator $I+M_1(z)$ is invertible in   ${\mathcal H}_{\gamma, \xi},$  with a bounded inverse, for $ z \in \mathbb C\setminus\{0\}.$ Further,   for $ G=(G_-,G_+,F)^T \in {\mathcal H}_{\gamma, \xi}$ we have
  \beq\label{6.18q1}
(I+  M_1(z))^{-1} G=  G + \begin{pmatrix} (Q_1(z) G)_-(\mu,\mathcal E_-)\\ (Q_1(z) G)_+(\mu,\mathcal E_+)\\0
\end{pmatrix},
\ene
where
  \beq\label{6.18q2}
  (Q_1(z)G)_\mp(\mu, \mathcal E_\mp)=\begin{pmatrix} \ds\mp \frac{ i}{z} v(\mathcal E_\mp, \mu)  \sqrt{|h'_\mp\left(\mathcal E_\mp\right)|} F(x(\mathcal E_\mp,\mu))\\[.3cm] \mp \ds \frac{ i}{z} v(\mathcal E_\mp, \mu)  \sqrt{|h'_\mp\left(\mathcal E_\mp\right)|} F(x(\mathcal E_\mp,\mu))\\[.3cm]
   \ds \mp \frac{ i}{z} v(\mathcal E_\mp, \mu)  \sqrt{|h'_\mp\left(\mathcal E_\mp\right)|} F(x(\mathcal E_\mp,\mu))
  \end{pmatrix}.
  \ene
 \end{enumerate}
 \end{theorem}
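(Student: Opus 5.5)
The plan is to exploit the very simple structure of $M_1(z)$. It acts only on the field component, and its range has vanishing field component.

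\textbf{Step 1: explicit form of $M_1(z)$.} By \eqref{5.16}, $\hat{\mathcal A}_0$ acts as zero on the third component $L^2_0((-P/2,P/2))$. Hence $R_{\hat{\mathcal A}_0}(z)$ multiplies $F$ by $(0-z)^{-1}=-1/z$. Composing with $\hat{\mathcal V}_1$ from \eqref{6.18d}, \eqref{6.18e}, and writing $\mathcal V_1$ in the energy-angle variables through $\mathbf V$, gives \eqref{6.18f1}, \eqref{6.18f2}. We therefore get
$$M_1(z)=-\frac{1}{z}\,K, \qquad K G:=\big((K F)_-,(K F)_+,0\big)^T,$$
with $(KF)_\mp$ of the form $\pm i\, v(\mathcal E_\mp,\mu)\sqrt{|h'_\mp(\mathcal E_\mp)|}\,F(x(\mathcal E_\mp,\mu))$ in each of the three regions. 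Part (a) then reduces to showing that $F\mapsto KF$ is bounded from $\hat C_{\xi,0}((-P/2,P/2))$ into ${\mathcal H}_{-,\gamma}\oplus{\mathcal H}_{+,\gamma}$. The third component of the image is zero, so the exponent $\rho$ plays no role. Analyticity on $\mathbb C\setminus\{0\}$ is then immediate from the factor $-1/z$.

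\textbf{Step 2: the weighted $L^2$ part.} Undo the change of variables using \eqref{1.111}, \eqref{1.116}, \eqref{1.122}, \eqref{1.128}, \eqref{1.139}, \eqref{1.145}. This gives
$$\|KF\|^2_{L^2}\le \sup|F|^2 \int_{\mathbb T_P\times\mathbb R} v^2\,|h'_\mp(\mathcal E_\mp(x,v))|\,dx\,dv,$$
and the right-hand side is finite by the hypothesis that $h'_\pm(v^2/2)v^2$ is integrable and the boundedness of $\varphi_0$. Boundedness of $\mathcal E\mapsto \|(KF)(\cdot,\mathcal E)\|_{L^2(S_1)}$ follows similarly, because $v\sqrt{|h'|}$ stays bounded as $\mathcal E\to\infty$. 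Here I would use \eqref{6.18qww} together with integrability to get $|h'(\lambda)|\lambda$ bounded.

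\textbf{Step 3: Hölder continuity in $\mathcal E_\mp$ with values in $L^2(S_1)$, of order $\gamma$.} This is the main obstacle. I would split the integrand as the product of three factors.
\begin{enumerate}
\item[(i)] $\sqrt{|h'_\mp(\mathcal E)|}$ is Hölder of order $1/2$, since $h'$ is $C^1$ by \eqref{6.18qww}.
\item[(ii)] $\mathcal E\mapsto F(x(\mathcal E,\cdot))$ in $L^2(S_1)$. Its Hölder exponent is $\xi$ times the $L^2(S_1)$-Hölder exponent of $\mathcal E\mapsto x(\mathcal E,\cdot)$.
\item[(iii)] $\mathcal E\mapsto v(\mathcal E,\cdot)$ in $L^2(S_1)$.
\end{enumerate}
For (ii) and (iii) I would use the representation $x(\mu,\mathcal E)=X(\mu T(\mathcal E),\mathcal E)$ from \eqref{1.110}, \eqref{1.115}, \eqref{1.132b}, \eqref{1.138}, together with the properties of $T_{\mp,0}$ and $T_{\mp,1}$ from Proposition~\ref{proper}. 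The expected bounds are:
\begin{itemize}
\item away from the elliptic points and the separatrices, everything is Lipschitz;
\item near an elliptic point, $x-x_{\rm e}\sim\sqrt{\mathcal E-\mathcal E_{\rm e}}$, which gives exponent $1/2$;
\item near a separatrix, $T$ has a logarithmic singularity and the turning points behave like square roots. Estimating the $L^2(S_1)$ difference by splitting $S_1$ into the part near the turning or hyperbolic point and its complement should still give any exponent below $1/2$ for $x$, and below $1/4$ after combining with $v$ and $\sqrt{|h'|}$.
\end{itemize}
This is exactly where the restriction $\gamma<\min(1/4,\xi/2)$ comes from. I would prove these estimates first as a lemma, in the spirit of Appendix~\ref{apex}, and then multiply the Hölder factors.

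\textbf{Step 4: part (b).} For any $G$, $M_1(z)G$ has zero field component, while $M_1(z)$ depends only on the field component of its argument. Hence $M_1(z)^2=0$, and
$$(I+M_1(z))^{-1}=I-M_1(z).$$
By Step 3 with $\rho=\xi$ and input exponent $\gamma$, the operator $M_1(z)$ maps ${\mathcal H}_{\gamma,\xi}$ boundedly into itself. The condition $\gamma<\min(1/4,\xi/2)$ involves only $\xi$ for the Hölder gain, not the input $\alpha$. So $I-M_1(z)$ is a bounded two-sided inverse on ${\mathcal H}_{\gamma,\xi}$, and $Q_1(z)=-M_1(z)$. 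Written out with the sign conventions of \eqref{6.18f2}, this yields \eqref{6.18q1}, \eqref{6.18q2}.
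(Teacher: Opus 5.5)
\textbf{What matches the paper.} Your Steps 1 and 4 are correct and are what the paper does. $R_{\hat{\mathcal A}_0}(z)$ multiplies the field component by $-1/z$. $M_1(z)$ reads only the field component and returns zero field component. Hence $M_1(z)^2=0$ and $(I+M_1(z))^{-1}=I-M_1(z)$, which is the ``simple calculation'' behind \eqref{6.18q1}.

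A sign remark: computed directly, $(M_1(z)G)_\mp=\pm\frac{i}{z}\,v\sqrt{|h'_\mp(\mathcal E_\mp)|}\,F(x)$, so $-M_1(z)$ is exactly \eqref{6.18q2}. Since \eqref{6.18f2} carries the opposite sign, do not invoke its conventions.

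Your Step 3 is also the paper's route. The paper asserts the uniform bound \eqref{6.18r} for every $\delta<1/2$ and then composes H\"older factors. It treats large energies separately via \eqref{6.18r3}--\eqref{6.18qaaa}; you would need that too, since $v$ is unbounded there and the factors cannot simply be multiplied.

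\textbf{The main gap: Step 3 near the separatrix.} No lemma can fill this part of Step 3. The logarithmic divergence of the period does not just cost an $\varepsilon$ in the exponent; it destroys H\"older continuity.

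Let $G=(0,0,F)^T$ with $F(x)=\sin(2\pi x/P)\in\hat C_{\xi,0}((-P/2,P/2))$, let $\mathcal E_s:=-\varphi_0(P/2)$, and assume $h'_-(\mathcal E_s)\neq 0$ (automatic if $h'_-<0$). For $\mu\in[0,1/4]$ the trapped electron orbit runs from $x_-(\mathcal E)$ to $0$ with $v\ge 0$. Using $d\mu=dt/T_{-,0}(\mathcal E)$ and $v\,dt=dx$, you get
\[
\int_0^{1/4}(M_1(z)G)_{-,0}(\mu,\mathcal E)\,d\mu=\frac{i}{z}\,\frac{\sqrt{|h'_-(\mathcal E)|}}{T_{-,0}(\mathcal E)}\int_{x_-(\mathcal E)}^{0}F(x)\,dx .
\]
The factors $\sqrt{|h'_-(\mathcal E)|}$ and $\int_{x_-(\mathcal E)}^{0}F\,dx$ (which tends to $-P/\pi$) are H\"older of order $1/2$ and nonzero near $\mathcal E_s$. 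However, by \eqref{ap.3}, for $\mathcal E_1=\mathcal E_s-\epsilon$ and $\mathcal E_2=\mathcal E_s-2\epsilon$,
\[
\bigl|T_{-,0}(\mathcal E_1)^{-1}-T_{-,0}(\mathcal E_2)^{-1}\bigr|\sim \frac{\sqrt{\varphi_0''(P/2)}\,\ln 2}{2\ln^2(1/\epsilon)} .
\]
The left-hand side of the first display changes by at most $\frac12\|(M_1(z)G)_{-,0}(\cdot,\mathcal E_1)-(M_1(z)G)_{-,0}(\cdot,\mathcal E_2)\|_{L^2(S_1)}$. So this $L^2(S_1)$-increment is $\gtrsim(\ln\frac1\epsilon)^{-2}$, which is not $O(\epsilon^\gamma)$ for any $\gamma>0$.

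Consequences:
\begin{itemize}
\item \eqref{6.18r}, and the $L^2(S_1)$ version you plan to prove, fail near the separatrix. The same happens in $I_{-,1}$ by \eqref{ap.4}, and for the ions.
\item With $C_\gamma$ meaning global H\"older continuity on $I_{\mp,0}$ and $I_{\mp,1}$, part (a) is false as stated. Moreover $I+M_1(z)$ does not even map $\mathcal H_{\gamma,\xi}$ into itself, so (b) fails too.
\item The fix is a change of spaces, e.g.\ H\"older control only away from the separatrix energies, or a weighted or logarithmic modulus there. Theorems~\ref{theolim} and \ref{invert} would then have to be rechecked.
\end{itemize}

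\textbf{A smaller gap in Step 2.} \eqref{6.18qww} together with integrability of $h'_\pm(v^2/2)v^2$ does not give $\sup_\lambda \lambda|h'_\pm(\lambda)|<\infty$. Smooth bumps of height $4^{-4j/5}$ and width $4^{j/5}$ centred at $\lambda=4^j$ satisfy both hypotheses. Without that bound, for $F\neq 0$, $\|(M_1(z)G)_{\mp,1,\pm}(\cdot,\mathcal E)\|_{L^2(S_1)}$ grows like $(\mathcal E|h'_\mp(\mathcal E)|)^{1/2}$. The boundedness built into $C_\gamma$ therefore needs an extra hypothesis.
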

\begin{proof}
We remark that    for any $ 0 < \delta < 1/2$  we have,
\beq\label{6.18r}
| x({\mathcal E}_{\pm,2}, \mu)-x({\mathcal E}_{\pm,1}, \mu) |\leq C   |{\mathcal E}_{\pm,2}-{\mathcal E}_{\pm,1}|^\delta,  
\ene
for $ {\mathcal E}_{\pm, 1}$ and  ${\mathcal E}_{\pm,2}$ in $I_{\pm,0}$ or    ${\mathcal E}_{\pm,1}$ and  ${\mathcal E}_{\pm,2}$ in $I_{\pm,1},$  and  with a fixed constant $C$ independent of ${\mathcal E}_{\pm,1}, $        ${\mathcal E}_{\pm, 2}$ and  $\mu.$ This result is proved as in Lemma 3.5 of \cite{hrss}, see also Proposition A.3 of \cite{wegal}. 
 Note that in the right-hand side of \eqref{6.18r}  the exponent $\delta$ is smaller than $1/2$ instead of $1/2$ as in  \cite{hrss} and \cite{wegal}. This is so because of the logarithmic divergence of the period functions stated  in (a), (b), (f), and (g) of Proposition~\ref{proper}. 
 By \eqref{6.18r} and recalling that $v=\pm \sqrt{2(\mathcal E_\mp \pm \varphi_0(x)},$  
  it follows that for any $\Delta>0,$ for $ \mathcal E_{\pm,2}> \mathcal E_{\pm,1},$ and  $1 \geq\mathcal E_{\pm,2}-\mathcal E_{\pm,1}\geq 0$ we have, 
   
  \beq\label{6.18r2}
  \left| v(\mathcal E_{\pm,2}, \mu) \sqrt{|h'_\pm(\mathcal E_{\pm,2})|}- v(\mathcal E_{\pm,1}, \mu) \sqrt{|h'_\pm(\mathcal E_{\pm,1})|}\right| \leq C |\mathcal E_{\pm,2}-\mathcal E_{\pm,1)}|^{\delta/2},  \mathcal E_{\pm,2} \leq \Delta+1. 
  \ene 
   Further, let us take $\Delta >1$ such that
  $    0 \leq \mathcal E_\pm \mp \varphi_0(x) \leq C \mathcal E_\pm$ for $ \mathcal E_\pm \geq \Delta-1 >0.$
  Then, for $ \mathcal E_{\pm,2}> \mathcal E_{\pm,1},$ and  $1 \geq\mathcal E_{\pm,2}-\mathcal E_{\pm,1}\geq 0
  $  we have,
  \beq\label{6.18r3}\begin{array}{l}
    \left| v(\mathcal E_{\pm,2}, \mu) \sqrt{|h'_\pm(\mathcal E_{\pm,2})|}- v(\mathcal E_{\pm,1}, \mu) \sqrt{|h'_\pm(\mathcal E_{\pm,1})|}\right| \leq C |\mathcal E_{\pm,2}-\mathcal E_{\pm,1}|^{\delta/2}+\\[.3cm]
    C\, \sqrt{  v(\mathcal E_{\pm,1}, \mu)^2 |h'_\pm(\mathcal E_{\pm,2})- h'_\pm(\mathcal E_{\pm,1})|},\qquad
    \mathcal E_{\pm,2} \geq \Delta+1,
    \end{array}  \ene
    where we used \eqref{6.18r}.
   Moreover, by \eqref{6.18qww},
   \beq\label{6.18qwww}
     |v(\mathcal E_{\pm,1}, \mu)^2 (h'_\pm(\mathcal E_{\pm,2})- h'_\pm(\mathcal E_{\pm,1}))|\leq C|\mathcal E_{\pm,2}-\mathcal E_{\pm,1}|.
 \ene 
 Further, by \eqref{6.18r3} and \eqref{6.18qwww},
\beq\label{6.18qxxxa}
 \left| v(\mathcal E_{\pm,2}, \mu) \sqrt{|h'_\pm(\mathcal E_{\pm,2})|}- v(\mathcal E_{\pm,1}, \mu) \sqrt{|h'_\pm(\mathcal E_{\pm,1})|}\right| \leq C  |\mathcal E_{\pm,2}-\mathcal E_{\pm,1}|^{\delta/2},\qquad   \mathcal E_{\pm,2} \geq \Delta.
 \ene
Hence, by \eqref{6.18r2} and \eqref{6.18qxxxa},
\beq\label{6.18qaaa}
 \left| v(\mathcal E_{\pm,2}, \mu) \sqrt{|h'_\pm(\mathcal E_{\pm,2})|}- v(\mathcal E_{\pm,1}, \mu) \sqrt{|h'_\pm(\mathcal E_{\pm,1})|}\right| \leq C  |\mathcal E_{\pm,2}-\mathcal E_{\pm,1}|^{\delta/2}, 
   |\mathcal E_{\pm,2}-\mathcal E_{\pm,1}|\leq 1.
 \ene
 Moreover, by \eqref{6.18r} for $ F \in \hat{C}_{\xi,0}((-P/2,P/2)),$ 
 \beq\label{6.18qbbb}
 |F(x(\mathcal E_{\pm,2},\mu))- F(x(\mathcal E_{\pm,1}, \mu))| \leq C |\mathcal E_{\pm,2}- \mathcal E_{\pm,1}|^{\xi\delta}.
 \ene
It follows from   \eqref{6.18f1}, \eqref{6.18f2}, \eqref{6.18qaaa}, and \eqref{6.18qbbb}  that $M_1(z)$ is a bounded operator from  ${\mathcal H}_{\alpha, \xi}$ into  ${\mathcal H}_{\gamma, \rho}.$ 
Moreover, the analyticity of the function  $z \in \mathbb C\setminus\{0\} \to M_1(z)$  with values in $\mathcal B ({\mathcal H}_{\alpha, \xi},{\mathcal H}_{\gamma, \rho})$ is immediate from \eqref{6.18f1} and \eqref{6.18f2}. 
  Let us prove (b). That $I+M_1(z)$ is invertible and that the inverse is given by \eqref{6.18q1} and \eqref{6.18q2} follows from   \eqref{6.18f1}, \eqref{6.18f2} and a simple calculation. We prove that the operator  \eqref{6.18q1}, \eqref{6.18q2} is bounded in  ${\mathcal H}_{\gamma, \xi}$ as in the proof of (a).
  
  \end{proof}
In the following theorem we prove that $M_2(z),$  with $z \in \mathbb C_\pm,$ is compact from $\mathcal H_{\alpha, \xi}$ into $ \mathcal H_{\eta, \rho},$ for appropriate $\alpha, \xi, \eta,$ and $\rho,$ and that it has continuous limits in operator norm  as $z$ tends to $\mathbb R$ from above and from below.
We designate,
\beq\label{6.17}
\Gamma:= \mathbb R \setminus \left[\cup_{n \in \mathbb Z_0} \left\{ \frac{2 \pi n}{T_{-,0}(-\varphi_0(0))},  \frac{2 \pi n}{T_{+,0}(\varphi_0(P/2))} \right\} \cup \{0\}\right].
\ene

\begin{theorem}\label{theolim} Suppose that Assumption~\ref{assum0} is satisfied, that $h_\pm$ are twice continuously differentiable,  and that    $h'_\pm(v^2/2) v^2 $  is  a integrable function of $v\in \mathbb R.$ Moreover, assume  that $0 <\alpha\leq 1$, that
 $0 <\xi\leq 1,$ that $0 <\eta\leq 1,$ and that  that $0 < \rho < 1/2.$
Then.
\begin{enumerate}        
   \item[\rm (a)]   For $ z\in \mathbb C_\pm,$ the operator   $ M_2(z)$ is compact from  $\mathcal H_{\alpha, \xi}$ into $ \mathcal H_{\eta, \rho}.$ Further,  the function $ z\in \mathbb C_\pm  \to   M_{2}(z)$ with values in  $ \mathcal B( \mathcal H_{\alpha, \xi},  \mathcal H_{\eta, \rho})$
 are analytic for $ z \in \mathbb C_\pm.$ Moreover,
 
 \beq\label{6.19c}  
\lim_{ a\to \pm\infty} \|M_2(ia)\|_{ \mathcal B( \mathcal H_{\alpha, \xi},  \mathcal H_{\eta, \rho})}=0.
\ene
   \item[\rm (b)] If, furthermore, $ 0 < \rho < 1/4,$ the following limits exist in the uniform operator topology in $ \mathcal B(\mathcal H_{\alpha, \xi},  \mathcal H_{\eta, \rho}).$
\beq\label{6.19}
M_{2}(\lambda\pm i0):= \lim_{\varepsilon \downarrow 0} M_2(\lambda \pm i \varepsilon),
\qquad \lambda \in \Gamma,
\ene
and the convergence is uniform in any closed and bounded  interval contained in $\Gamma.$
\item[\rm (c)] We define,
\beq\label{6.19b}
M_{2,\pm}(z):= \left\{\begin{array}{l}     M_2(z), \qquad z \in \mathbb C_\pm,\\
 M_{2}(z \pm i 0), \qquad z \in \Gamma. 
 \end{array}\right.
 \ene
Then,  the functions $ z\in \mathbb C_\pm \cup \Gamma \to   M_{2,\pm}(z)$ with values in  $ \mathcal B( \mathcal H_{\alpha, \xi},  \mathcal H_{\eta, \rho})$
 are analytic for $ z \in \mathbb C_\pm$ and continuous for   $z\in \mathbb C_\pm \cup \Gamma.$
\item[\rm (d)]
For $z \in \Gamma$ 
the operators  $M_{2,\pm} (z)$ are compact from $\mathcal H_{\alpha, \xi}$ into $ \mathcal H_{\eta, \rho}.$

 \end{enumerate} 
\end{theorem}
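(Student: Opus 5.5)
Since $M_2(z)G=(0,0,H)^T$ with $H$ given by \eqref{6.18i}--\eqref{6.18o}, everything reduces to studying the map $G\mapsto H$ from $\mathcal H_{\alpha,\xi}$ into $\hat C_{\rho,0}((-P/2,P/2))$. The component $F$ of $G$ does not enter $H$ at all, and the $\mathcal H_{\eta,\rho}$ norm of $M_2(z)G$ is just $\|H\|_{\hat C_\rho}$. So the plan is to treat each of the four pieces $H_{\nu,0}$, $H_{\nu,1}$ separately. In each piece we change variables in the integrals to the spectral parameter, $\lambda=\mathcal E_{\nu,j,n}(\beta)$, using the inverse functions \eqref{5.51}, \eqref{5.55} and the Jacobians $p_{\nu,j,n}$ from \eqref{5.53}, \eqref{5.57}. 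The $n$-th term then becomes
\begin{equation*}
\int \frac{K_{\nu,j,n}(x,\beta)}{2\pi n/T-z}\,d\beta \;\propto\; \int \frac{K_{\nu,j,n}(x,\beta)}{\beta-z}\,d\beta ,
\end{equation*}
that is, a Cauchy integral. Its density has the form
\begin{equation*}
K_{\nu,j,n}(x,\beta)=\sqrt{|h'_\nu|}\;(\text{angle function of }x)\;b_{\nu,j,n}\,T\,|p_{\nu,j,n}|,
\end{equation*}
evaluated at $\lambda=\mathcal E_{\nu,j,n}(\beta)$.

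For part (a) we take $z\in\mathbb C_\pm$ and work before any limit. The factor $(2\pi n-zT)^{-1}$ is bounded by $C/(|n|+1)$, uniformly in $\lambda$ on compacts in $z$. Together with Cauchy--Schwarz in $n$ (the $b_n$ are Fourier coefficients in $L^2(S_1)$) and in $\lambda$ (using the integrability of $h'_\pm(v^2/2)v^2$, i.e.\ of $|h'|T$ weighted appropriately), this gives boundedness in the sup norm. The $x$-dependence sits only in the bounded angle functions $\sin(2\pi n\theta)$, $e^{\pm 2\pi i n\theta}$ and $a_{+,0,n}$.

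For the Hölder bound in $x$ we use
\begin{equation*}
|e^{2\pi i n\theta(x_1,\lambda)}-e^{2\pi i n\theta(x_2,\lambda)}|\le C|n|^{\rho}|\theta(x_1,\lambda)-\theta(x_2,\lambda)|^{\rho}.
\end{equation*}
We also need a Hölder estimate $|\theta(x_1,\lambda)-\theta(x_2,\lambda)|\le C|x_1-x_2|^{1/2}$, obtained from the $1/\sqrt{\,\cdot\,}$ integrable singularity at the turning points. The extensions in Definition~\ref{defang} make $\theta$ continuous across the turning points. This is where the restriction $\rho<1/2$ comes from: the factor $|n|^\rho$ is absorbed by the decay $1/|n|$ of the resolvent factor, leaving a square-summable sequence.

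Compactness for $z\notin\mathbb R$ then follows by the same estimates with a slightly larger Hölder exponent $\rho'\in(\rho,1/2)$, together with the compact embedding $C_{\rho'}\subset\hat C_\rho$ (Proposition~A.1 of \cite{wegal} plus Arzelà--Ascoli). Analyticity is immediate from the analyticity of the summands, since the series converge uniformly. The limit \eqref{6.19c} holds because $|2\pi n-iaT|^{-1}\to0$ pointwise and is dominated, so dominated convergence applies in the bounds above.

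Part (b) is the main obstacle. We will use the Plemelj--Privalov theorem: if $K(x,\cdot)$ is Hölder continuous in $\beta$ with values in $C_{\rho}$ in $x$, then the Cauchy integral has boundary values that are Hölder continuous and locally uniform in $\lambda$. We therefore need Hölder continuity in $\beta$ of $K_{\nu,j,n}$, and this has four sources:
\begin{itemize}
\item $b_{\nu,j,n}\in C_\alpha$ in $\mathcal E$, from the definition of $\mathcal H_{\alpha,\xi}$;
\item $h'$ being $C^1$;
\item Proposition~\ref{proper}, for $T$, $p$ and the inverse functions away from the endpoints $\beta_{\nu,0,n,\max}$ and $0$, which is exactly why $\Gamma$ is removed;
\item the angle functions $\theta(x,\lambda)$, which are only Hölder in $\lambda$ with exponent $<1/2$ near the separatrix, because of the logarithmic divergence of the period (as in \eqref{6.18r}).
\end{itemize}
Interpolating the $x$-Hölder norm $\rho$ against this $\lambda$-Hölder regularity costs exponents, and the requirement that both survive should be what forces $\rho<1/4$.

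To sum over $n$ uniformly, for $\lambda$ in a compact interval of $\Gamma$ only finitely many trapped modes $n$ have $\lambda$ in their range. For the free modes $j=1$, for large $|n|$ the relevant energy $\mathcal E_{\nu,1,n}(\beta)$ is large, and there the decay of $h'$ and the factor $1/n$ control the tail. The tail is thus uniformly small in operator norm, and each finite part converges by Plemelj--Privalov.

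Parts (c) and (d) then follow: (c) from the uniform convergence in (b) together with (a), and (d) because $M_{2,\pm}(\lambda)$ is a norm limit of compact operators.
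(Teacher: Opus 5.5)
Your reduction to the scalar map $G\mapsto H$ and your treatment of (c) and (d) follow the paper. The argument for (b), however, has a genuine gap exactly where you treat it as routine: the summation over $n$. Both claims you use to discard the tail are false.

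\emph{Trapped modes.} By Proposition~\ref{proper} (a), (c), (d), $T_{-,0}$ increases from $2\pi/\sqrt{-\varphi_0''(0)}$ to $+\infty$. So the $n$-th trapped mode has range $(0,\beta_{-,0,n,\max})$ with $\beta_{-,0,n,\max}=n\,\beta_{-,0,1,\max}\to\infty$. For $I=[a,b]\subset\Gamma\cap(0,\infty)$, only the finitely many $n$ with $\beta_{-,0,n,\max}<a$ are non-resonant (the paper's $\tilde J$); the resonant set $J$ is infinite.

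\emph{Free modes.} $T_{\nu,1}$ decreases from $+\infty$ to $0$, so every $n\geq 1$ is resonant at every $\lambda>0$. Moreover $\mathcal E_{\nu,1,n}(\lambda)=T_{\nu,1}^{-1}(2\pi n/\lambda)$ tends to the separatrix energy, not to $+\infty$, as $n\to\infty$; the trapped resonant energies accumulate there too.

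Hence neither non-resonance nor decay of $h'$ at high energy makes the tail small. Convergence of each finite partial sum by Plemelj--Privalov says nothing about $M_2(\lambda\pm i\varepsilon)$ in operator norm. The missing ingredient is the $n$-uniform, $n$-decaying control of the Plemelj densities established in \eqref{6.40}--\eqref{6.47}:
\begin{itemize}
\item $|p_{-,0,n}|\le C/n$, with Lipschitz constants $C/n$ and $C/n^{2}$ for $\mathcal E_{-,0,n}$ and $p_{-,0,n}$ on the support of the cutoff $\psi$;
\item combined with the $\frac12$-H\"older dependence of $\theta_{-,0}$ on the energy, this gives $\beta$-H\"older constants $O(n^{-1/2-\Delta/2})$ for $k_{-,0,n}$;
\item after interpolation with the $x$-regularity, one gets the mixed bound $C\,n^{-1/2-\varpi_1}|\beta-\kappa|^{\varpi_2}|x_1-x_2|^{\rho}$.
\end{itemize}
Decay faster than $n^{-1/2}$ is what lets the decomposition \eqref{6.43} be summed over infinitely many resonant $n$, uniformly in $\varepsilon$ and $\kappa\in[a,b]$. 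This interpolation is also where $\rho<1/4$ actually comes from, not a vague exponent-counting heuristic. Because the resonant energies accumulate at the separatrix, where the period diverges, this uniformity is the heart of (b).

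Your H\"older estimate in (a) also does not deliver what (a) states. A bound $|\theta(x_1,\lambda)-\theta(x_2,\lambda)|\le C|x_1-x_2|^{1/2}$ uniform in $\lambda$ is false. As $\lambda\downarrow-\varphi_0(0)$, the whole range $[0,1/2]$ of $\theta_{-,0}(\cdot,\lambda)$ is covered on $[x_-(\lambda),x_+(\lambda)]$, whose length tends to $0$. Near the separatrix, the turning points approach the hyperbolic points where $\varphi_0'=0$. Even granting the bound, $|n|^{\rho}|\theta_1-\theta_2|^{\rho}$ gives only $|x_1-x_2|^{\rho/2}$, so after the $n$-summation you reach exponents below $1/4$, not below $1/2$.

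The paper instead uses the Lipschitz bound with an integrable singular weight \eqref{6.26}. It integrates this in $\lambda$, using the sup-in-energy control of the $b_{-,0,n}$ provided by the $C_\alpha$ part of the norm. It handles separately the energy strip between $-\varphi_0(x_1)$ and $-\varphi_0(x_2)$ (the term \eqref{6.25}). This gives every exponent below $1/2$.

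With that repaired, your compactness argument is fine: boundedness into $C_{\rho'}$ for some $\rho<\rho'<1/2$, plus the compact embedding $C_{\rho'}\subset\hat C_{\rho}$. It is a valid and shorter alternative to the paper's truncation in $n$ followed by Arzel\`a--Ascoli on the finitely many coefficients $b_{-,0,n}$, $|n|\le N$.
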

\begin{proof}   We consider \eqref{6.18h} and \eqref{6.18i}. We give our proofs for the term $H_{-,0}(x)$ in \eqref{6.18j}. The remaining terms in \eqref{6.18i} are studied in a similar way. We write  $H_{-,0}(x)$ as follows,
\beq\label{6.19d}\begin{array}{l}
H_{-,0}(x)=- 2\mathcal I \int_{-\varphi_0(0)}^{-\varphi_0(-P/2)} \sqrt{|h'_-(\lambda)|}   \left( \sum_{n \in \mathbb Z_0} \ds\frac{1}{\sqrt{2\pi}} \sin[ 2\pi n \theta_{-,0}(x, \lambda)]   \ds g_{n}(z,\lambda)\right.\\[.3cm]  \left.  \ds b_{-,0,n}(  \lambda) T_{-,0}(\lambda) \right)d\lambda,
\end{array}
\ene
where,
\beq\label{6.20}
g_{n}(z,\lambda):=  \ds\frac{1}{2\pi n-z  {T_{-,0}(\lambda)} }.
\ene
We have,
\beq\label{6.20b}
g_{n}(z,\lambda):= \frac{1}{2\pi n- i  T_{-,0}(\lambda) }+ (z-i)    \ds\frac{1}{\frac{2\pi n}{T_{-,0}(\lambda)}-z}\, \frac{1}{2\pi n- iT_{-,0}(\lambda)}.
\ene
Note that by \eqref{6.20b},
\beq\label{6.21}
\left| g_n(z,\lambda)  \right| \leq  C \frac{1}{1+|n|} \left(1+\frac{1+|z|}{|{\rm Im z}|}\right)    , \qquad \lambda \in I_{-,0}, n \in \mathbb Z.
\ene
 Further, by \eqref{6.19d}, \eqref{6.21},  (a), (c),  and (d) of Proposition~\ref{proper},
\beq\label{6.22}
\left| H_{-,0}(x)\right| \leq C \|G\|_{\mathcal H_{\alpha, \xi}}, \qquad x \in [-P/2, P/2],
\ene
where the constant $C$ is uniform in compact sets of $\mathbb C_\pm.$ Moreover, assume that $ \varphi_0(x_2)\leq \varphi_0(x_1).$ Hence,
\beq\label{6.23}
H_{-,0}(x_2)- H_{-,0}(x_1)= H_{-,0}^{(1)}(x_1,x_2)+  H_{-,0}^{(2)}(x_1,x_2),
\ene
where
\beq\label{6.24}\begin{array}{l}
H_{-,0}^{(1)}(x_1,x_2)=- 2 \int_{-\varphi_0(x_2)}^{-\varphi_0(-P/2)} \sqrt{|h'_-(\lambda)|} \left(\sum_{n \in \mathbb Z_0} \ds\frac{1}{\sqrt{2\pi}}( \sin[ 2\pi n \theta_{-,0}(x_2, \lambda)]-\right.\\[.3cm]\left.\sin[ 2\pi n \theta_{-,0}(x_1, \lambda)])   \ds g_{n}(z,\lambda)  \ds b_{-,0,n}(  \lambda) T_{-,0}(\lambda) \right )d\lambda 
\end{array}
\ene 
and,
\beq\label{6.25}\begin{array}{l}
H_{-,0}^{(2)}(x_1,x_2)= 2 \int_{-\varphi_0(x_1)}^{-\varphi_0(x_2)} \sqrt{|h'_-(\lambda)|} \left(\sum_{n \in \mathbb Z_0} \ds\frac{1}{\sqrt{2\pi}}[ \sin[ 2\pi n \theta_{-,0}(x_1, \lambda)]   \ds g_{n}(z,\lambda)\right.\\[.3cm]  \left. \ds  b_{-,0,n}(  \lambda) T_{-,0}(\lambda)\right) d\lambda.
\end{array}
\ene
For $ -\varphi_0(x_2) \leq \lambda \leq -\varphi_0(-P/2),$ we have,
\beq\label{6.26}
 |\theta_{-,0}(x_2, \lambda)-\theta_{-,0}(x_1, \lambda)|\leq C \frac{1}{\sqrt{\lambda+\varphi_0(x_2)}} |x_1-x_2|.
 \ene
 Using \eqref{6.26} in \eqref{6.24}, (a), (c),  and (d) of Proposition~\ref{proper} we see that for any $0 < \varepsilon < 1/2$ there is a constant $C$ such that,
  \beq\label{6.27}
  |H_{-,0}^{(1)}(x_1,x_2)| \leq C |x_1-x_2|^\varepsilon \| G\|_{\mathcal H_{\alpha, \xi}}, \qquad x \in [-P/2, P/2],
  \ene
  where the constant $C$ is uniform in compact sets of $\mathbb C_\pm.$ Moreover, by \eqref{6.25}, (a), (c),  and (d) of Proposition~\ref{proper}, for any $ 0  < \varepsilon <1,$
    \beq\label{6.28}
  |H_{-,0}^{(2)}(x_1,x_2)| \leq C |x_1-x_2|^\varepsilon \| G\|_{\mathcal H_{\alpha, \xi}}, \qquad x \in [-P/2, P/2],
  \ene
   where the constant $C$ is uniform in compact sets of $\mathbb C_\pm.$ Estimating the remaining terms in \eqref{6.18i} as in  \eqref{6.22}-\eqref{6.25}, \eqref{6.27}, \eqref{6.28}, and using that for any $\tilde{\rho} > \rho,$
   we have that  $C_{\tilde{\rho}}(I_{-,0}, L^2(S_1)) \subset  \hat{C}_{{\rho}}(I_{-,0}, L^2(S_1))$ with a continuous imbedding  (see Proposition A.1 of \cite{wegal}) we prove that for $z \in \mathbb C_\pm$  the operator $M_2(z)$ is  bounded
   from $\mathcal H_{\alpha,\xi}$ into  $\mathcal H_{\eta,\rho}.$ Moreover, the analyticity of $M_2(z)$ follows derivating \eqref{6.19d} under the integral sign,  proceeding as in the proof of the boundedness of $H_{-,0},$ and estimating the remainig terms in   \eqref{6.18i} similarly.
    Equation \eqref{6.19c} follows observing that for $ a \in \mathbb R \setminus \{0\},$
    $$
    | g_n(ia) | \leq \frac{1}{2\pi n+|a|},
    $$
    estimating  $H_{-,0}(x)$  as in  \eqref{6.22}-\eqref{6.25}, \eqref{6.27}, \eqref{6.28},
   and estimating the remaining terms in \eqref{6.18i} in a similar way.
   Let us prove that  $M_2(z) $  is  compact. For this purpose, we prove that each one of the terms in \eqref{6.18i} defines a  compact operator from $\mathcal H_{\alpha,\xi}$ into $\hat{C}_{\rho,0}(-P/2,P/2).$ Let us consider $H_{-,0}(x).$ We denote,
   \beq\label{6.29}
  \begin{array}{l}
H_{-,0,N}(x)=- 2\mathcal I \int_{-\varphi_0(0)}^{-\varphi_0(-P/2)} \sqrt{|h'_-(\lambda)|} \left(\sum_{|n| > N} \ds\frac{1}{\sqrt{2\pi}} \sin[ 2\pi n \theta_{-,0}(x, \lambda)]   \ds g_{n}(z,\lambda)\right.\\[.3cm] \left.   \ds b_{-,0,n}(  \lambda) T_{-,0}(\lambda)\right) d\lambda.
\end{array}
\ene
Estimating as in \eqref{6.22}-\eqref{6.25}, \eqref{6.27}, and \eqref{6.28} we prove that,
\beq\label{6.30}
\lim_{N \to \infty} \|H_{-,0,N}\|\ds_{\mathcal B ( \mathcal H_{\alpha, \beta}, \hat{C}_{\rho,0}(-P/2,P/2))}=0.
\ene 
Then , it is sufficient to consider,
\beq\label{6.31}
 \begin{array}{l}
H_{-,0,\tilde{N}}(x)=- 2\mathcal I \int_{-\varphi_0(0)}^{-\varphi_0(-P/2)} \sqrt{|h'_-(\lambda)|} \left(
\sum_{n \in \mathbb Z_0: |n| \leq N} \ds\frac{1}{\sqrt{2\pi}} \sin[ 2\pi n \theta_{-,0}(x, \lambda)]   \ds g_{n}(z,\lambda)\right.\\[.3cm] \left.   \ds\  b_{-,0,n}(  \lambda) T_{-,0}(\lambda)\right) d\lambda.
\end{array}
\ene
 Suppose that $G^{(l)},$  for $l=1,2,\dots$ is a bounded sequence in $\mathcal H_{\alpha, \xi}.$ Then  the vector valued sequence $(b_{-,0,-N}^{(l)}(\lambda), \dots, b_{-,0,-1}^{(l)}(  \lambda), b_{-,0,1}^{(l)}(  \lambda),\dots, b_{-,0,N}^{(l)}(  \lambda))$ is   uniformly bounded and uniformly equicontinuous. Hence, by the  Arzel\`a--Ascoli theorem, Theorem 7.25 of \cite{ru}, it has a subsequence, that we also denote by  $(b_{-,0,-N}^{(l)}(  \lambda), \dots,b_{-,0,-1}^{(l)}(  \lambda), b_{-,0,1}^{(l)}(  \lambda),\dots, b_{-,0,N}^{(l)}(  \lambda)),$  that converges uniformly in $\overline{I_{-,0}}$ to a continuous vector valued function  $(b_{-,0,-N}(  \lambda), \dots,\linebreak b_{-,0,-1}(  \lambda), b_{-,0,1}(  \lambda),\dots, (b_{-,0,N}(  \lambda)).$   
 Moreover, as in \eqref{6.22}-\eqref{6.25}, \eqref{6.27}, and \eqref{6.28} we prove that
  \beq\label{6.32}
  \begin{array}{l}  \lim_{l \to \infty} 
- 2\mathcal I \int_{-\varphi_0(0)}^{-\varphi_0(-P/2)} \sqrt{|h'_-(\lambda)|} \left(
\sum_{n \in \mathbb Z_0: |n| \leq N} \ds\frac{1}{\sqrt{2\pi}} \sin[ 2\pi n \theta_{-,0}(x, \lambda)]   \ds g_{n}(z,\lambda)\right.\\[.3cm] \left.  \ds (b_{-,0,n}^{(l)}(\lambda)- b_{-,0,n}( \lambda)) T_{-,0}(\lambda)\right) d\lambda=0,
\end{array}
\ene
with the limit in the norm of $\hat{C}_{\rho,0}(-P/2,P/2).$
This completes the prof that the operator defined by $H_{-,0}(x)$ is compact . We   prove that the remaining terms in \eqref{6.18i} define  compact operators from $\mathcal H_{\alpha,\xi}$ into $\hat{C}_{\rho,0}(-P/2,P/2)$ in a similar way. Remark that in the case of $H_{-,1}(x)$ we first approximate it in norm by
$$
\begin{array}{l}
H_{-,1}^{(M)}(x):= i\mathcal I \int_{-\varphi_0(-P/2)}^M \sqrt{|h'_-(\lambda)|}\left( \sum_{n \in \mathbb Z} \ds\frac{1}{\sqrt{2\pi}} \ds\frac{1}{2\pi n- zT_{-,1}(\lambda)}\right.\\[.3cm] \left.
\left[  e^{i2\pi n \theta_{-,1}(x, \lambda)} b_{-,1,+,n} (  \lambda) -   e^{-i2\pi n \theta_{-,1}(x, \lambda)}   b_{-,1,-,n} (  \lambda) \right]    T_{-,1}(\lambda) \right)d\lambda,
\end{array}
$$
for $M$ large, and then, we proceed as in the case of $H_{-,0}.$ In the case of $H_{+,1}(x)$ we proceed in a similar way. This completes the proof of the compactness of $M_2(z).$ Let us prove (b). Let $I=[a,b]$ be a closed and bounded interval contained in  $\Gamma \cap (0,\infty).$ The case of $ I \subset \Gamma \cap (-\infty,0))$ is similar. 
We prove that the operator defined by each one of the terms in \eqref{6.18i} has continuous extensions from above and from below to $I.$ Let us consider the term $H_{-,0}(x).$ For clarity, we make explicit the dependence in $z$ and denote   $H_{-,0}(x,z).$  We designate
$$
J:=\left\{ n \in \mathbb N: I \cap \left(0, \frac{2\pi n}{T_{-,0}(-\varphi_0(0))}\right)\right \} \neq \emptyset, \qquad \tilde{J}= \mathbb N \setminus J. 
$$ 
We decompose $H_{-,0}(x,z)$ as follows
\beq\label{6.33}
H_{-,0}(x,z)= H_{-,0}^{(1)}(x,z)+H_{-,0}^{(2)}(x,z),
\ene
where, 
\beq\label{6.34}\begin{array}{l}
H_{-,0}^{(1)}(x,z)=- 2\mathcal I \int_{-\varphi_0(0)}^{-\varphi_0(-P/2)} \sqrt{|h'_-(\lambda)|} \left(   \sum_{n \in \tilde{J}} \ds\frac{1}{\sqrt{2\pi}} \sin[ 2\pi n \theta_{-,0}(x, \lambda)]   \ds g_{n}(z,\lambda)\right.\\[.3cm] \left.   \ds b_{-,0,n}(  \lambda) T_{-,0}(\lambda) \right)d\lambda,
\end{array}
\ene 
 and
 \beq\label{6.35}\begin{array}{l}
H_{-,0}^{(2)}(x,z)=- 2\mathcal I \int_{-\varphi_0(0)}^{-\varphi_0(-P/2)} \sqrt{|h'_-(\lambda)|} \left(\sum_{n \in {J}} \ds\frac{1}{\sqrt{2\pi}} \sin[ 2\pi n \theta_{-,0}(x, \lambda)]   \ds g_{n}(z,\lambda)\right.\\[.3cm] \left.  \ds b_{-,0,n}(  \lambda) T_{-,0}(\lambda) \right) d\lambda.
\end{array}
\ene 
 Remark that the distance,
 $$
 \text{\rm dist}\left( I, \cup_{n\in \tilde{J}} \left[0,\frac{2\pi n}{T_{-,0}(-\varphi_0(0))} \right]\right)  >0.
 $$
 Then, we prove as in the proof of (a)  that the operator defined by $H_{-,0}^{(1)}(x,z),$ for $z= \lambda\pm i\varepsilon,$ wtih $ \lambda \in I,$ extends  continuously to $ z=\lambda \in I$ in the operator norm in $\mathcal B(\mathcal H_{\alpha, \xi}, \hat{C}_{\rho,0})$ and that the convergence is uniform for $ \lambda \in I.$ Furthermore, the operator defined by  $H_{-,0}^{(1)}(x,z)$ is analytic for $ z= \lambda \in I.$  We now consider $H_{-,0}^{(2)}(x,z).$ Let $\psi \in C^1_0(\mathbb R)$ satisfy $ \psi(\lambda)=1,$ for $\lambda \in [a-\nu, b+\nu],$ for some $\nu$ such that $  [a-\nu, b+\nu] \subset \Gamma,$ and with the support of $ \psi$ contained in  $[a-\mu, b+\mu],$ for some $\mu$ such that $  [a-\mu, b+\mu] \subset \Gamma,$ and $\mu > \nu.$ We decompose  $H_{-,0}^{(2)}(x,z)$ as follows,
 \beq\label{6.36}
  H_{-,0}^{(2)}(x,z)=  H_{-,0}^{(2,1)}(x,z)+ H_{-,0}^{(2,2)}(x,z),
 \ene
 where
  \beq\label{6.37}\begin{array}{l}
H_{-,0}^{(2,1)}(x,z)=- 2\mathcal I \int_{-\varphi_0(0)}^{-\varphi_0(-P/2)} \sqrt{|h'_-(\lambda)|}\left( \sum_{n \in {J}} \ds\frac{1}{\sqrt{2\pi}} \sin[ 2\pi n \theta_{-,0}(x, \lambda)]\right.\\[.3cm] \left. (1-\psi(2\pi n/T_{-,0}(\lambda)) \ds g_{n}(z,\lambda)  \ds b_{-,0,n}(  \lambda) T_{-,0}(\lambda) \right)d\lambda,
\end{array}
\ene 
and, 
  \beq\label{6.38}\begin{array}{l}
H_{-,0}^{(2,2)}(x,z)=- 2\mathcal I \int_{-\varphi_0(0)}^{-\varphi_0(-P/2)} \sqrt{|h'_-(\lambda)|}\left( \sum_{n \in {J}} \ds\frac{1}{\sqrt{2\pi}} \sin[ 2\pi n \theta_{-,0}(x, \lambda)] \right.\\[.3cm] \left.\psi(2\pi n/T_{-,0}(\lambda)) \ds g_{n}(z,\lambda)  \ds b_{-,0,n}(  \lambda) T_{-,0}(\lambda)\right) d\lambda.
\end{array}
\ene 
For  $ z=\kappa\pm i\varepsilon,$ with $ \kappa \in I,$ we have that, $|2\pi n/T_{-,0}(\lambda)-z| \geq \tau>0,$
for all $\lambda$ in the support of $1-\psi(2\pi n/T_{-,0}(\lambda)).$ Hence, as in the proof of (a)  we prove that the operator defined by $H_{-,0}^{(2,1)}(x,z),$ for $z= \lambda\pm i\varepsilon,$ wtih $ \lambda \in I,$ extends  continuously to $ z=\lambda \in I$ in the operator norm in $\mathcal B(\mathcal H_{\alpha, \xi}, \hat{C}_{\rho, 0})$ and that the convergence is uniform for $ \lambda \in I.$ Furthermore, the operator defined by  $H_{-,0}^{(2,1)}(x,z)$ is analytic for $ z= \lambda \in I.$ We now consider $H_{-,0}^{(2,2)}(x,z)$ that is where the singularity lies. Using the notation introduced in \eqref{5.49}-\eqref{5.57} we express $H_{-,0}^{(2,2)}(x,z)$ as follows,

 \beq\label{6.39}\begin{array}{l}
H_{-,0}^{(2,2)}(x,z)=- 2\mathcal I\sum_{n \in {J}} \int_{a-\mu}^{b+\mu} \sqrt{|h'_-(\mathcal E_{-,0,n}(\beta))|}  \ds\frac{1}{\sqrt{2\pi}} \sin[ 2\pi n \theta_{-,0}(x, \mathcal E_{-,0,n}(\beta))] \\[.3cm] \psi(\beta) \ds  \frac{1}{\beta-z}   b_{-,0,n}( \mathcal E_{-,0,n}(\beta)) p_{-,0,n}(\beta)\,  d\beta.
\end{array}
\ene 
 By \eqref{5.53} and Proposition~\ref{proper} (d)
 \beq\begin{array}{c} \label{6.40}
\left |\mathcal E'_{-,0,n}(\beta)\right|=\left | p_{-,0,n}(\beta\right|\leq C \ds\frac{1}{n},\\[.2cm]
\left| \mathcal E_{-,0,n}(\beta_1)-   \mathcal E_{-,0,n}(\beta_2) \right | \leq  C\frac{1}{n} |\beta_1-\beta_2|,
 \\[.2cm]
\left| p_{-,0,n}(\beta_1)-   p_{-,0,n}(\beta_2)  \right| \leq C\frac{1}{n^2} |\beta_1-\beta_2|, \qquad \beta, \beta_1,\beta_2 \in [a-\mu, b+\mu].
 \end{array}
 \ene
  Further, the constant $C$ in \eqref{6.40}  is uniform in $ n \in \mathbb N.$ 
  We denote,
\beq\label{6.41}
k_{-,0,n }(x,\beta):=    \sqrt{|h'_-( \mathcal E_{-, 0,n}(\beta))|}
 \ds\frac{1}{\sqrt{2\pi}} \sin[2\pi n \theta_{-,0}(x, \mathcal E_{-, 0,n}(\beta))]  p_{-,0,n}(\beta).
 \ene 
 By \eqref{6.40} and \eqref{ap.12}
 \beq \begin{array}{l}\label{6.42}
 \left|  k_{-,0,n }(x,\beta)\right| \leq C \frac{1}{n},\\
 \left|  k_{-,0,n }(x,\beta_1)- k_{-,0,n }(x,\beta_2) \right|  \leq  C_\Delta \,  \frac{1}{n^{1/2+\Delta/2}} |\beta_1-\beta_2|^{1/2-\Delta/2},\\[.3cm] \qquad  0 <\Delta <1,     \beta, \beta_1,\beta_2 \in [a-\mu, b+\mu],
\end{array}
\ene
with  the constants $C$  and $C_\Delta$ in \eqref{6.42}  uniform in $ n \in \mathbb N$ and in  $x \in (-P/2, P/2) .$  By \eqref{6.39} and \eqref{6.41}, for    $ \kappa \in [a,b],$
 \beq\label{6.43}\begin{array}{l}
H_{-,0}^{(2,2)}(x, \kappa\pm i \varepsilon )=- 2\mathcal I\sum_{n \in {J}} \left[\int_{a-\mu}^{b+\mu} \left( k_{-,0,n}(x,\beta) \psi(\beta)    b_{-,0,n}( \mathcal E_{-,0,n}(\beta)) - k_{-,0,n}(x,  \kappa)\psi(\kappa) \right.  \right.\\[.3cm] \left.\left.  b_{-,0,n}( \mathcal E_{-,0,n}(\kappa) )\right)   \ds  \frac{1}{\beta-\kappa\mp i \varepsilon}\,  d\beta+  k_{-,0,n}(x,  \kappa) \psi(\kappa) b_{-,0,n}( \mathcal E_{-,0,n}(\kappa)) 
   \right.\\[.3cm] \left.(\log(b+\mu-\kappa\mp i\varepsilon)- \log(a-\mu-\kappa\mp i\varepsilon))\right],  
\end{array}
\ene 
where we take the principal branch of the logarithm continuous in the positive real axis.
Moreover,  for any  $ \upsilon >0,$ and any   $ 0< \zeta < 1/2,$ there is a constant $C$ such that,
\beq\label{6.44}\begin{array}{l}
\left|\theta_{-,0}(x_1,\mathcal E_-)- \theta_{-,0}(x_2,\mathcal E_-)\right| \leq C  |x_1-x_2|^\zeta  ,  \mathcal E_{-,0} \in[ -\varphi_0(0)+ \upsilon, -\varphi_0(-P/2)-\upsilon ], \\[.3cm] x \in [P/2, P/2].
\end{array}
\ene
This estimate is proved as in Proposition A.4 of \cite{wegal}. By the first equation in \eqref{6.40},  \eqref{6.41},
and \eqref{6.44}
for any $0< \hat{\Delta} < 1,$ and any $ 0< \zeta < 1/2,$
\beq\label{6.45}
 |k_{-,0,n }(x_2,\beta)-k_{-,0,n }(x_1,\beta)| \leq C \frac{1}{n^{\hat{\Delta}} }|x_1-x_2|^{\zeta(1-\hat{\Delta)}},\qquad
      \beta \in [a-\mu, b+\mu],
\ene
with  the constant $C$  in \eqref{6.45}  uniform in $ n \in \mathbb N$ and in  $x \in (-P/2, P/2) .$
By the second equation in \eqref{6.42} and \eqref{6.45}
\beq\label{6.46} \begin{array}{l}
\left| k_{-,0,n }(x_2,\beta)-k_{-,0,n }(x_2,\kappa)-  ( k_{-,0,n }(x_1,\beta)-k_{-,0,n }(x_1,\kappa))\right |\leq C\\[.3cm]
\left[ \frac{1}{n^{1/2+\Delta/2}} |\beta -\kappa|^{1/2-\Delta/2}\right]^{1/2+\iota}
\left[   \frac{1}{n^{\hat{\Delta}}} |x_1-x_2|^{\zeta(1-\hat{\Delta)}}  \right ]^{1/2-\iota}, \qquad 0 \leq \iota \leq 1/2.
\end{array}
\ene   
Then, given any $\rho < 1/4$ we can pick in \eqref{6.46}  $\Delta, \hat{\Delta}, \iota,$ and $\zeta$ such that,
\beq\label{6.47}
 \begin{array}{l}
\left| k_{-,0,n }(x_2,\beta)-k_{-,0,n }(x_2,\kappa)-  ( k_{-,0,n }(x_1,\beta)-k_{-,0,n }(x_1,\kappa))\right |\leq C\\[.3cm]
 \frac{1}{n^{1/2+\varpi_1}} |\beta -\kappa|^{\varpi_2}  |x_1-x_2|^\rho,
\end{array}
\ene   
  for some positive $ \varpi_1$ and $\varpi_2.$  The constant $C$  is  uniform in $ n \in \mathbb N,$  in  $x \in (-P/2, P/2),$  and in $ \beta$ and $\kappa$ in $[a-\mu, b+\mu].$
Furthermore, by \eqref{6.42}, \eqref{6.43}, \eqref{6.45},  and
\eqref{6.47}
\beq\label{6.48}
\begin{array}{l}
\lim_{\varepsilon \downarrow 0}H_{-,0}^{(2,1)}(x, \kappa \pm i \varepsilon )=- 2\mathcal I\sum_{n \in {J}} \left[\int_{a-\mu}^{b+\mu} ( k_{-,0,n}(x,\beta) \psi(\beta)    b_{-,0,n}( \mathcal E_{-,0,n}(\beta)) -\right.\\[.3cm] \left. k_{-,0,n}(x,  \kappa)\psi(\kappa)     b_{-,0,n}( \mathcal E_{-,0,n}(\kappa) ))   \ds  \frac{1}{\beta-\kappa}\,  d\beta+  k_{-,0,n}(x,  \kappa) \psi(\kappa) b_{-,0,n}( \mathcal E_{-,0,n}(\kappa)) 
  \right.\\[.3cm] \left.(\log(b+\mu-\kappa)- \log(\kappa -a+\mu )\mp i \pi)\right],   
\end{array}
\ene 
with the limit in the uniform operator topology in  $ \mathcal B(\mathcal H_{\alpha, \xi},  \mathcal H_{\eta, \rho}).$
Furthermore, the limit in \eqref{6.48} is uniform for $ \kappa \in [a,b].$ Estimating the remaining terms in \eqref{6.18i} in a similar way we complete the proof of (b). Item (c) follows from (a) and (b). Finally (d) follows from (a) and (b) as limits in norm of compact operators are compact.
\end{proof}
Let us denote,
\beq\label{649}
M_\pm(z):= M_1(z)+M_{2,\pm}(z), \qquad z \in \mathbb C_\pm \cup \Gamma.
\ene
In the following theorem we study the invertibility of $I+M_\pm(z).$
\begin{theorem}\label{invert}
Suppose that Assumption~\ref{assum0} is satisfied, that    $h'_\pm(v^2/2) v^2 $  is  a integrable function of $v\in \mathbb R,$ and that $h_\pm(\lambda)$ have a continuous second derivative that fulfills 
\eqref{6.18qww}. Then for $ \xi < 1/4$ and $ \gamma < \xi/2$ we have.
  
\begin{enumerate}
\item[\rm (a)] For  $ z \in \mathbb C_\pm,$ the operator $I+M(z)$   is invertible  in  $\mathcal H_{\gamma, \xi}.$ and its inverse is a bounded operator.
\item[\rm(b)] Denote,
\beq\label{6.49}
\mathcal N_\pm:=\{ \lambda \in \Gamma : -1  \,\text{ \rm  is an eigenvalue of} \, M_\pm(\lambda) \},
\ene
and
\beq\label{6.50}
\mathcal M_\pm:= \mathcal N_\pm \cup \left[  \cup_{n \in \mathbb Z} \left\{ \frac{2 \pi n}{T_{-,0}(-\varphi_0(0))},  \frac{2 \pi n}{T_{+,0}(\varphi_0(P/2))} \right\}\right].
\ene
Then, $\mathcal M_\pm$ are closed sets of measure zero,  $I+M_\pm (\lambda)$ is invertible for $\lambda \in \Gamma \setminus \mathcal M_\pm= \Gamma \setminus \mathcal N_\pm,$ and its inverse is a bounded operator in $\mathcal H_{\gamma, \xi}.$ 
\end{enumerate}
\end{theorem}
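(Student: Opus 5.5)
The plan is to factor $I+M(z)=(I+M_1(z))(I+K(z))$ with $K(z):=(I+M_1(z))^{-1}M_2(z)$. This reduces invertibility to a Fredholm alternative for a compact perturbation of the identity.

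By Theorem~\ref{theom1}(b), $I+M_1(z)$ is boundedly invertible on $\mathcal H_{\gamma,\xi}$ for $z\neq 0$, with the explicit inverse \eqref{6.18q1}--\eqref{6.18q2}. By Theorem~\ref{theolim}(a),(d), $M_2(z)$ and $M_{2,\pm}(\lambda)$, $\lambda\in\Gamma$, are compact from $\mathcal H_{\gamma,\xi}$ into $\mathcal H_{\eta,\rho}$. Choosing $\eta\geq\gamma$ and $\xi<\rho<1/4$, the embedding $\mathcal H_{\eta,\rho}\subset\mathcal H_{\gamma,\xi}$ is continuous (Proposition~A.1 of \cite{wegal}). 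Hence $K(z)$ and $K_\pm(\lambda):=(I+M_1(\lambda))^{-1}M_{2,\pm}(\lambda)$ are compact on $\mathcal H_{\gamma,\xi}$. The operator $I+M(z)=(I+M_1(z))(I+K(z))$ is then Fredholm of index zero, so it is invertible if and only if $-1$ is not an eigenvalue of $M(z)$.

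For (a), take $z\in\mathbb C_\pm$ and suppose $G\in\mathcal H_{\gamma,\xi}$ satisfies $G+\hat{\mathcal V}R_{\hat{\mathcal A}_0}(z)G=0$. Set $\Phi:=R_{\hat{\mathcal A}_0}(z)G$. The point to verify first is that $\Phi\in D[\hat{\mathcal A}_0]\subset\hat{\mathcal H}$. This requires $G\in\hat{\mathcal H}$, which holds by the $L^2$ part of the norm in \eqref{6.18q}; the $F$ component is continuous on a bounded interval and so lies in $L^2_0$. Then $(\hat{\mathcal A}_0-z)\Phi=G=-\hat{\mathcal V}\Phi$, so $(\hat{\mathcal A}-z)\Phi=0$. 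Since $\hat{\mathcal A}$ is selfadjoint (Theorem~\ref{vasp}) and $\operatorname{Im}z\neq0$, this forces $\Phi=0$ and hence $G=0$. Boundedness of the inverse follows from the Fredholm alternative (or the bounded inverse theorem).

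For (b), the set $\mathcal M_\pm$ is closed. The function $\lambda\mapsto K_\pm(\lambda)$ is norm continuous on $\Gamma$ by Theorem~\ref{theolim}(c) and the explicit continuity of $(I+M_1(\lambda))^{-1}$ in $\lambda\neq0$. Invertibility is an open condition, so $\Gamma\setminus\mathcal N_\pm$ is open in $\Gamma$. Since the threshold set $\{2\pi n/T_{\mp,0}\}\cup\{0\}$ is discrete and closed, $\mathcal M_\pm$ is closed. Away from $\mathcal N_\pm$, the Fredholm alternative gives invertibility with bounded inverse.

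Measure zero is the main obstacle. I would use the analytic Fredholm theorem up to the boundary in the following form. Fix a compact interval $I\subset\Gamma$. Approximate $K_\pm$ uniformly on $I$ by finite rank operators, as is possible by the proof of compactness in Theorem~\ref{theolim}, where truncations in $n$ and $\lambda$ converge in norm. This reduces the question locally to the zeros of a scalar determinant $d(z)$ that is analytic in $\mathbb C_\pm$, continuous up to $I$, and not identically zero since $I+M(z)$ is invertible in $\mathbb C_\pm$ by (a). The boundary values of such a function can vanish on a set of positive measure only if $d\equiv0$ (Privalov/Luzin uniqueness theorem), because $d$ is bounded near $I$. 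Hence $\mathcal N_\pm\cap I$ has measure zero. A countable union over intervals exhausting $\Gamma$ finishes the proof.

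An alternative route to measure zero, which I would try if the determinant argument gets technical, is the following. One shows that each $\lambda\in\mathcal N_\pm$ yields an eigenvector of $\hat{\mathcal A}$: the kernel element of $I+M_\pm(\lambda)$ produces an $L^2$ solution of $(\hat{\mathcal A}-\lambda)\Phi=0$ via the standard argument that the imaginary part of $(M_\pm(\lambda)G,\ldots)$ controls the trace $\mathcal L(\lambda)$ from \eqref{5.103}. Eigenvalues of a selfadjoint operator on a separable space are countable, so $\mathcal N_\pm$ would be countable.
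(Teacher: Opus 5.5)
Your proposal follows the paper's proof. The steps match: the factorization $I+M_\pm=L(I+L^{-1}M_{2,\pm})$ with $L=I+M_1$; the resolvent argument for (a), which the paper writes as $R_{\hat{\mathcal A}}(z)(I+M(z))=R_{\hat{\mathcal A}_0}(z)$; closedness of $\mathcal M_\pm$ by stability of bounded invertibility; and measure zero by the boundary form of the analytic Fredholm alternative. For the last step the paper quotes Yafaev (Theorem 3 and Remark 4 of Section 8, Chapter 1), whose proof is exactly your local determinant plus Privalov uniqueness argument.

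The one step you should justify differently is the finite-rank approximation. The truncations in $n$ and in $\lambda$ in the proof of Theorem~\ref{theolim} do not give finite-rank operators. What remains after truncating is still an integral operator in $\lambda$ acting on the coefficients $b_{-,0,n}(\lambda)$. Its compactness comes from Arzel\`a--Ascoli, and it has infinite rank in general.

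The paper handles this differently. It takes finite-rank operators $P_n$ on $\hat{C}_{\rho,0}((-P/2,P/2))$ converging strongly to the identity. Since $M_{2,\pm}(\lambda)$ is compact with range in the third component, $L^{-1}(\lambda)\mathbf P_n M_{2,\pm}(\lambda)\to L^{-1}(\lambda)M_{2,\pm}(\lambda)$ in norm.

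You can also avoid approximability altogether:
\begin{itemize}
\item Because $I+K_\pm(\lambda_0)$ is Fredholm of index zero, there is a finite-rank $F$ with $I+K_\pm(\lambda_0)+F$ invertible.
\item By continuity it stays invertible for $z$ near $\lambda_0$ in $\overline{\mathbb C_\pm}$.
\item Factoring it out leaves the identity minus a finite-rank operator. Its determinant is analytic in the half-disc, continuous up to the real segment, and nonvanishing off the axis by (a).
\end{itemize}
Approximation at each single $\lambda_0$ suffices; uniformity on a whole interval $I$ is not needed.

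Your alternative route to countability of $\mathcal N_\pm$ is not justified in this setting. Turning a kernel vector $G$ of $I+M_\pm(\lambda)$ into an eigenvector of $\hat{\mathcal A}$ requires $R_{\hat{\mathcal A}_0}(\lambda\pm i0)G\in\hat{\mathcal H}$. That essentially needs the trace to vanish at $\lambda$ to H\"older order above $1/2$. Membership $G\in\mathcal H_{\gamma,\xi}$ only provides order $\gamma<1/8$. The paper proves, and only needs, measure zero.
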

\begin{proof}
We denote,
\beq\label{6.51}
L(z)= I+M_1(z), \qquad z \in \mathbb C\setminus\{0\}.
\ene
By (b) of Theorem~\ref{theom1} $L(z)$ is invertible, with a bounded inverse, in  $\mathcal H_{\gamma, \xi}.$
Further,
\beq\label{6.52}
I+ M_\pm(z)= L(z)(I+ L^{-1}(z) M_{2,\pm}(z)), \qquad z \in \mathbb C_\pm\cup \Gamma.
\ene 
By  (a) of Theorem~\ref{theolim} $ L^{-1}(z)M_2(z)$ is compact for $z \in \mathbb C_\pm.$ Hence, $I+ M(z)$  is invertible and its inverse is a bounded operator in  $\mathcal H_{\gamma, \xi}$ if and only if $-1$ is not an eigenvalue of $M(z).$ However, by the second resolvent equation,
\beq\label{6.56b}
R_{\hat{A}}(z)(I+ M(z))= R_{\hat{A}_0(z)}, \qquad z \in \mathbb C_\pm,  
\ene
 If $M(z)$ would have an eigenvalue minus one,  $R_{\hat{\mathcal A_0}}(z)$  would have an  eigenvalue zero. This is impossible since $\mathbb C_\pm$ belongs to the resolvent set of $\hat{\mathcal A_0}.$
 This proves (a). Let us prove (b).  By (d) of Theorem~\ref{theolim}   $ L^{-1}(\lambda)M_{2,\pm}(\lambda)$ is compact. Then, by\eqref{6.52},   for $\lambda \in \Gamma,$ we have that  $I+M_\pm (\lambda)$ is invertible,  and its inverse is a bounded operator in $\mathcal H_{\gamma, \xi},$ if and only if $ \lambda \in \Gamma \setminus \mathcal N_\pm.$ Let us prove that $\mathcal N_\pm$ has measure zero. It is enough to prove that $\mathcal N_\pm \cap [a,b]$ is of measure zero for  $[a,b]\subset \Gamma.$ By \eqref{6.52}
 $ \lambda \in \mathcal N_\pm$ if and only if $-1$ is an eigenvalue of $ L^{-1}(\lambda)M_{2,\pm}(\lambda).$
 Let us prove that  $ L^{-1}(\lambda)M_{2,\pm}(\lambda)$ can be approximated in norm by finite rank operators.
 By the proof of Lemma 5 in page 121 of \cite{ya}, there is a sequence $\{P_n\}_{n \in \mathbb N}$ of finite rank operators in $\hat{C}_{\rho,0}((-P/2,P/2)),$   such that in the strong operator topology  in $\hat{C}_{\rho,0}((-P/2,P/2)),$  
 $$
 {\rm s}-\lim_{n \to \infty} P_n= I.
 $$
 We denote
 $$
 \mathbf P_n:=\begin{bmatrix} 0&0& 0\\
  0&0&0\\
  0&0&P_n      
\end{bmatrix}.
 $$
 Moreover, as $ L^{-1}(\lambda)$ is a bijection, $L^{-1}(\lambda) \mathbf P_n M_{2,\pm}(\lambda)$ is of finite rank,
 and as $M_{2,\pm}(\lambda)$ is compact,
 $$
 L^{-1}(\lambda)M_{2,\pm}(\lambda)= \lim_{n\to \infty}  L^{-1}(\lambda) \mathbf P_n M_{2,\pm}(\lambda)
 $$
 with the limit in the uniform operator norm in $\mathcal B(\mathcal H_{\gamma,\xi}).$ 
 Further, $ L^{-1}(z)M_{2,\pm}(z)$ is analytic for for $z \in \mathbb C_\pm,$
  continuos for $z \in \mathbb C_\pm\cup [a,b],$ is compact, and can be approximated in norm by finite rank operators.  Moreover,  $I+ L^{-1}(z) M_{2,\pm}(z)$ is invertible for $ z \in \mathbb C_\pm.$ Then, by Theorem 3 and Remark 4 in Section 8 of Chapter 1 of \cite{ya},  $\mathcal N_\pm \cap [a,b]$ is of measure zero. Let us prove that $\mathcal M_\pm$ is closed. Suppose that $\lambda_n$ converges to $\lambda$ where,
   $$
\lambda \in   \cup_{n \in \mathbb Z} \left\{ \frac{2 \pi n}{T_{-,0}(-\varphi_0(0))},  \frac{2 \pi n}{T_{+,0}(\varphi_0(P/2))} \right\}.
$$
  Then, $ \lambda \in \mathcal M_\pm$ by the definition of $\mathcal M_\pm.$  Suppose that   $\lambda_n \in \mathcal M_\pm $ converges to $ \lambda \in \Gamma.$  Then, for $n$ large enough $\lambda_n \in \mathcal N_\pm.$  Let us prove that $ \lambda \in \mathcal N_\pm.$ Otherwise, as  $L^{-1}(\lambda)M_{2,\pm}(\lambda)$ is compact, we would have that $(I+L^{-1}(\lambda)M_{2,\pm}(\lambda) )^{-1}$ would be a bounded operator in $\mathcal H_{\gamma,\xi}.$ However, by the stability of bounded invertibility theorem (Theorem 1.16 in page 196 0f \cite{kato}),  $( I+L^{-1}(\lambda)M_{2,\pm}(\lambda_n))^{-1}$ would be a bounded operator in $\mathcal H_{\gamma,\xi}$
  for $n$ large enough, in contradiction with the assumption that $\lambda_n \in \mathcal N_\pm.$ Here we used that $L^{-1}(\lambda)M_{2,\pm}(\lambda)$ is  continuous in the operator norm of  $\mathcal H_{\gamma,\xi}.$ This proves that $\mathcal M_\pm$ is closed.
\end{proof}
 \subsection{The absolutely continuous spectrum of the Vlasov-Amp\`ere operator}
 In this subsection we  identify the absolutely continuous spectrum of  the Vlasov-Amp\`ere operator $\mathcal A.$ 
By \eqref{6.56b}we have,
  \beq\label{s.66}
 R_{\hat{\mathcal A}}(z)= R_{\hat{\mathcal A}_0}(z)(I+ M(z))^{-1}, \qquad   \qquad z \in  \mathbb C_\pm.
\ene
For   $ f \in {\mathcal H}_{\gamma,\xi}, $ with $\gamma$ and $\xi$ as in Theorem~\ref{invert}, and $\mu \in \mathbb R
\setminus\mathcal M_\pm,$ we define,

\beq\label{s.67}
\ds f_{\mu\pm  i\varepsilon}:=  \left(I+M_\pm(\mu \pm i\varepsilon)\right)^{-1}\,f,  \qquad  \varepsilon \geq 0.
\ene
We define,
\beq\label{s.67b}
J_{\pm, 0,n}= (0, \beta_{\pm, n, {\rm max}}), n \in \mathbb N,\qquad   J_{\pm,0,n}= (-\beta_{\pm, -n, {\rm max}},0), -n \in \mathbb N,
\ene
and
\beq\label{s.67c}
J_{\pm,1,n}= (0, \infty), n \in \mathbb N, \qquad  J_{\pm, 1,n}= (-\infty, 0), -n \in \mathbb N.
\ene
We have:

\begin{proposition}\label{prop.stone}
Suppose that Assumption~\ref{assum0} is satisfied, that    $h'_\pm(v^2/2) v^2 $  is  a integrable function of $v\in \mathbb R,$ and that $h_\pm(\lambda)$ have a continuous second derivative that fulfills 
\eqref{6.18qww}.    Then, for every $f, g \in \mathcal H_{\gamma,\xi}$ with $ 0 < \xi < 1/4, 0 < \gamma < \xi/2,$   and for every $\mu \in\mathbb R
\setminus\mathcal M_\pm,$
\beq\label{s.68}\begin{array}{l} 
\lim_{\varepsilon \downarrow 0}\frac{1}{2\pi i}\left( \left[R_{\hat{\mathcal A}}(\mu+i\varepsilon)- R_{\hat{\mathcal A}}(\mu- i\varepsilon)\right] f, g \right)_{\hat{\mathcal H}}=\sum_{\nu=\pm} \left[ \sum_{n \in \mathbb Z_0}  \ds \chi_{J_{\nu,0,n}}(\mu) \ds T(\mathcal E_{\nu,0,n}(\mu))\right.\\[.3cm] \left. p_{\nu,0,n}(\mu)
 (\mathbf Ff_{\mu \pm i0})_{\nu,0,n}(\mathcal E_{\nu,0,n}(\mu)) \overline{(\mathbf Fg_{\mu\pm i0})_{\nu,0,n}(\mathcal E_{\nu,0,n}(\mu))}+\sum_{\upsilon=\pm}\sum_{n \in \mathbb Z_0}  \ds \chi_{J_{\nu,1,n}}(\mu) \ds \right. \\[.3cm] \left. T(\mathcal E_{\nu,1,n}(\mu)) \, p_{\nu,1,n}(\mu)
 (\mathbf Ff_{\mu \pm i0})_{\nu,1,\upsilon, n}(\mathcal E_{\nu,1,n}(\mu)) \overline{(\mathbf Fg_{\mu\pm i0})_{\nu,1,
 \upsilon}(\mathcal E_{\nu,1,n}(\mu))} \right] 
 \end{array}
\ene
where the limit exists in pointwise sense and in the sense of $L^1([a, b])$ for any closed and bounded interval $[a,b] \in \mathbb R\setminus \mathcal M_\pm.$
 Note that as   $(\mathbf Ff_{\mu \pm i0})_{\nu,0,n}(\mathcal E_{\nu,0,n}(\lambda)), 
 (\mathbf Fg_{\mu \pm i0})_{\nu,0,n}(\mathcal E_{\nu,0,n}(\lambda)),\linebreak (\mathbf Ff_{\mu\pm i0})_{\nu,1,\upsilon,n}(\mathcal E_{\nu,1,n}(\lambda)),$ and 
 $(\mathbf Fg_{\mu \pm i0})_{\nu,1,\upsilon,n}(\mathcal E_{\nu,1,n}(\lambda))$ 
  are   jointly  continuous in $\mu$ and $\lambda $  for $\nu=\pm,$ and $\upsilon=\pm,$ we can take the traces at   $ \lambda=\mu$  in \eqref{s.68}. The unitary operator $\mathbf F$ is defined in \eqref{5.84f}, \eqref{5.84h}. Further, for the definition of $ \mathcal E_{\pm, \xi, n}(\mu),$ and $p_{\pm, \xi,n}(\mu), $ for $ \xi=0,1,$
see equations \eqref{5.51}-\eqref{5.57}.
\end{proposition}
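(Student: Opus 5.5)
The plan is to reduce the jump of the perturbed resolvent to the spectral representation of $\hat{\mathcal A}_0$, using the identity \eqref{s.66} together with the adjoint identity for $\pm$. First, by \eqref{s.66} and the second resolvent equation, write
\[
R_{\hat{\mathcal A}}(z)=(I+M(\bar z)^{*})^{-1}\!\!\!\!{}^{\;*}\,\cdots
\]
in the symmetric form
\[
R_{\hat{\mathcal A}}(z)-R_{\hat{\mathcal A}}(\bar z)
=(I+M(\bar z))^{-1\,*}\bigl[R_{\hat{\mathcal A}_0}(z)-R_{\hat{\mathcal A}_0}(\bar z)\bigr](I+M(z))^{-1}.
\]
This is the standard identity: start from $R=R_0(I+M)^{-1}$ and $R=(I+\hat{\mathcal V}R_0)^{-1\,*}\ldots$, and use that $\hat{\mathcal V}$ is bounded and symmetric. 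Pairing with $f,g$ then gives
\[
\bigl([R_{\hat{\mathcal A}}(\mu+i\varepsilon)-R_{\hat{\mathcal A}}(\mu-i\varepsilon)]f,g\bigr)
=\bigl([R_{\hat{\mathcal A}_0}(\mu+i\varepsilon)-R_{\hat{\mathcal A}_0}(\mu-i\varepsilon)]f_{\mu\pm i\varepsilon},\,g_{\mu\pm i\varepsilon}\bigr),
\]
with $f_{\mu\pm i\varepsilon}$ as in \eqref{s.67}. One must check carefully that the sign conventions produce the same $\pm$ on both sides.

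Second, evaluate the unperturbed quantity through $\mathcal F$, using \eqref{5.90}–\eqref{5.91} and the trace maps \eqref{5.103}–\eqref{5.106}. For each $n\neq 0$ and each channel, the jump equals
\[
\int \frac{2i\varepsilon}{(\beta-\mu)^2+\varepsilon^2}\,
k_n(\beta)\,\overline{\ell_n(\beta)}\,d\beta ,
\]
where $k_n$ and $\ell_n$ are the $\tilde{\mathcal H}$-components
\[
\sqrt{|p_{\nu,\cdot,n}(\beta)|\,T(\mathcal E_{\nu,\cdot,n}(\beta))}\;(\mathbf F f_{\mu\pm i\varepsilon})_{\nu,\cdot,n}(\mathcal E_{\nu,\cdot,n}(\beta)).
\]
The $n=0$ components (the kernel) give no jump. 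Dividing by $2\pi i$ yields a Poisson kernel, which is an approximate identity. The sign of $p$ is absorbed into the orientation of the $\beta$-variable, which explains the $p$ without absolute value in \eqref{s.68}.

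Third, pass to the limit $\varepsilon\downarrow 0$. By Theorems~\ref{theom1}, \ref{theolim} and \ref{invert}, $f_{\mu\pm i\varepsilon}\to f_{\mu\pm i0}$ in $\mathcal H_{\gamma,\xi}$, uniformly for $\mu$ in compact subsets of $\mathbb R\setminus\mathcal M_\pm$. This holds because $I+M_\pm$ is continuous and invertible there, and inverses depend continuously on the operator. Hölder continuity in $\mathcal E$ with values in $L^2(S_1)$ gives Hölder continuity of each Fourier coefficient $b_n(\mathcal E)$. Composed with the Lipschitz maps $\beta\mapsto\mathcal E_{\nu,\cdot,n}(\beta)$ (cf.\ \eqref{6.40}), this shows the integrands are jointly continuous in $(\mu,\beta)$. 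The Poisson kernel then converges to evaluation at $\beta=\mu$, which is the trace, and gives \eqref{s.68} termwise.

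The main obstacle is interchanging the limit with the infinite sums over $n$ and channels, and obtaining $L^1([a,b])$ convergence. For fixed $\mu$, only finitely many trapped-region channels are active, since $\chi_{J_{\nu,0,n}}$ cuts them off. However, the free channels $J_{\nu,1,n}$ contribute for all $n$. I would dominate these using $\sum_n|b_n(\mathcal E)|^2=\|g(\cdot,\mathcal E)\|^2_{L^2(S_1)}$, uniformly bounded in $\mathcal E$ by the $C_\gamma$ norm, together with $|p_{\nu,1,n}|\le C/|n|$ on compact $\beta$-sets away from $0$. Controlling the Poisson tails by the uniform sup and Hölder bounds then gives dominated convergence. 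The $L^1$ statement follows from the uniformity of these bounds on $[a,b]$.
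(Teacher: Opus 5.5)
Your first two steps are the paper's proof. The paper combines the first resolvent identity \eqref{s.69a} with \eqref{s.66} to obtain the Poisson-kernel integrals \eqref{s.69} in the spectral representation of $\hat{\mathcal A}_0$. It then lets $\varepsilon\downarrow0$ by invoking Theorems 7 and 8 of \cite{ya}, without writing out the sums over $n$.

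There are two slips in that part.
First, the operator identity you display is false as written. Already for scalars $\hat{\mathcal A}_0=a$, $\hat{\mathcal V}=v\neq0$, $z\notin\mathbb R$, it reduces to $v(z-\bar z)=0$. The correct version has $\bigl((I+M(z))^{-1}\bigr)^{*}$ as the left factor (or $M(\bar z)$ in both outer factors). That correct version is what gives your paired formula with the same sign on $f$ and $g$.
Second, the sign of $p$ cannot be ``absorbed into the orientation'' in a pointwise formula. For trapped channels with $n>0$ one has $p_{\nu,0,n}<0$, because $T_{\nu,0}$ is increasing. So the density must carry $|p|$, as your own $k_n$ and \eqref{5.87}, \eqref{s.75e} do, and \eqref{s.68} must be read with $|p|$.

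The genuine gap is in the step you single out: interchanging $\varepsilon\downarrow0$ with the sums over $n$. Write $b_n,c_n$ for the Fourier coefficients of the components of $f_{\mu\pm i\varepsilon}$ and $g_{\mu\pm i\varepsilon}$.

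First, the trapped sums are not finite. For $n>0$, $J_{\nu,0,n}=(0,2\pi n/\min T_{\nu,0})$ grows with $n$. So for fixed $\mu$ only finitely many small $|n|$ are cut off, and every $n$ with $n\mu>0$ and $|n|$ large is active (cf. $n_\mp=j_\mp,j_\mp+1,\dots$ in \eqref{5.105}).

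Second, your majorant does not close, for trapped or free channels.
By definition of $\mathcal E_{\nu,\cdot,n}$, one has $T(\mathcal E_{\nu,\cdot,n}(\beta))=2\pi|n|/|\beta|$. So $|p_{\nu,\cdot,n}|\le C/|n|$ only makes the weights $T|p|$ bounded, not decaying.
Parseval, $\sum_n|b_n(\mathcal E)|^2=\|g(\cdot,\mathcal E)\|^2_{L^2(S_1)}$, holds at one fixed energy. But the $n$-th term samples $b_n$ at the $n$-dependent energy $\mathcal E_{\nu,\cdot,n}(\beta)$.
What your ingredients actually control is $\sum_n O(1)\sup_{\mathcal E}|b_n(\mathcal E)|\sup_{\mathcal E}|c_n(\mathcal E)|$, which need not converge. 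So you get neither dominated convergence nor even convergence of the series in \eqref{s.68} at a given $\mu$.

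The missing idea is the logarithmic divergence of the periods at the separatrices, Proposition~\ref{proper} (a), (b), (f), (g).
Together with $T(\mathcal E_{\nu,\cdot,n}(\beta))=2\pi|n|/|\beta|$, it gives $|\mathcal E_{\nu,\cdot,n}(\beta)-\mathcal E_*|\le Ce^{-c|n|}$, uniformly for $\beta$ in compact subsets of $\mathbb R\setminus\{0\}$. Here $\mathcal E_*$ is the separatrix energy: $-\varphi_0(P/2)$ for electrons, $\varphi_0(0)$ for ions.
A function in $C_\gamma(I,L^2(S_1))$ extends H\"older continuously to $\mathcal E_*$. Hence $|b_n(\mathcal E_{\nu,\cdot,n}(\beta))|\le|b_n(\mathcal E_*)|+C\|G\|_{\mathcal H_{\gamma,\xi}}e^{-c\gamma|n|}$, with a bound independent of $\beta$, and likewise for $c_n$.
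Since $\{b_n(\mathcal E_*)\},\{c_n(\mathcal E_*)\}\in l^2$, Cauchy--Schwarz and $T|p|=O(1)$ give a $\beta$-independent summable majorant. Its tails are uniformly small along the norm-convergent family $f_{\mu\pm i\varepsilon}\to f_{\mu\pm i0}$.
Away from $\beta=\mu$, bound the Poisson tail by $\varepsilon\delta^{-2}\|f_{\mu\pm i\varepsilon}\|\,\|g_{\mu\pm i\varepsilon}\|$, using unitarity of $\mathcal F$.
With this, the rest of your third step goes through, uniformly in $\mu\in[a,b]$.
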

\begin{proof} By the first resolvent equation,
\beq\label{s.69a}
 R_{\hat{\mathcal A}}(\mu+i\varepsilon)-R_{\hat{\mathcal A}}(\mu-i\varepsilon)= 2i \varepsilon  R_{\hat{\mathcal A}}(\mu+i\varepsilon)
 R_{\hat{\mathcal A}}(\mu-i\varepsilon).
\ene
Then, by  \eqref{5.90}, \eqref{5.91}, \eqref{s.66}, \eqref{s.67}, and \eqref{s.69a},
\beq\label{s.69}\begin{array}{l}
\frac{1}{2\pi i}\left( \left[R_{\hat{\mathcal A}}(\mu+i\varepsilon)- R_{\hat{\mathcal A}}(\mu- i\varepsilon)\right] f, g \right)_{\hat{\mathcal H}}=    \sum_{\nu=\pm }  \ds \int_{I_{\nu,0}} \ds T(\mathcal E_{\nu,0}) \ds\frac{\varepsilon}{\pi}\frac{1}{(\mu)^2+\varepsilon^2}
 (\mathbf Ff_{\mu \pm i \varepsilon})_{\nu,0,0}(\mathcal E_{\nu,0}) \\\overline{(\mathbf Fg_{\mu\pm i \varepsilon})_{\nu,0,0}(\mathcal E_{\nu,0})}  d  \mathcal E_{\nu} +  \sum_{\nu=\pm } \sum_{\upsilon=\pm} \ds \int_{I_{\nu,1}} \ds T(\mathcal E_{\nu,1}) \ds\frac{\varepsilon}{\pi}\frac{1}{(\mu)^2+\varepsilon^2}
 (\mathbf Ff_{\mu \pm i \varepsilon})_{\nu,1,\upsilon,0}(\mathcal E_{\nu,1}) \\\overline{(\mathbf Fg_{\mu\pm i \varepsilon})_{\nu,1,\upsilon,0}(\mathcal E_{\nu,1})}  d  \mathcal E_{\nu,1} +
 \sum_{\nu=\pm} \sum_{ n \in \mathbb Z_0}  \ds \int_{J_{\nu,0,n}} \ds T(\mathcal E_{\nu,0,n}(\beta))   p_{\nu,0,n}(\beta) \ds\frac{\varepsilon}{\pi}\frac{1}{(\mu-\beta)^2+\varepsilon^2}\\
 (\mathbf Ff_{\mu \pm i \varepsilon})_{\nu,0,n}(\mathcal E_{\nu,0,n}(\beta)) \overline{(\mathbf Fg_{\mu\pm i \varepsilon})_{\nu,0,n}(\mathcal E_{\nu,0,n}(\beta))}  d \beta+  
 \sum_{\nu=\pm} \sum_{ \upsilon=\pm} \sum_{ n\in \mathbb Z_0}  \ds \int_{J_{\nu,1,n}} \ds T(\mathcal E_{\nu,1,n}(\beta))
  \\   p_{\nu,1,n}(\beta) \ds\frac{\varepsilon}{\pi}\frac{1}{(\mu-\beta)^2+\varepsilon^2}
 (\mathbf Ff_{\mu \pm i \varepsilon})_{\nu,1,\upsilon,n}(\mathcal E_{\nu,1,n}(\beta)) \overline{(\mathbf Fg_{\mu\pm i \varepsilon})_{\nu,1,\upsilon,n}(\mathcal E_{\nu,1,n}(\beta))}  d \beta.  
\end{array}
\ene
Moreover, by  \eqref{s.69} and Theorems 7  and 8 in Subsection 3 of Section 2 of Chapter 1 of \cite{ya}
\beq\label{s.71}\begin{array}{l}
\lim_{\varepsilon \downarrow 0}\frac{1}{2\pi i}\left( \left[R_{\hat{\mathcal A}}(\mu+i\varepsilon)- R_{\hat{\mathcal A}}(\mu- i\varepsilon)\right] f, g \right)_{\mathcal H}= \sum_{\nu=\pm} \left[ \sum_{n \in \mathbb Z_0}  \ds \chi_{J_{\nu,0,n}}(\mu) \ds T(\mathcal E_{\nu,0,n}(\mu))\right.\\ [.3cm]\left. p_{\nu,0,n}(\mu)
 (\mathbf Ff_{\mu \pm i0})_{\nu,0,n}(\mathcal E_{\nu,0,n}(\mu)) \overline{(\mathbf Fg_{\mu\pm i0})_{\nu,0,n}(\mathcal E_{\nu,0,n}(\mu))}+\sum_{\upsilon=\pm}\sum_{n \in \mathbb Z_0}  \ds \chi_{J_{\nu,1,n}}(\mu) \ds T(\mathcal E_{\nu,+,n}(\mu))\right. \\[.3cm] \left. p_{\nu,1,n}(\mu)
 (\mathbf Ff_{\mu \pm i0})_{\nu,1,\upsilon, n}(\mathcal E_{\nu,1,n}(\mu)) \overline{(\mathbf Fg_{\mu\pm i0})_{\nu,1,
 \upsilon,n}(\mathcal E_{\nu,1,n}(\mu))} \right] 
\end{array}
\ene
where the limit exists in pointwise sense and in the sense of $L^1([a, b])$ for any closed and bounded interval $[a,b] \in \mathbb R\setminus \mathcal M_\pm.$
 This concludes the proof of the proposition.
\end{proof}

For a selfadjoint operator  $B$ in a Hlbert space we denote by  $E_B(O)$ the spectral projector for a Borel set $O.$ Further, we designate by $\mathcal H_{\rm ac}(B)$ the absolutely continuous subspace of $B.$
\begin{theorem} \label{theoproj}
Suppose that Assumption~\ref{assum0} is satisfied, that    $h'_\pm(v^2/2) v^2 $  is  a integrable function of $v\in \mathbb R,$ and that $h_\pm(\lambda)$ have a continuous second derivative that fulfills 
\eqref{6.18qww}.   Then.
  \begin{enumerate}
\item[\rm (a)] We have, $\mathcal M_+=\mathcal M_-:=\mathcal M,$  where $\mathcal M_\pm$ are the  closed sets of measure zero  defined in Theorem~\ref{invert}. 
\item[\rm (b)]
The projector $P_{\rm ac}(\hat{\mathcal A})$ onto the subspace of absolute continuity of the Vlasov-Amp\`ere operator$\hat{\mathcal A} $ is given by,
 \beq\label{s.72}
 P_{\rm ac}(\hat{\mathcal A})=   E_{\hat{\mathcal A}}(\mathbb R \setminus \mathcal M)                        .   
 \ene

 \item[\rm(c)]
 The absolutely continuous spectrum of $\hat{\mathcal A}$ is given by,
 \beq\label{s.72b}
 \sigma_{\rm ac}(\hat{\mathcal A})=   \mathbb R.
\ene
\item[\rm (d)]
The  singular spectrum of $\hat{\mathcal A}$ is contained in    $\mathcal M$
\beq\label{2.72c}
\sigma_{\rm sing}(\hat{\mathcal A}) \subset \mathcal M.
\ene
We recall that the singular spectrum of $\hat{\mathcal A}$  is the union of the closure of the set of all eigenvalues of   $\hat{\mathcal A}$  with  the singular continuous spectrum of $\hat{\mathcal A}.$
 \end{enumerate}
 \end{theorem}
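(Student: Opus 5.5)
The plan is to derive everything from the limiting absorption principle of Theorem~\ref{invert} combined with Stone's formula in the form of Proposition~\ref{prop.stone}.

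\emph{Item (a).} For $\lambda\in\Gamma$ the operators $M_{2,+}(\lambda)$ and $M_{2,-}(\lambda)$ differ only in the sign of the $\mp i\pi$ term in \eqref{6.48}. I will show that $\mathcal N_+=\mathcal N_-$ by a symmetry argument. Suppose $G\in\mathcal H_{\gamma,\xi}$ satisfies $(I+M_+(\lambda))G=0$. Then I will pair with $R_{\hat{\mathcal A}_0}(\lambda+i0)G$ and use the selfadjointness of $\hat{\mathcal V}$ (so that $(\hat{\mathcal V}R_0 G,R_0G)$ has the appropriate reality) to obtain
\[
\operatorname{Im}\bigl(G, R_{\hat{\mathcal A}_0}(\lambda+i0)G\bigr)=0 .
\]
The imaginary part of this pairing is $\pi$ times the sum of the squared traces appearing in \eqref{s.68}. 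Hence every trace $\mathcal L_{j_-,j_+}(\lambda)\mathbf F G$ vanishes. Once the traces vanish, $R_0(\lambda+i0)G=R_0(\lambda-i0)G$ on this vector, so $M_+(\lambda)G=M_-(\lambda)G$ and $(I+M_-(\lambda))G=0$. By symmetry of the argument, $\mathcal N_+=\mathcal N_-$, and therefore $\mathcal M_+=\mathcal M_-$, since the threshold sets coincide by definition.

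\emph{Item (b).} Take $f,g$ in the dense subset $\mathcal H_{\gamma,\xi}\cap\hat{\mathcal H}$. By Proposition~\ref{prop.stone}, on every compact interval $[a,b]\subset\mathbb R\setminus\mathcal M$ the quantity $\frac{1}{2\pi i}\bigl((R(\mu+i\varepsilon)-R(\mu-i\varepsilon))f,g\bigr)$ converges in $L^1([a,b])$. Stone's formula then shows that $(E_{\hat{\mathcal A}}(\Delta)f,g)=\int_\Delta w_{f,g}(\mu)\,d\mu$ with $w_{f,g}\in L^1$. So the spectral measure of $E_{\hat{\mathcal A}}(\mathbb R\setminus\mathcal M)f$ is absolutely continuous. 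By density and closedness of $\mathcal H_{\rm ac}$, this gives $E_{\hat{\mathcal A}}(\mathbb R\setminus\mathcal M)\hat{\mathcal H}\subset\hat{\mathcal H}_{\rm ac}$. Conversely, $\mathcal M$ is a closed null set, so $E(\mathcal M)P_{\rm ac}=0$. This yields \eqref{s.72}.

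\emph{Items (c) and (d).} Item (d) follows directly from (b): $E(\mathbb R\setminus\mathcal M)$ annihilates the singular subspace, so the singular spectral measure is carried by the closed set $\mathcal M$, and therefore $\sigma_{\rm sing}\subset\mathcal M$.

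For (c), I must show that the ac spectrum is all of $\mathbb R$. Because $\mathcal M$ is closed of measure zero, it suffices to show that every open interval $\Delta\subset\mathbb R\setminus\mathcal M$ carries nonzero ac spectrum; this gives density of $\sigma_{\rm ac}$ in $\mathbb R$, and closedness of the spectrum finishes. I expect this step to be the main obstacle, because nontriviality requires a positive lower bound on the Stone density.

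To obtain it, I choose $g=(I+M_+(\mu))h$ with $h\in\mathcal H_{\gamma,\xi}$, so that $g_{\mu+i0}=h$. The density in \eqref{s.68} then becomes the squared trace of $\mathbf Fh$ at $\mu$, which is positive for suitable $h$ supported near $\mathcal E_{\nu,1,1}(\mu)$. The density is continuous in $\mu$, so it stays positive on a subinterval of $\Delta$, and hence $E(\Delta)\ne0$ on $\hat{\mathcal H}_{\rm ac}$. One could alternatively argue via wave operators, but those appear later in the paper, so the direct Stone-formula argument is the one I would use.
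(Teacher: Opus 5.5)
Your proof is correct. Parts (b) and (d) follow the paper: Stone's formula with Proposition~\ref{prop.stone}, then $P_{\rm s}(\hat{\mathcal A})=E_{\hat{\mathcal A}}(\mathcal M)$. You should say explicitly that atoms at the endpoints are excluded by shrinking the interval, since Stone's formula only gives $\frac12\bigl(E_{\hat{\mathcal A}}([a,b])+E_{\hat{\mathcal A}}((a,b))\bigr)$.

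For (a) your route is genuinely different, and it is the one that actually proves the set identity. The paper proves (b) separately with $\mathcal M_+$ and with $\mathcal M_-$, infers $E_{\hat{\mathcal A}}(\mathcal M_+\triangle\mathcal M_-)=0$, and concludes $\mathcal M_+\triangle\mathcal M_-=\emptyset$ because the spectral measure has support $\mathbb R$. That last inference only rules out open sets. Here $\mathcal M_-\setminus\mathcal M_+$ is a Lebesgue-null set lying where the spectrum is already purely absolutely continuous, so its spectral projection vanishes automatically and nothing follows. Your argument instead works pointwise. A null vector $G$ of $I+M_+(\lambda)$ has vanishing traces at $\lambda$. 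Since $M_{2,+}(\lambda)-M_{2,-}(\lambda)$ depends only on those traces (the $\mp i\pi$ term in \eqref{6.48}), $G$ is also a null vector of $I+M_-(\lambda)$.

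To make the reality step rigorous, do not treat $R_{\hat{\mathcal A}_0}(\lambda+i0)G$ as a vector in $\hat{\mathcal H}$. Instead write $(G,R_{\hat{\mathcal A}_0}(\lambda+i\varepsilon)G)=-(M_+(\lambda)G,R_{\hat{\mathcal A}_0}(\lambda+i\varepsilon)G)$. Then use two facts: the $F$-component of $R_{\hat{\mathcal A}_0}(z)G$ is $-z^{-1}F$, and $\hat{\mathcal V}_2=\hat{\mathcal V}_1^\ast$. The limit as $\varepsilon\downarrow 0$ is $2\lambda^{-1}\,\mathrm{Re}\,(M_{2,+}(\lambda)G,F)$, which is real. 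Since $\mathrm{Im}\,(G,R_{\hat{\mathcal A}_0}(\lambda+i\varepsilon)G)=-\varepsilon\|R_{\hat{\mathcal A}_0}(\lambda+i\varepsilon)G\|^2$, the Stone density of $G$ at $\lambda$ vanishes, as you claim.

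For (c), your construction with $g=(I+M_+(\mu))h$ and a nonvanishing trace is valid but unnecessary. Theorem~\ref{vasp}(b) already gives $\sigma(\hat{\mathcal A})=\mathbb R$, so every open interval $\Delta\subset\mathbb R\setminus\mathcal M$ has $E_{\hat{\mathcal A}}(\Delta)\neq 0$, and (b) puts its range inside $\hat{\mathcal H}_{\rm ac}(\hat{\mathcal A})$. This is how the paper obtains $\mathbb R\setminus\mathcal M\subset\sigma_{\rm ac}(\hat{\mathcal A})$ before the same closure argument you use. Your version avoids the Weyl-sequence argument, at the cost of relying on the joint continuity of the traces.
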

 \begin{proof}
We first prove (a) and (b).   Let $f, g \in {\mathcal H}_{\gamma,\xi},$with   $ \gamma <\xi/2$ and $0 < \xi  <  1/4.$   Then, by Stone's formula (see Theorem VII.13 and the comment in page 264 of \cite{rs1}) for any open interval   
 $(a,b) \subset \mathbb R  \setminus \mathcal M_\pm $ 
 \beq\label{s.73}
 \frac{1}{2}\left( E_{\hat{\mathcal A}}([a,b]) f,g \right)_{\hat{\mathcal H}}+ \frac{1}{2}\left( E_{\hat{\mathcal A}}((a,b)) f,g \right)_{\hat{\mathcal H}}= \lim_{\varepsilon \downarrow 0} \frac{1}{2\pi i} \int_{(a,b)} \, d\mu \left( \left[R_{\hat{\mathcal A}}(\mu+i\varepsilon)- R_{\mathcal A}(\mu- i\varepsilon)\right] f, g \right)_{\hat{\mathcal H}}.
 \ene
Introducing the limit \eqref{s.68} into  the integral in \eqref{s.73}  we get,
 \beq\label{s.74b}\begin{array}{l}
  \frac{1}{2}\left( E_{\hat{\mathcal A}}([a,b]) f,g \right)_{\hat{\mathcal H}}+ \frac{1}{2}\left( E_{\hat{\mathcal A}}((a,b)) f,g \right)_{\hat{\mathcal H}}=   \int_{(a,b)}\left[ 
   \sum_{\nu=\pm} \left[ \sum_{n \in \mathbb Z_0}  \ds \chi_{J_{\nu,0,n}}(\mu) \ds T(\mathcal E_{\nu,0,n}(\mu))
     p_{\nu,0,n}(\mu) \right.\right.\\[.3cm]\left..\left.
 (\mathbf Ff_{\mu \pm i0})_{\nu,0,n}(\mathcal E_{\nu,0,n}(\mu))\overline{(\mathbf Fg_{\mu\pm i0})_{\nu,0,n}(\mathcal E_{\nu,0,n}(\mu))}+\sum_{\upsilon=\pm}\sum_{n \in \mathbb Z_0}  \ds \chi_{J_{\nu,1,n}}(\mu) \ds \right. \right.\\[.3cm]\left. \left.T(\mathcal E_{\nu,1,n}(\mu)) p_{\nu,1,n}(\mu)
 (\mathbf Ff_{\mu \pm i0})_{\nu,1,\upsilon, n}(\mathcal E_{\nu,1,n}(\mu)) \overline{(\mathbf Fg_{\mu\pm i0})_{\nu,1,
 \upsilon,n}(\mathcal E_{\nu,1,n}(\mu))} \right]\right]  \, d\mu.
 \end{array}
\ene
Taking the limit as $ a \uparrow b$ in \eqref{s.74b} we get that $E_{\hat{\mathcal A}}(b)=0,$ for $ b \in \mathbb R
\setminus \mathcal M_\pm.$ Hence $ \hat{\mathcal A}$ has no eigenvalues in $\mathbb R
\setminus \mathcal M_\pm.$  Then, from \eqref{s.74b} we obtain,
 \beq\label{s.74}\begin{array}{l}
 \left( E_{\hat{\mathcal A}}((a,b)) f,g \right)_{\hat{\mathcal H}}=   \int_{(a,b)}\left[ 
   \sum_{\nu=\pm} \left[ \sum_{n \in \mathbb Z_0}  \ds \chi_{J_{\nu,0,n}}(\mu) \ds T(\mathcal E_{\nu,0,n}(\mu))
     p_{\nu,0,n}(\mu) \right.\right.\\[.3cm]\left..\left.
 (\mathbf Ff_{\mu \pm i0})_{\nu,0,n}(\mathcal E_{\nu,0,n}(\mu))\overline{(\mathbf Fg_{\mu\pm i0})_{\nu,0,n}(\mathcal E_{\nu,0,n}(\mu))}+\sum_{\upsilon=\pm}\sum_{n \in \mathbb Z_0}  \ds \chi_{J_{\nu,1,n}}(\mu) \ds \right. \right.\\[.3cm]\left. \left.T(\mathcal E_{\nu,1,n}(\mu)) p_{\nu,1,n}(\mu)
 (\mathbf Ff_{\mu \pm i0})_{\nu,1,\upsilon, n}(\mathcal E_{\nu,1,n}(\mu)) \overline{(\mathbf Fg_{\mu\pm i0})_{\nu,1,
 \upsilon,n}(\mathcal E_{\nu,1,n}(\mu))} \right]\right]  \, d\mu.
 \end{array}
\ene
Since the measure in the right-hand side of \eqref{s.74} is absolutely continuous, we have $ E_{\hat{\mathcal A}}((a,b)) f \subset {{\mathcal H}}_{\rm ac}(\hat{\mathcal A})$ for $f \in \mathcal H_{\gamma,\xi}.$  As $\mathcal H_{\gamma,\xi}$  is dense in $\hat{\mathcal H}$ and as  $\mathcal H_{\rm ac}(\hat{\mathcal A})$ is closed, we get
 \beq\label{s.75}
 E_{\hat{\mathcal A}}((a,b)) \hat{\mathcal H} \subset \mathcal H_{\rm ac}(\hat{\mathcal A}),
 \ene 
 for any open interval  $ (a,b) \subset \mathbb R \setminus \mathcal M_\pm. $ But then, \eqref{s.75} follows for any Borel set 
$\Delta \subset \mathbb R \setminus \mathcal M_\pm.$ Moreover, as $\mathcal M_\pm$  are closed sets of measure zero they can not support absolutely continuous spectrum, and then, by \eqref{s.75}
we get,
 \beq\label{s.75a}
 P_{\rm ac}(\hat{\mathcal A})= E_{\hat{\mathcal A}}(\sigma_{\rm ess}(\hat{\mathcal A}) \setminus \mathcal M_\pm)
= E_{\hat{\mathcal A}}(\mathbb R \setminus \mathcal M_\pm).
 \ene
 Let us denote,
 $$
 A_\pm:= \mathbb R \setminus \mathcal M_\pm.
 $$
 Then, by \eqref{s.75a},
 $$
  E_{\hat{\mathcal A}}(A_+)=  E_{\hat{\mathcal A}} (A_+\cap A_-)+  E_{\hat{\mathcal A}}(A_+\setminus A_-)=
  E_{\hat{\mathcal A}}(A_-)=   E_{\hat{\mathcal A}}(A_-\cap A_+)+  E_{\hat{\mathcal A}}(A_-\setminus A_+).
 $$
 It follows that,
 $$
   E_{\hat{\mathcal A}}(A_+\setminus A_-)=   E_{\hat{\mathcal A}}(A_-\setminus A_+).
 $$
 We denote $ B_+:= A_+\setminus A_-$ and $B _- :=A_-\setminus A_+.$  Since  $B_+\cap B_-= \emptyset,$   we have,
 $$
    E_{\hat{\mathcal A}}(B_+\cup B_-)=    E_{\hat{\mathcal A}}(B_+)+  E_{\hat{\mathcal A}}(B_-)=
    2  E_{\hat{\mathcal A}}(B_+).
 $$
 But then, $ E_{\hat{\mathcal A}}(B_+\cup B_-)=  E_{\hat{\mathcal A}}(B_+\cup B_-)^2= 2 E_{\hat{\mathcal A}}(B_+\cup B_-).$ However, this  is only possible if $E_{\hat{\mathcal A}}(B_+\cup B_-)=0.$ Hence,  $B_+\cup B_-$ belongs to the complement of the support of the spectral measure of $\hat{\mathcal A}.$  Further, as the spectrum of $\hat{\mathcal A}$ is $\mathbb R,$ the support of the spectral measure of $\hat{\mathcal A}$ is $\mathbb R.$ It follows that $B_+\cup B_-= \emptyset.$ Hence
 $B_\pm=\emptyset$ and $\mathcal M_+=\mathcal M_-.$ This proves (a) and  (b) follows from (a) and \eqref{s.75a}.  Let us prove (c).
 We already know that $\mathbb R \setminus \mathcal M \subset \sigma_{\rm ac}(\hat{\mathcal A}).$ Consider $ \beta \in\mathcal M.$ Suppose that $ \beta$ is not a limit point of $\mathbb R \setminus \mathcal M .$ Then, there  would be a neighborhood  of $\beta$ that contains no point of $\mathbb R \setminus \mathcal M.$ This would imply that there is a an open set of positive measure contained in $\mathcal M.$ However, this is
not possible since $\mathcal M$ has measure zero. Hence, every point in $ \mathcal M$ is a limit point of    $\mathbb R \setminus \mathcal M \subset \sigma_{\rm ac}(\hat{\mathcal A}),$ and as the absolutely continuous spectrum is closed, $\mathcal M \subset  \sigma_{\rm ac}(\hat{\mathcal A}).$ This proves (c). Let us prove  (d). We denote by $P_{\rm s}(\hat{\mathcal A})$ the  proyector onto the singular subspace of $\hat{\mathcal A}.$ Then, by \eqref{s.72},   $P_{\rm s}(\hat{\mathcal A})=
E_{\hat{\mathcal A}}(\mathcal M)$ and then, (d) follows.

 \end{proof}
\section{The generalized Fourier maps}
\label{fouma}
In this section we construct the generalized Fourier maps of the Vlasov-Amp\`ere  operator. 
We define first the generalized Fourier maps  $\mathcal F_\pm$    for finite linear combinations of the type,
\beq\label{s.75b}
f= \sum_{j=1}^N E_{\hat{\mathcal A}}(O_j) f^{(j)},
\ene
 where  $f_j \in \mathcal H_{\gamma,\xi},$  with $ \gamma$ and $\xi$ as in Proposition~\ref{prop.stone},       and the $O_j$ are  disjoint intervals, $O_j \cap O_k=\emptyset $ for $j \neq k,$  with $O_j  \subset \sigma_{\rm ac}(\mathcal A)\setminus \mathcal M,$ for $j=1,\dots, N.$  Recall that  $\mathcal H_{\gamma,\xi}$  was defined in \eqref{6.18p} and \eqref{6.18q}. The set of all the functions as in \eqref{s.75b} is dense in $\mathcal H _{\rm ac}(\hat{\mathcal A}).$  For $f$ as in \eqref{s.75b} we define the $\mathcal F_\pm$ as operators from $\hat{\mathcal H}$ into $\tilde{\mathcal H}_0$ as follows (recall that $\tilde{\mathcal H}_0$  was defined in  \eqref{5.94})
 
 \beq\label{s.75c}
\mathcal F_\pm f =\begin{pmatrix} \{\mathcal F_\pm  f\}_{-,n}\\
   \{\mathcal F_\pm f\}_{+,n}  \end{pmatrix}, \qquad n \in \mathbb Z_0,
\ene
where
\beq\label{s.75cc}
 \{\mathcal F_\pm  f\}_{\mp,n}= \left\{\begin{array}{c}  \{\mathcal  F_\pm  f\}_{\mp,0,n}\\
 \{\mathcal  F_\pm  f\}_{\mp,1,+,n}\\\{\mathcal  F_\pm  f\}_{\mp,1,-,n}\end{array}
\right\},\qquad n \in \mathbb Z_0.
\ene
Moreover,  for $\nu=\pm$
\beq\label{s.75e}\begin{array}{l}
  \{\mathcal  F_\nu  f\}_{\mp,0,n}:= \sum_{j=1}^N \chi_{O_j}(\beta)  \chi_{J_{\mp,0,n}}(\mu) \ds \sqrt{T(\mathcal E_{\mp,0,n}(\beta)) |p_{\nu,0,n}(\beta)|}
 (\mathbf Ff^{(j)}_{\beta +\nu\, i0})_{\mp,0,n}(\mathcal E_{\mp,0,n}(\beta)),\\
\{\mathcal  F_\nu  f\}_{\mp,1,+,n}:=\sum_{j=1}^N \chi_{O_j}(\beta) \chi_{J_{\mp,1,n}}(\beta) \ds \sqrt{T(\mathcal E_{\mp,1,n}(\beta)) |p_{\mp,1,n}(\beta)|}
 (\mathbf Ff^{(j)}_{\beta +\nu \, i0})_{\mp,1,+, n}(\mathcal E_{\nu,1,n}(\beta)), \\
 \{\mathcal  F_\nu  f\}_{\mp,1,-,n}:=\sum_{j=1}^N \chi_{O_j}(\beta) \chi_{J_{\mp,1,n}}(\beta) \ds \sqrt{T(\mathcal E_{\mp,1,n}(\beta)) |p_{\mp,1,n}(\beta)|}
 (\mathbf Ff^{(j)}_{\beta + \nu\, i0})_{\mp,1,-, n}(\mathcal E_{\nu,1,n}(\beta)).
 \end{array}
\ene
 Note that  by \eqref{s.75c}-\eqref{s.75e}, for any interval $\Delta  \subset \mathbb R\setminus \mathcal M ,$ and any $f$ as in \eqref{s.75b},
\beq\label{s.75d}
\chi_\Delta (\beta)( \mathcal F_\pm f)(\mu)= (\mathcal F_\pm E_{\hat{\mathcal A}}(\Delta) f)(\beta).
\ene
Then, for every $f,g \in \mathcal H_{\gamma,\xi}$  we can write \eqref{s.74} as
\beq\label{s.76}
\left( E_{\hat{\mathcal A}}((a,b)) f,g \right)_{\hat{\mathcal H}}= \left(\chi_{(a,b)} \mathcal F_\pm f, \mathcal F_\pm g\right)_{\tilde{\mathcal H}_{0}}.
\ene
As \eqref{s.76} holds for every  interval, it also holds for every Borel set, i.e.
\beq\label{s.77}
\left( E(O) f,g \right)_{\hat{\mathcal H}}= \left(\chi_{O} \mathcal F_\pm f, \mathcal F_\pm g\right)_{\tilde{\mathcal H}_{0}},
\ene
for every Borel set $ O \ \in \sigma_{\rm ac}(\hat{\mathcal A})\setminus \mathcal M .$  Further, \eqref{s.77} extends by linearity to all functions of the form \eqref{s.75b}.
In  particular,
 \beq\label{s.78}
\
\left( P_{\rm ac}(\hat{\mathcal A}) f,g \right)_{\hat{\mathcal H}}= = \left(\chi_{(\sigma_{\rm ac}(\hat{\mathcal A})\setminus \mathcal M)} \mathcal F_\pm f, \mathcal F_\pm g\right)_{\tilde{\mathcal H}_{ 0}}=
 \left( \mathcal F_\pm f, \mathcal F_\pm g\right)_{\tilde{\mathcal H}_{0}}.
\ene
Note that \eqref{s.78} implies that the operators $\mathcal F_\pm$ are densely defined and bounded from 
$\mathcal H_{\rm ac}(\hat{\mathcal A})$ into $\tilde{\mathcal H}_{0}.$ We extend $\mathcal F_\pm$ by continuity to bounded operators from $\hat{\mathcal H}_{\rm ac}(A)$ into
$\tilde{\mathcal H}_{o}$ and we denote the extensions by the same symbol  $\mathcal F_\pm.$ Further, we define $\mathcal F_\pm$ by zero on the singular subspace of $\hat{\mathcal A},$
\beq\label{s.79}
\mathcal F_\pm f=0, \qquad f \in \mathcal H_{\rm s}(\hat{\mathcal A}).
\ene
Then, by \eqref{s.78} and \eqref{s.79} the generalized Fourier maps $\mathcal F_\pm$ are partially isometric from $\hat{\mathcal H}$ into
$\tilde{\mathcal H}_{0}$ with initial subspace $\mathcal H_{\rm ac}(\hat{\mathcal A}).$  In the theorem below we prove that they are actually onto $\tilde{\mathcal H}_{0}.$ For this purpose, as mentioned introduction, the spectral representation of $\hat{\mathcal A}_0$ in terms of { \it trace maps}  \eqref{5.103}-\eqref{5.106} plays a crucial role.
The fact that  the $\mathcal F_\pm$ are onto  $\tilde{\mathcal H}_{0}$ is the key point in the proof of the completeness of the wave operators in Theorem~\ref{waop}. 
\begin{theorem} \label{theogf} Suppose that Assumption~\ref{assum0} is satisfied, that    $h'_\pm(v^2/2) v^2 $  is  a integrable function of $v\in \mathbb R,$ and that $h_\pm(\lambda)$ have a continuous second derivative that fulfills  \eqref{6.18qww}.  Then, the generalized Fourier maps $\mathcal F_\pm$ are partially isometric from $\hat{\mathcal H}$ onto $\tilde{\mathcal H}_{0}.$ Furthermore, the  initial subspace of  $\mathcal F_\pm$  is $\mathcal H_{\rm ac}(\hat{\mathcal A})$ and the final subspace is $\tilde{\mathcal H}_{0},$
\beq\label{s.80}
\mathcal F_\pm^\ast \mathcal F_\pm= P_{\rm ac}(\hat{\mathcal A}),\qquad  \mathcal F_\pm \mathcal F_\pm^\ast = I.
\ene
Moreover, for any  Borel function $\phi,$
\beq\label{s.81}
\phi(\hat{\mathcal A}_{\rm ac}) = \mathcal F_\pm ^\ast \, \phi(\mu) \, \mathcal F_\pm,
\ene
where $\hat{\mathcal A}_{\rm ac}$ is the absolutely continuous part of $\hat{\mathcal A},$ and by $\phi(\mu)$ we denote the operator of multiplication by the function  $\phi(\mu).$
\beq\label{s.82}
\phi(\mu) \{f_l\}=  \{  \varphi(\mu) f_l(\mu)\}.
\ene
In particular,
\beq\label{s.82b}
\hat{\mathcal A}_{\rm ac}= \mathcal F_\pm ^\ast \,\mu \, \mathcal F_\pm,
\ene
that it to say, $\mathcal F_\pm$ diagonalize $\mathcal A_{\rm ac}.$

\end{theorem}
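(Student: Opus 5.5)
The partial isometry property and the first identity in \eqref{s.80} are already contained in \eqref{s.78}--\eqref{s.79}: $\mathcal F_\pm^\ast\mathcal F_\pm = P_{\rm ac}(\hat{\mathcal A})$. The intertwining relation \eqref{s.81} also follows with little effort. By \eqref{s.77}, for Borel $O\subset\mathbb R\setminus\mathcal M$ we have $E_{\hat{\mathcal A}}(O)P_{\rm ac}(\hat{\mathcal A}) = \mathcal F_\pm^\ast\chi_O\mathcal F_\pm$, first on the dense set \eqref{s.75b} and then by continuity. Since $\mathcal M$ has measure zero, this holds for all Borel $O$. Approximating a Borel $\phi$ by simple functions and using the functional calculus gives \eqref{s.81}, and \eqref{s.82b} is the case $\phi(\mu)=\mu$. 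Combining \eqref{s.81} with $\chi_O$ also gives $\mathcal F_\pm E_{\hat{\mathcal A}}(O)=\chi_O\mathcal F_\pm$. It follows that the range $\mathcal R_\pm:=\mathcal F_\pm\hat{\mathcal H}$ is a closed subspace of $\tilde{\mathcal H}_0$, closed because $\mathcal F_\pm$ is a partial isometry, and invariant under multiplication by every bounded Borel function of $\beta$. The whole content of the theorem is therefore surjectivity, $\mathcal F_\pm\mathcal F_\pm^\ast=I$.

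For surjectivity, I will compare $\mathcal F_\pm$ with the unperturbed trace-map representation \eqref{5.103}. The plan is to show, for $f\in\mathcal H_{\gamma,\xi}$ and an interval $\Delta$ as in \eqref{5.100} with $\overline\Delta\subset\mathbb R\setminus\mathcal M$, a stationary formula of the form
\beq\label{planA}
\chi_\Delta(\beta)\,\mathcal L_{j_-,j_+}(\beta)\,\mathbf F P_{\rm ac}(\hat{\mathcal A}_0)f = \chi_\Delta(\beta)\,(\mathcal F_\pm g)(\beta), \qquad g:=(I+M_\pm)\,\cdot\,,
\ene
where $g$ is read off from the trace of $f$ itself. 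More concretely, I would use the identity $f=(I+M_\pm(\beta\pm i0))f_{\beta\pm i0}$, which comes from \eqref{s.67}. Inverting this identity at the level of traces, via $(I+M_\pm(\beta\pm i0))^{-1}$, which exists on $\mathbb R\setminus\mathcal M$ by Theorem~\ref{invert}, expresses $\mathcal F_0$-traces as $\mathcal F_\pm$-traces of perturbed vectors. This is the analogue of the classical argument in \cite{ya} and in \cite{wegal}. The goal is to show that $\chi_\Delta\mathcal L_{j_-,j_+}(\beta)\mathbf F f$ belongs to $\mathcal R_\pm$ for every $f\in\mathcal H_{\gamma,\xi}$. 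Since $\mathcal F_0$ is onto $\tilde{\mathcal H}_0$ by Proposition~\ref{propgf}, and $\mathcal H_{\gamma,\xi}$ is dense, the vectors $\chi_\Delta\mathcal F_0 f$ are dense in $\chi_\Delta\tilde{\mathcal H}_0$. Taking unions over countably many such $\Delta$, which exhaust $\mathbb R$ up to the null set $\mathcal M$ together with the discrete thresholds, then gives $\mathcal R_\pm=\tilde{\mathcal H}_0$.

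An alternative route to the same inclusion, which I would try if the direct trace identity becomes messy, goes through the adjoint. Suppose $\psi\in\tilde{\mathcal H}_0$ with $\psi\perp\mathcal R_\pm$. By the invariance of $\mathcal R_\pm$ under multiplication by bounded Borel functions, $\chi_O\psi\perp\mathcal R_\pm$ for every Borel $O$. Then, for all $f\in\mathcal H_{\gamma,\xi}$ and almost every $\beta$, we get the pointwise relation $\langle \mathcal L(\beta)\mathbf F f_{\beta\pm i0},\psi(\beta)\rangle=0$. Because $f\mapsto f_{\beta\pm i0}$ is a bijection of $\mathcal H_{\gamma,\xi}$ for $\beta\notin\mathcal M$, this yields $\langle\mathcal L(\beta)\mathbf F h,\psi(\beta)\rangle=0$ for all $h\in\mathcal H_{\gamma,\xi}$. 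Integrating over $\beta$ gives $\psi\perp\mathcal F_0\mathcal H_{\gamma,\xi}$, which is dense in $\tilde{\mathcal H}_0$, so $\psi=0$.

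The main obstacle is making the pointwise-in-$\beta$ step rigorous. This requires three things. First, the trace map $h\mapsto\mathcal L(\beta)\mathbf F h$ must be continuous on $\mathcal H_{\gamma,\xi}$, uniformly for $\beta$ in compact subsets of $\Delta$; this uses the H\"older regularity in $\mathcal E$ built into \eqref{6.18q} and the bounds \eqref{6.40}. Second, $\beta\mapsto f_{\beta\pm i0}$ must be continuous into $\mathcal H_{\gamma,\xi}$, which follows from Theorem~\ref{theolim}(c) and Theorem~\ref{invert}. Third, one must pass from ``for each $h$, almost every $\beta$'' to ``almost every $\beta$, all $h$''. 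For this I would use a countable dense subset of the separable space $\hat C$-part together with the continuity just described.
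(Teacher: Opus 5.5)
Your argument is correct. Your second (adjoint) route is a streamlined variant of the paper's proof rather than the same proof. Your derivation of \eqref{s.81} from \eqref{s.77} plus functional calculus is equivalent to the paper's Stone-formula argument.

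For surjectivity, both proofs use the same three ingredients: \eqref{s.83}, the bijectivity of $I+M_\pm(\beta\pm i0)$ on $\mathcal H_{\gamma,\xi}$ off $\mathcal M$, and the completeness of the unperturbed trace representation. They use them in a different order.

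\emph{The paper's order.} It first proves the fibrewise lemma \eqref{s.83c}: for a.e.\ $\beta\in\Delta$, the perturbed traces $\mathcal L_{j_-,j_+}(\beta)\mathbf F(I+M_\pm(\beta))^{-1}\mathcal H_{\gamma,\xi}$ are dense in $Q_{j_-,j_+}$. By bijectivity this reduces to density of the unperturbed traces of the auxiliary Hilbert space $\mathcal H_{\rm H}$, \eqref{s.84e}. That is proved by contradiction, via the strongly measurable projection field $I-P(\beta)=E_{K(\beta)}(\{0\})$ with $K(\beta)=D(\beta)D(\beta)^\ast$, together with density of the unperturbed traces in $L^2(\Delta,Q_{j_-,j_+})$. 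Only then does the paper test $h\perp{\rm Range}\,\mathcal F_\pm$ against an orthonormal basis of $\mathcal H_{\rm H}$.

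\emph{Your order.} You pull the orthogonality of $\psi(\beta)$ back to unperturbed traces and integrate in $\beta$. This gives $\psi\perp\chi_\Delta\mathcal F_0\mathcal H_{\gamma,\xi}$, and you invoke Proposition~\ref{propgf} directly.

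\emph{What each buys.} Your route avoids the measurable-projection construction and the space $\mathcal H_{\rm H}$. The paper's route yields the stronger fibrewise density statement as a by-product. Your cost is exactly what you flagged, and both facts hold:
\begin{itemize}
\item Uniform-in-$\beta$ boundedness of $\mathcal L_{j_-,j_+}(\beta)\mathbf F$ on $\mathcal H_{\gamma,\xi}$. This holds because, by Proposition~\ref{proper}, the energies $\mathcal E_{\mp,\cdot,n}(\beta)$ approach the separatrix exponentially fast in $n$. H\"older continuity in $\mathcal E$ then makes the $n$-sum converge.
\item Continuity of $\beta\mapsto(I+M_\pm(\beta\pm i0))h$. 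This follows from Theorem~\ref{theolim}(c) and Theorem~\ref{invert}.
\end{itemize}

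\textbf{Two repairs are needed.}

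\emph{Separability.} $\mathcal H_{\gamma,\xi}$ is not separable, because its $G_\mp$ components in \eqref{6.18q} use $C_\gamma$, not $\hat C_\gamma$. So a countable dense subset cannot give the relation for all $h$ at a.e.\ $\beta$. You only need it for each fixed $h$, however. For that, take a countable dense subset of the compact curve $\{(I+M_\pm(\beta\pm i0))h:\beta\in\overline\Delta\}$. Then use that the functionals $f\mapsto\langle\mathcal L_{j_-,j_+}(\beta)\mathbf F(I+M_\pm(\beta\pm i0))^{-1}f,\psi(\beta)\rangle$ are bounded.

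\emph{Your first route.} The displayed formula there does not define a vector, since $M_\pm(\beta\pm i0)$ varies with $\beta$. The route does work via Riemann sums. The vectors $\sum_k\mathcal F_\pm E_{\hat{\mathcal A}}(\Delta_k)(I+M_\pm(\beta_k\pm i0))h$ lie in ${\rm Range}\,\mathcal F_\pm$. By \eqref{s.83} and the same two continuity facts, they converge to $\chi_\Delta\mathcal F_0h$ in $L^2(\Delta,Q_{j_-,j_+})$ as the partition $\{\Delta_k\}$ of $\Delta$ is refined. Closedness of the range then gives the inclusion with no null-set exchange at all.
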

\begin{proof}
We already know that the generalized Fourier maps $\mathcal F_\pm$ are partially isometric from $\hat{\mathcal H}$ into $\tilde{\mathcal H}_{o}$ with initial subspace $\mathcal H_{\rm ac}(\hat{\mathcal A}).$ It remains to prove that they are onto  $\tilde{\mathcal H}_{0}.$ For this purpose, note that  by   \eqref{5.103}-\eqref{5.106} for any closed interval  $\Delta \subset \mathbb R \setminus \mathcal M $ that satisfies \eqref{5.100} and  for  $f \in \mathcal H_{\gamma,\xi},$ with $\gamma < \xi/2$ and $ \xi <  1/4.$
\beq\label{s.83}
\left(\mathcal F_\pm E_{\hat{\mathcal A}} (\Delta) f\right)(\beta)= \chi_{\Delta}(\beta) \mathcal L_{j_-,j_+}(\beta) \mathbf F(I+M_\pm(\beta))^{-1} \,f.
\ene
Let us prove that for almost every $ \beta $ in ${\Delta}$ 
\beq\label{s.83c}
\hbox{\rm closure}\left( \mathcal L_{j_-,j_+}(\beta) \mathbf F(I+M_\pm(\beta))^{-1}  \mathcal H_{\gamma,\xi}\right) = Q_{j_-,j_+},
 \ene
 where, if  $\Delta \subset (0,\infty)$
 \beq\label{s.83ca}
 Q_{j_-,j_+}:= l_{j_-}\oplus l_{1}\oplus l_{1}\oplus l_{j_+}\oplus l_1\oplus l_1,
 \ene
 and if $ \Delta \subset (-\infty,0)$
  \beq\label{s.83cb}
 Q_{j_-,j_+}:= l_{-j_-}\oplus l_{-1}\oplus l_{-1}\oplus l_{-j_+}\oplus l_{-1}\oplus l_{-1}.
 \ene
 
   Since   $ p_{\mp,\upsilon, n}$ and  T$_{\mp,\upsilon},$ for $\upsilon=0,1$  are different from zero,  and and as $I+M_\pm(\beta)$ is a bijection in $\mathcal H_{\gamma,\xi}$ it is enough to prove that

\beq\label{s.83ez}
\hbox{\rm closure} \left(\mathcal L_{j_-,j_+,r}(\beta) \mathbf F \mathcal H_{\gamma,\xi} \right)=Q_{j_-,j_+},
 \ene
for almost every $ \beta$ in ${\Delta},$ and where $\mathcal L_{j_-,j_+,r}(\beta)$ is the operator, 
\beq\label{s.84}
\mathcal L_{j_-,j_+,r}(\beta) B=\begin{pmatrix} (\mathcal L_{j_-,j_+,r}(\beta) B)_-\\(\mathcal L_{j_-,j_+,r}(\beta) B)_+
\end{pmatrix},
\ene
where, if $ \Delta \subset (0,\infty),$  
\beq\label{s.84b}
  (\mathcal L_{j_-,j_+,r}(\beta) B)_\mp  := \begin{pmatrix}  \{  b_{\mp,0,n_\mp}(\beta) \}, n_\mp= j_\mp, j_\mp+1, \dots \\
      \{   b_{\mp,1, +,n}(\beta)\}, n=1,2,\dots\\  \{b_{\mp,1,-,n}(\beta)\}, n=1,2,\dots
   \end{pmatrix},
   \ene
   and if $ \Delta \subset (-\infty,0),$  
\beq\label{s.84c}
  (\mathcal L_{j_-,j_+,r}(\beta) B)_\mp  := \begin{pmatrix}  \{  b_{\mp,0,n_\mp}(\beta) \}, n_\mp= -j_\mp,  -j_\mp-1, \dots\\
      \{  b_{\mp,1,+,n}(\beta)\}, n=-1,-2,\dots\\  \{ b_{\mp,1,-,n}(\beta)\}, n=-1,-2,\dots
   \end{pmatrix}.
   \ene
   We introduce the following Hilbert space that is appropriate for our purposes,
   \beq\label{6.84c1}
{\mathcal H}_{\rm H}:= {\mathcal H}_{-,\rm H}\oplus {\mathcal H}_{+,\rm H} \oplus  H^{1,0}((-P/2, P/2)),
\ene
where
\beq\label{6.84c2}\begin{array}{l}
{\mathcal H}_{\mp, \rm H}:=L^2(I_{\mp,0},L^2(S_1);T_{\mp,0}(\mathcal E_\mp) \,d\mathcal E_\mp )\cap H_{1}(I_{\mp,0}, L^2(S_1))\oplus\\ 
L^2(I_{\mp,1}, L^2(S_1); T_{\mp,1}(\mathcal E_\mp)\, d\mathcal E_\mp) \cap H_{1}(I_{\mp,1}, L^2(S_1)) \oplus\\ [0.2cm] L^2(I_{\mp,1}, L^2(S_1); T_{\mp,1}(\mathcal E_\mp)\, d\mathcal E_\mp ) \cap H_{1}(I_{\mp,1}, L^2(S_1)).
\end{array}
\ene
Clearly, $ \mathcal  H_{\rm H} \subset \mathcal H_{\gamma, \xi}$ and the imbedding is continuous. 
We denote,
\beq\label{s.84d}
D_{j_-,j_+}(\beta):=  L_{j_-,j_+,r}(\beta) \mathbf F,\qquad D[D_{j_-,j_+}(\beta)]= \mathcal H_{\rm H}.
\ene
Remark that  $D_ {j_-,j_+}(\beta)$ is bounded from $\mathcal H_{\rm H}$ into $Q_{j_-,j_+}$ and that it is continuous in $\beta$ in operator norm. 
Then, to prove \eqref{s.83ez} it is enough to prove, 
\beq\label{s.84e}
\hbox{\rm closure}\, \left({\rm Range }\,D_{j_-,j_+}(\beta)\right)= Q_{j_-,j_+}
\ene
 for almost every $\beta$ in $\Delta.$ Let $P(\beta)$ be the projector onto the closure of ${\rm Range } \, D_{j_-,j_+}(\beta).$ We denote  by  $e_j, j=1,2,\dots,$ an orthonormal basis of $Q_{j_-,j_+}.$ Assume that  \eqref{s.84e}  does not hold for almost every $\beta$  in ${\Delta}.$ Then, there has to be  a $j$ such that
  $$
  g_j(\beta):= (I-P(\beta))e_j,
   $$
is different from zero in a set of positive measure $ O \subset {\Delta}.$ Let us prove that $g_j$ is a measurable function. For this purpose, we prove that $ I-P(\beta)$ is strongly measurable. 
Remark  that  $ I-P(\beta)$ is the projector onto the kernel of $D(\beta)^\ast$ as an operator from $Q_{j_-,j_+}$ into $ \mathcal H_{\rm H}.$ Let us denote 
$$
K(\beta):= D(\beta) D(\beta)^\ast.
$$
By Stone's formula  (see Theorem VII.13, and the comment in page 264 of \cite{rs1})  
$$
( I-P(\beta)) f = E_{K(\beta)}(\{0\})f=  \lim_{\varepsilon \downarrow 0} \frac{1}{\pi i} \int_{-1}^0 [R_{K(\beta)}(\lambda+i\varepsilon)-
R_{K(\beta)}(\lambda-i\varepsilon)]f d\lambda , \qquad f \in  Q_{j_-,j_+}.
 $$ 
It follows that  $ I-P(\beta)$ is  strongly measurable. Hence,  $g_j$ is measurable, and then, $g_j \in L^2({\Delta}, Q_{j_-,j_+}).$ Moreover, as $g_j(\beta)$ is orthogonal to the closure of  ${\rm Range } \,D_{j_-,j_+}(\beta)$ for almost every $\beta,$
$$
\left( g_j, \mathcal L_{j_-,j_+,r}(\beta) \mathbf F f\right)_{L^2({\Delta}, Q_{j_-,j_+})}=0, \qquad f \in  \mathcal H_{\rm H}.
$$
Further, as  the set $ \mathcal L_{j_-,j_+,r}(\beta) \mathbf F f,$ with $f \in \mathcal H_{\rm H}
$ is dense in $L^2({\Delta}, Q_{j_-,j_+}),$ the function $g_j(\beta)$ is zero for almost every $ \beta \in \Delta.$ This contradiction proves that 
\eqref{s.84e} holds and then, \eqref{s.83c} is also valid.
 Suppose now that some $ h \in \tilde{\mathcal H}_{0}$ is orthogonal to the range  of $\mathcal F_\pm.$ Let  $\{f_l\}_{l=1}^\infty$ be an orthonormal basis in $ \mathcal H_{\rm H}.$ Then, for every  interval  $\tilde{\Delta}$
  contained in the interior of $\Delta,$ 
\beq\label{s.86}
\left( \mathcal F_\pm E_{\hat{\mathcal A}}(\tilde{\Delta}) f_l,h \right)_{\tilde{\mathcal H}_{0}}= \int_{\tilde{\Delta}} \left( \mathcal L_{j_-,j_+}(\beta) \mathbf F(I+M_\pm(\beta))^{-1}  f_l,  h\right)_{ Q_{j_-,j_+}}=0.
\ene
Hence,
$$\left(  \mathcal L_{j_-,j_+}(\beta) \mathbf F (I+M_\pm(\beta))^{-1}   f_l,  h\right)_{Q_{j_-,j_+}}=0, \qquad  \text{\rm a.e} \, \beta \in \Delta,   l=1,\dots
$$
and by \eqref{s.83c}
$h(\beta)=0,$  for almost every  $\beta \in \Delta $. As this holds for every interval   $\Delta\in \mathbb R \setminus \mathcal M $ that satisfies \eqref{5.100} we get that $h=0.$ 
This completes the proof that the $\mathcal F_\pm$ are onto $\tilde{\mathcal H}_{0}$ and that \eqref{s.80} holds. 
Furthermore, by \eqref{s.68},
\eqref{s.75c}-\eqref{s.75d},  Stone's formula (see Theorem VII.13, and the comment in page 264 of \cite{rs1}),  the spectral theorem (see Section five of Chapter six of \cite{kato} and Theorem VIII.6 in page 263 of \cite{rs1})  for any intervals
 $\Delta_j\in \mathbb R\setminus \mathcal M,$ for $j=1,2,$  any $f_j\in \mathcal H_{\gamma,\xi},$  and for any bounded Borel  function $\phi$
\beq\label{s.86b}
\left( \phi(\hat{\mathcal A}) E_{\hat{\mathcal A}}(\Delta_1) f_1,   E_{\hat{\mathcal A}}(\Delta_2) f_2\right)_{\hat{\mathcal H}}=
\left( \phi(\cdot) \mathcal F_\pm   E_{\hat{\mathcal A}}(\Delta_1) f_1, \mathcal F_\pm E_{\hat{\mathcal A}}(\Delta_2)f_2\right)_{\tilde{\mathcal H}_{0}}.
\ene
Moreover, since the set of functions \eqref{s.75b} is dense in $\mathcal H_{\rm ac}(\hat{\mathcal A}),$ we have
\beq\label{s.87b}
\left( \phi(\hat{A})  f_1,  f_2\right)_{\tilde{\mathcal H}}=
\left( \phi(\cdot) \mathcal F_\pm f_1, \mathcal F_\pm f_2\right)_{\tilde{\mathcal H_0}}, \qquad f_1, f_2 \in \mathcal H_{\rm ac}(\hat{\mathcal A}).
\ene
Equation \eqref{s.81} in the case where $\phi$ is  bounded follows from \eqref{s.87b}. In the general case where $\phi $ is not bounded,  \eqref{s.81} follows from \eqref{s.81} in the bounded case, approximating $\phi$ by 
$ \phi_n$ where $ \phi_n(\lambda)= \phi(\lambda)$ if $|\phi(\lambda)| \leq n$ and $ \phi_n(\lambda)=0,$ if 
$|\phi(\lambda)|  >n,$ for $n=1,\dots.$
\end{proof}

\section{The wave operators}\label{wave}
In this section we prove the existence and the completeness of the wave operators. We recall that the wave operators for the pair $(\hat{\mathcal A}, \hat{\mathcal A}_0)$ are said to be complete if their range is $\hat{\mathcal H}_{\rm ac} ({\hat{\mathcal A}}).$ See Definition 1 in Section 3 of Chapter 2 of \cite{ya}. We also prove  Birman's invariance principle of the wave operators. We first prove that the weak Abelian wave operators exist and  are parcial isometries with initial subspace $\tilde{\mathcal H}_0$ and  final subspace $\hat{\mathcal H}_{\rm ac}({\hat{\mathcal A}}).$ The existence and the completeness of the wave operators follow from this result. Along the way, we also prove the stationary formulae for the wave operators. 
We begin defining the weak Abelian wave operators   as in equation 11 in page 76 of \cite{ya}
\beq\label{wa.2}
\Omega_\pm:= {\rm w-} \lim_{\varepsilon \downarrow 0} \int_0^\infty w_\varepsilon(t) P_{\rm ac}(\hat{\mathcal A})
e^{\pm it \hat{\mathcal A}} e^{\mp it\hat{\mathcal A}_0}  P_{\rm ac}(\hat{\mathcal A}_0)
 dt,
\ene
if the limit exits, where we take $w_\varepsilon(t)= 2 \varepsilon e^{-2\varepsilon t},$     and  ${\rm w-} \lim_{\varepsilon \downarrow 0}$ means the weak limit as $\varepsilon \downarrow 0.$ This is equivalent to,
\beq\label{wa.2b}
\left(\Omega_\pm f, g\right)_{\hat{\mathcal H}}:=  \lim_{\varepsilon \downarrow 0} \int_0^\infty w_\varepsilon(t) 
\left(e^{\pm it \hat{\mathcal A}} e^{\mp it\hat{\mathcal A}_0} f, g\right)_{\hat{\mathcal H}}\, dt,
\ene
for $f\in \hat{\mathcal H}_{\rm ac}(\hat{\mathcal A}_0),$ and $ g\in \hat{\mathcal H}_{\rm ac}(\hat{\mathcal A}).$

In the following theorem we prove that the weak Abelian wave operators exist, that they are given by the stationary formulae, and that they are  parcial isometries with initial subspace  $\hat{\mathcal H}_{\rm ac}(\hat{\mathcal A}_0)$ and  final subspace $\hat{\mathcal H}_{\rm ac}({\hat{\mathcal A}}).$  
\begin{theorem}\label{pwa.1}  Suppose that Assumption~\ref{assum0} is satisfied, that    $h'_\pm(v^2/2) v^2 $  is  a integrable function of $v\in \mathbb R,$ and that $h_\pm(\lambda)$ have a continuous second derivative that fulfills  \eqref{6.18qww}.
  Then, weak Abelian wave operators  \eqref{wa.2}  exist, and
\beq\label{wa.3a}
\Omega_\pm = \mathcal F_\pm^\ast {\mathcal F}_0.
\ene
Moreover the $\Omega_\pm$ are partially isometric with initial subspace $ \hat{\mathcal H}_{\rm ac}(\hat{\mathcal A}_0)$ and final subspace $\hat{\mathcal H}_{\rm ac}(\hat{\mathcal A}).$ Recall that $\mathcal F_0$ is defined in Proposition~\ref{propgf}.

\end{theorem}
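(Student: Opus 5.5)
The plan is the standard stationary route of Yafaev: compute the Abelian integral in \eqref{wa.2b} in terms of resolvents, insert the resolvent identity \eqref{s.66}, and identify the boundary values with the generalized Fourier maps of Theorem~\ref{theogf} and the map $\mathcal F_0$ of Proposition~\ref{propgf}.

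\emph{Step 1 (reduction to a dense set).} Since all operators involved are uniformly bounded in $\varepsilon$, it suffices to prove the existence of the limit in \eqref{wa.2b} and the identity \eqref{wa.3a} on dense sets. I take $f = E_{\hat{\mathcal A}_0}(\Delta_1) f_1$ and $g = E_{\hat{\mathcal A}}(\Delta_2) g_1$, with $f_1, g_1 \in \mathcal H_{\gamma,\xi}$ and $\Delta_1,\Delta_2$ compact intervals in $\mathbb R\setminus\mathcal M$ satisfying \eqref{5.100}. These sets are dense in $\hat{\mathcal H}_{\rm ac}(\hat{\mathcal A}_0)$ and in $\hat{\mathcal H}_{\rm ac}(\hat{\mathcal A})$, by \eqref{5.93} and Theorem~\ref{theoproj}. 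The reduction then uses the uniform bound $\|\int_0^\infty w_\varepsilon(t)\,\cdots\,dt\| \le 1$.

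\emph{Step 2 (Abelian integral as a resolvent pairing).} Write $e^{\mp it\hat{\mathcal A}_0} f = \int e^{\mp it\beta}\, dE_{\hat{\mathcal A}_0}(\beta) f$ and integrate in $t$ first, using $\int_0^\infty 2\varepsilon e^{-2\varepsilon t} e^{\pm it(\hat{\mathcal A}-\beta)}\,dt = \mp 2i\varepsilon\, R_{\hat{\mathcal A}}(\beta \pm 2i\varepsilon)$ (up to sign conventions). This gives
\[
(\Omega_{\pm,\varepsilon} f, g) = \int \bigl(\mp 2i\varepsilon\, R_{\hat{\mathcal A}}(\beta\pm i\varepsilon')\, dE_{\hat{\mathcal A}_0}(\beta) f, g\bigr),
\]
where $\varepsilon' = 2\varepsilon$.

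\emph{Step 3 (resolvent identity and limit).} By \eqref{s.66} and the adjoint, $R_{\hat{\mathcal A}}(z)^\ast g$ can be written as $R_{\hat{\mathcal A}_0}(\bar z)$ applied to $(I+M(\bar z))^{-1\ast}$-type expressions. The cleaner route is the second resolvent identity in the form
\[
R_{\hat{\mathcal A}}(z) = R_{\hat{\mathcal A}_0}(z) - R_{\hat{\mathcal A}_0}(z)\,(I+M(z))^{-1}\hat{\mathcal V}R_{\hat{\mathcal A}_0}(z).
\]
Passing to the representation $\mathcal F$ of \eqref{5.89}--\eqref{5.91}, the term $2\varepsilon R_{\hat{\mathcal A}_0}(\beta\pm i\varepsilon')$ tested against the spectral measure of $\hat{\mathcal A}_0$ produces a Poisson kernel $\frac{\varepsilon}{\pi}\frac{1}{(\mu-\beta)^2+\varepsilon^2}$, exactly as in \eqref{s.69}. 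Then Theorems 7 and 8 of \cite{ya} (Poisson-kernel approximations of identity), together with the continuity of $(I+M_\pm(\beta\pm i0))^{-1}$ on $\Delta_2$ from Theorems~\ref{theolim} and~\ref{invert}, give the limit
\[
\int_{\Delta_1\cap\Delta_2} \bigl(\mathcal F_0 f(\beta),\, \mathcal L_{j_-,j_+}(\beta)\mathbf F (I+M_\pm(\beta))^{-1} g_1\bigr)\, d\beta .
\]
By \eqref{s.83} this equals $(\mathcal F_0 f, \mathcal F_\pm g)_{\tilde{\mathcal H}_0}$, which proves \eqref{wa.3a} on the dense set and hence everywhere.

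\emph{Step 4 (partial isometry).} By Proposition~\ref{propgf}, $\mathcal F_0$ is a partial isometry from $\hat{\mathcal H}_{\rm ac}(\hat{\mathcal A}_0)$ onto $\tilde{\mathcal H}_0$. By Theorem~\ref{theogf}, $\mathcal F_\pm^\ast$ is an isometry from $\tilde{\mathcal H}_0$ onto $\hat{\mathcal H}_{\rm ac}(\hat{\mathcal A})$. Their composition $\mathcal F_\pm^\ast\mathcal F_0$ is therefore a partial isometry with the stated initial and final subspaces.

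\emph{Main obstacle.} The hard part is Step 3: justifying the interchange of the $\varepsilon \downarrow 0$ limit with the $\beta$-integration. This needs the pointwise and $L^1$ convergence of boundary values, uniformly on compact subintervals of $\Gamma\setminus\mathcal M$, as provided by Proposition~\ref{prop.stone}. It also requires matching which sign $\pm i0$ appears for $\Omega_\pm$, so that the correct map $\mathcal F_\pm$ results. I would handle this by mimicking the proof of Proposition~\ref{prop.stone} with one factor replaced by the free quantity $\mathbf F\,\cdot$.
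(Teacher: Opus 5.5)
Your architecture (dense-set reduction, a resolvent representation of the Abelian mean, limiting absorption, identification through \eqref{s.83}) is sound, and Step~4 is exactly the paper's argument. However, Step~3 carries all the content, it does not fit your own Step~2, and it would not go through as written.

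\textbf{Sign.} The Abelian kernel for $\Omega_\pm$ is $\pm 2i\varepsilon R_{\hat{\mathcal A}}(\beta\mp 2i\varepsilon)$. After moving it onto $g$ you get $\mp 2i\varepsilon R_{\hat{\mathcal A}}(\beta\pm 2i\varepsilon)g$. With your sign you would land on $\mathcal F_\mp$ rather than $\mathcal F_\pm$.

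\textbf{No Poisson kernel.} Your formula contains a single resolvent, and its spectral parameter is the same $\beta$ that integrates against $dE_{\hat{\mathcal A}_0}(\beta)f$. Write $R_{\hat{\mathcal A}}(z)=R_{\hat{\mathcal A}_0}(z)(I+M(z))^{-1}$; this is just \eqref{s.66} at the conjugate point, and no adjoint of $(I+M)^{-1}$ is involved. The free factor $\mp 2i\varepsilon R_{\hat{\mathcal A}_0}(\beta\pm 2i\varepsilon)$ is then evaluated on the diagonal of the spectral representation, where it is identically $1$. The Poisson kernel comes from $\varepsilon R_{\hat{\mathcal A}_0}(\bar z)R_{\hat{\mathcal A}_0}(z)$, that is, from the symmetric two-resolvent formula \eqref{wa.3}, which the paper obtains from Lemma~1, p.~92 of \cite{ya}. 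What you describe under ``Main obstacle'' is the paper's argument for that formula, not for yours.

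\textbf{Wrong identity.} The ``cleaner'' identity $R_{\hat{\mathcal A}}=R_{\hat{\mathcal A}_0}-R_{\hat{\mathcal A}_0}(I+M)^{-1}\hat{\mathcal V}R_{\hat{\mathcal A}_0}$ is the wrong one here. Paired with $dE_{\hat{\mathcal A}_0}(\beta)f$, it requires $\hat{\mathcal V}$ to map generalized eigenfunctions of $\hat{\mathcal A}_0$ into $\mathcal H_{\gamma,\xi}$, or, after taking adjoints, boundary values of $((I+M(z))^{-1})^\ast$. Nothing in Theorems~\ref{theolim}--\ref{invert} provides either.

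\textbf{Projected $g$.} With $g=E_{\hat{\mathcal A}}(\Delta_2)g_1$ you must apply $(I+M)^{-1}$ to $E_{\hat{\mathcal A}}(\Delta_2)g_1$, which is not known to lie in $\mathcal H_{\gamma,\xi}$. So the traces $\mathcal L_{j_-,j_+}(\beta)\mathbf F(\cdot)$ are not available. Your final formula silently replaces this vector by $g_1$ together with $\chi_{\Delta_2}$.

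The repair stays inside your one-resolvent framework and is simpler than the paper's route. Take $f=E_{\hat{\mathcal A}_0}(\Delta_1)f_1$ and test directly against $g\in\mathcal H_{\gamma,\xi}$, which is dense in $\hat{\mathcal H}$. The projector $P_{\rm ac}(\hat{\mathcal A})$ is harmless at the end, because the range of $\mathcal F_\pm^\ast$ lies in $\hat{\mathcal H}_{\rm ac}(\hat{\mathcal A})$. For fixed $\varepsilon>0$ the map $\beta\mapsto R_{\hat{\mathcal A}}(\beta\pm 2i\varepsilon)$ is norm continuous, so your Stieltjes integral is a limit of Riemann sums. Then \eqref{s.66}, \eqref{5.103} and the joint continuity of traces used in Proposition~\ref{prop.stone} give
\[
(\Omega_{\pm,\varepsilon}f,g)=\int_{\Delta_1}\bigl((\mathcal F_0 f)(\beta),\,\mathcal L_{j_-,j_+}(\beta)\mathbf F\,(I+M(\beta\pm 2i\varepsilon))^{-1}g\bigr)_{Q_{j_-,j_+}}\,d\beta .
\]
Since $\Delta_1$ is a compact subset of $\mathbb R\setminus\mathcal M$, Theorems~\ref{theolim} and~\ref{invert} together with stability of bounded invertibility give $(I+M(\beta\pm 2i\varepsilon))^{-1}\to(I+M_\pm(\beta))^{-1}$ in $\mathcal B(\mathcal H_{\gamma,\xi})$, uniformly on $\Delta_1$. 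The limit is therefore $(\mathcal F_0 f,\chi_{\Delta_1}\mathcal F_\pm g)=(\mathcal F_0 f,\mathcal F_\pm g)$ by \eqref{s.83} and \eqref{s.75d}.

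This version needs neither Theorems~7--8 of \cite{ya} nor the separate proof, as in \eqref{wa.3bxx}--\eqref{wa.3ccee}, that the blocks $j\neq n$ vanish. The paper's symmetric formula \eqref{wa.3} forces both of those steps. The paper also ends up applying $(I+M)^{-1}$ to $E_{\hat{\mathcal A}}(\Delta_j)g$, which you avoid by not projecting $g$.
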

\begin{proof}  Note that as $\mathcal F_0$ is  partially isometric with initial subspace  $\hat{\mathcal H}_{\rm ac}({\hat{\mathcal A}_0})$ and final subspace $\tilde{\mathcal H}_0$  and    $\mathcal F_\pm^\dagger$  are unitary from 
 $\tilde{\mathcal H}_0$ onto $\hat{\mathcal H}_{\rm ac}({\hat{\mathcal A}}),$ it is immediate that if  \eqref{wa.3a} holds    the $\Omega_\pm$ are partially isometric with initial subspace  $\hat{\mathcal H}_{\rm ac}({\hat{\mathcal A}_0})$ and  final subspace $\hat{\mathcal H}_{\rm ac}(\hat{ \mathcal A}).$ 
 We proceed to prove   \eqref{wa.3a}. We begin by proving \eqref{wa.2b}.
Since $ e^{\pm it \mathcal A} e^{\mp it\mathcal A_0}$ is unitary it is enough to prove that the limits in \eqref{wa.2b} exist in  dense sets. Then, we can replace $g$ in \eqref{wa.2b} by $ P_{\rm ac}(\hat{\mathcal A})g, $ with $g \in \mathcal H_{\gamma,\xi},$  with $1/4 > \xi > 2\gamma>0.$  Recall that  $\mathcal H_{\gamma,\xi}$ was defined in \eqref{6.18p} and \eqref{6.18q}. Furthermore, we can approximate  $ P_{\rm ac}(\hat{\mathcal A})g $ by
finite sums, $\sum_{n=1}^N    E_{ \hat{\mathcal A}}({\Delta}_n) g, $ where ${\Delta}_n$  are  bounded  closed  intervals  that satisfy \eqref{5.100}-\eqref{5.102} 
with $ \Delta$  replaced by ${\Delta}_n$ and $J_-, J_+$
replaced, respectively by $ J_{-,n}$ and $J_{+,n},$ and moreover, with  $\Delta_j\cap \Delta_n=\emptyset, j \neq n,$ and   ${\Delta}_n \cap \mathcal M=\emptyset.$  
 In the  same way, we can replace $ f$ in \eqref{wa.2b} by $\sum_{n=1}^N E_{ \hat{\mathcal A}_0}({\Delta}_n) f, $ with 
 $ f \in \mathcal H_{\gamma,\xi}.$ 
Then, in order that the weak Abelian wave operators exists it is enough to prove that
\beq\label{wa.2c}
\left(\Omega_\pm E_{\hat{\mathcal A}_0}({\Delta}_j) f, E_{\hat{\mathcal A}}({\Delta}_n) g \right)_{\hat{\mathcal H}}:=  \lim_{\varepsilon \downarrow 0} \int_0^\infty w_\varepsilon(t) 
\left(e^{\pm it \hat{\mathcal A}} e^{\mp it\hat{\mathcal A}_0}   E_{\hat{\mathcal A}_0}({\Delta}_j) f, E_{\hat{\mathcal A}}({\Delta}_n) g \right)_{\hat{\mathcal H}}\, dt.
\ene
Further, as in the proof of Lemma 1 in page 92 of \cite{ya} we prove, taking the Fourier  transform that \eqref{wa.2c} is equivalent to
\beq\label{wa.3}\begin{array}{r}
\left( \Omega_\pm  E_{\hat{\mathcal A}_0}({\Delta}_j) f,  E_{\hat{\mathcal A}}(
{\Delta}_n) g \right)_{\hat{\mathcal H}}= \lim_{\varepsilon \downarrow  0}  \ds \frac{\varepsilon}{\pi} \int_{\mathbb R}\,\left( R_{\hat{\mathcal A}_0}(\beta\pm i\varepsilon) E_{\hat{\mathcal A}_0}({\Delta}_j) f,\right. \\ \left. R_{\hat{\mathcal A}}(\beta\pm i\varepsilon)  E_{\hat{\mathcal A}}({\Delta}_n) g \right)_{\hat{\mathcal H}}\, d\beta.
\end{array}
\ene
Then, to prove \eqref{wa.2b} and \eqref{wa.3a} it is enough to prove that \eqref{wa.3} holds and that,
\beq\label{wa.3b}
\left( \Omega_\pm  E_{\hat{\mathcal A}_0}({\Delta}_j) f,  E_{\hat{\mathcal A}}(
{\Delta}_n) g \right)_{\hat{\mathcal H}}= \left( \mathcal F  E_{\hat{\mathcal A}_0}({\Delta}_j) f, \mathcal F_\pm  E_{\hat{\mathcal A}}(
{\Delta}_n) g\right)_{\tilde{\mathcal H}_0}.
\ene
 We proceed to prove \eqref{wa.3} and \eqref{wa.3b}. We first prove that when $j\neq n$ the limit in the right-hand side of \eqref{wa.3} is zero. Remark that as ${\Delta}_j \cap {\Delta}_n=\emptyset,$ by \eqref{5.93} and \eqref{s.75d} also the right-hand side of \eqref{wa.3b} is zero. Take $R$ such that ${\Delta}_j\cup{\Delta}_n \subset (-R/2, R/2).$ Then, 
  \beq\label{wa.3bxx}\begin{array}{c}
 \ds\frac{\varepsilon}{\pi} \int_{|\beta| \geq R  } \left| \left( R_{\hat{A}_0}(\beta\pm i\varepsilon) E_{\hat{\mathcal A}}({\Delta}_j) f,\right. \right.  \left. \left. R_{\hat{\mathcal A}}(\beta\pm i\varepsilon)  E_{\hat{\mathcal A}}({\Delta}_n) g \right)_{\hat{\mathcal H}}\right |\, d\beta \leq \\ 
C \int_{|\beta| \geq R}  \frac{1}{|\beta|^2} d\beta \ds \leq \frac{C}{R}.
\end{array}
\ene
Let $ \hat{\Delta}_n, n=1,\dots,N$ be closed bounded intervals that satisfy the same conditions as $\Delta_n$ 
and that contain $ \Delta_n$ on its interior. 
 By the spectral theorem (see Section five of Chapter six of \cite{kato} and Theorem VIII.6 in page 263 of \cite{rs1})
$$
\|R_{\hat{\mathcal A}}(\beta\pm i\varepsilon)  E_{\hat{\mathcal A}}({\Delta}_n) \| \leq C, \qquad \beta \in  \mathbb R \setminus \hat{\Delta}
_n.
$$
Then, by Schwarz's inequality and the spectral theorem,
\beq\label{wa.3ax}\begin{array}{c}
 \frac{\varepsilon}{\pi} \int_{[\mathbb R \setminus\hat{ \Delta}_n]\cap [-R,R]}  \left|\left( R_{\hat{A}_0}(\beta\pm i\varepsilon) E_{\hat{\mathcal A}_0}({\Delta}_j) f,\right. \right.  \left. \left. R_{\hat{\mathcal A}}(\beta\pm i\varepsilon)  E_{\hat{\mathcal A}}({\Delta}_n) g \right)_{\hat{\mathcal H}}\right |\, d\beta \leq \\
C   \int_{[\mathbb R \setminus \hat{\Delta}_n]\cap [-R,R]}  \left[ \int_{{\Delta}_j} \frac{\varepsilon^2}{(\beta-\lambda)^2+\varepsilon^2} \frac{d}{d \lambda}
(E_{\hat{\mathcal A}_0}(\lambda)f,f)_{\hat{\mathcal H}} \, d\lambda \right]^{1/2}\, d\beta . 
\end{array}
\ene
Hence, by Lebesgue dominated convergence theorem,
\beq\label{wa.3c}
\ds \lim_{\varepsilon \downarrow  0}   \frac{\varepsilon}{\pi} \int_{[\mathbb R \setminus \hat{\Delta}_n]\cap [-R,R]} \left|\left( R_{\hat{\mathcal A}_0}(\beta\pm i\varepsilon) E_{\hat{\mathcal A}_0}(\Delta_j)) f,\right. \right.\\ \left. \left. R_{\hat{\mathcal A}}(\beta\pm i\varepsilon)  E_{\hat{\mathcal A}}({\Delta}_n) g \right)_{\hat{\mathcal H}}\right |\, d\beta=0.
\ene
By \eqref{wa.3bxx} and \eqref{wa.3c},
\beq\label{wa.3cxx}
\ds \lim_{\varepsilon \downarrow  0}   \frac{\varepsilon}{\pi} \int_{\mathbb R \setminus \hat{\Delta}_n} \left |\left( R_{\hat{A}_0}(\beta\pm i\varepsilon) E_{\hat{\mathcal A}}({\Delta}_j) f,\right.\right. \\ \left. \left.R_{\hat{\mathcal A}}(\beta\pm i\varepsilon)  E_{\hat{\mathcal A}}({\Delta}_n) g \right)_{\hat{\mathcal H}}\right |\, d\beta=0.
\ene
We similarly prove,
\beq\label{wa.3ccee}\begin{array}{l}
\ds \lim_{\varepsilon \downarrow  0}   \frac{\varepsilon}{\pi} \int_{\mathbb R \setminus \hat{\Delta}_j} \left |\left( R_{\hat{A}_0}
(\beta\pm i\varepsilon) E_{\hat{\mathcal A}}({\Delta}_j) f,
  R_{\hat{\mathcal A}}(\beta\pm i\varepsilon)  E_{\hat{\mathcal A}}({\Delta}_n) g \right)_{\hat{\mathcal H}}\right |\, d\beta=0.
\end{array}
\ene
By \eqref{wa.3cxx} and \eqref{wa.3ccee} the limit in the right-hand side  of \eqref{wa.3} is zero for $j \neq n.$
We consider now the case $j=n.$ 
 
Since $\mathcal M$ has measure zero,
\beq\label{wa.3cd}\begin{array}{l}
   \frac{\varepsilon}{\pi} \int_{ \mathcal M} \left( R_{\hat{\mathcal A}_0}(\beta\pm i\varepsilon) E_{\hat{\mathcal A}_0}({\Delta}_j) f,\right.  \left. R_{\hat{\mathcal A}}(\beta\pm i\varepsilon)  E_{\hat{\mathcal A}}({\Delta}_j) g \right)_{\hat{\mathcal H}}\, d\beta=0.
\end{array}
\ene
Moreover, by  \eqref{5.103} and \eqref{s.66}
\beq\label{wa.3d}\begin{array}{l}
\ds \frac{\varepsilon}{\pi} \int_{ \mathbb R \setminus \mathcal M}\,d\beta\,\left( R_{\hat{\mathcal A}_0}(\beta\pm i\varepsilon) E_{\hat{\mathcal A}_0}({\Delta}_j) f, R_{\hat{\mathcal A}}(\beta\pm i\varepsilon)  E_{\hat{\mathcal A}}({\Delta}_j) g \right)_{\hat{\mathcal H}}= \int_{  \mathbb R \setminus \mathcal M} \, d\beta   \int_{{\Delta}_j} d\upsilon \\[10pt]
\ds \frac{1}{\pi} \frac{\varepsilon} {(\beta-\upsilon)^2+\varepsilon ^2}   \left(\mathcal L_{j_-,j_+}(\upsilon) \mathbf F E_{\hat{\mathcal A}_0}({\Delta}_j) f,   \mathcal L_{j_-,j_+}(\upsilon) \mathbf F  \left(I+M(\beta\pm i\varepsilon) \right)^{-1}    E_{\hat{\mathcal A}}({\Delta}_j) g\right )_{Q_{j_-,j_+}}.         
 \end{array}
 \ene
Recall that $Q_{j_-,j_+}$ is defined in \eqref{s.83ca} and \eqref{s.83cb}. Moreover, by  \eqref{wa.3d},  Theorem~\ref{invert}  and Theorems 7 and 8 in Subsection 3 of Section 2 of Chapter 1 of \cite{ya}
\beq\label{wa.3e}\begin{array}{l}
\ds\lim_{\varepsilon \downarrow  0}   \frac{\varepsilon}{\pi} \int_{ \mathbb R \setminus \mathcal M}\,\left( R_{\hat{\mathcal  A}_0}(\beta\pm i\varepsilon) E_{\hat{\mathcal A}_0}({\Delta}_j) f,\right.  \left. R_{\hat{\mathcal A}}(\beta\pm i\varepsilon)  E_{\hat{\mathcal A}}({\Delta}_j) g \right)_{\hat{\mathcal H}}\, d\beta=    \int_{ {\Delta}_j} \,d\beta \\
 \left(  \mathcal L_{j_-,j_+}(\beta) \mathbf F E_{\hat{\mathcal A}_0}({\Delta}_j) f , \right.  \left.\mathcal L_{j_-,j_+}(\beta) \mathbf F  \left(I+ M(\beta\pm i 0) \right)^{-1}    E_{\hat{\mathcal A}}({\Delta}_j) g \right)_{Q_{j_-,j_+}}.        
 \end{array}
 \ene
 Further, by \eqref{5.103}, \eqref{s.75c}-\eqref{s.75e}, and \eqref{wa.3e}
 \beq\label{wa.3f}\begin{array}{l}
\ds \lim_{\varepsilon \downarrow  0}   \frac{\varepsilon}{\pi} \int_{\mathbb R \setminus \mathcal M }\,\left( R_{\hat{\mathcal A}_0}( \beta\pm i\varepsilon) E_{\hat{\mathcal A}_0}({\Delta}_j) f,\right.  \left. R_{\hat{\mathcal A}}(\beta\pm i\varepsilon)  E_{\hat{\mathcal A}}({\Delta}_j) g \right)_{\hat{\mathcal H}}\, d\beta=\\
\left( \mathcal F_0  E_{\hat{\mathcal A}_0}({\Delta}_j)f , \mathcal F_\pm E_{\hat{\mathcal A}}({\Delta}_j  ) g\right)_{\tilde{\mathcal H}_{0}}.
\end{array}
\ene
By \eqref{wa.3cd},  \eqref{wa.3f}, and as the limit in the right-hand side of \eqref{wa.3} is zero for $ j\neq n,$  equations \eqref{wa.3} and \eqref{wa.3b} hold. This completes the proof of the theorem.
\end{proof}
In the following theorem we prove our results on the existence and the completeness of  the wave operators.
\begin{theorem}\label{waop}
 Suppose that Assumption~\ref{assum0} is satisfied, that    $h'_\pm(v^2/2) v^2 $  is  a integrable function of $v\in \mathbb R,$ and that $h_\pm(\lambda)$ have a continuous second derivative that fulfills  \eqref{6.18qww}.
  Then, the wave operators,
\beq\label{wa.6}
W_\pm= {\rm s-} \lim_{t\to \pm \infty} e^{it \hat{\mathcal A}} e^{-it\hat{\mathcal A}_0}  P_{\rm ac}(\hat{\mathcal A}_0),
\ene
where $ {\rm s-} \lim_{t\to \pm \infty} $ means the strong limit as $t\to \pm \infty,$  exist and are complete. They are partially isometric with initial subspace $ \hat{\mathcal H}_{\rm ac}(\hat{\mathcal A}_0)$ and final subspace $\hat{\mathcal H}_{\rm ac}(\hat{\mathcal A}).$ Moreover, the intertwining relations hold,
\beq\label{wa.7}
\phi(\hat{\mathcal A}_{\rm ac}) W_\pm= W_\pm \phi(\hat{\mathcal A}_0),
\ene
for every Borel function $\phi.$ Furthermore, the stationary formulae for the wave operators are valid,
\beq\label{wa.8}
W_\pm= \mathcal F^\ast_\pm \mathcal F.
\ene
\end{theorem}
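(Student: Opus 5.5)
The plan is to pass from the weak Abelian wave operators of Theorem~\ref{pwa.1} to strong limits by the standard abstract argument (as in Section 3 of Chapter 2 of \cite{ya}). The key fact is that a weak Abelian limit which is isometric on $\hat{\mathcal H}_{\rm ac}(\hat{\mathcal A}_0)$ forces strong convergence of $e^{it\hat{\mathcal A}}e^{-it\hat{\mathcal A}_0}P_{\rm ac}(\hat{\mathcal A}_0)$.

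First I would establish the intertwining relation for $\Omega_\pm$ directly from the stationary formula \eqref{wa.3a}. By Theorem~\ref{theogf}, $\phi(\hat{\mathcal A}_{\rm ac})\mathcal F_\pm^\ast=\mathcal F_\pm^\ast\phi(\mu)$. By \eqref{5.90}, \eqref{5.91} and Proposition~\ref{propgf}, $\mathcal F_0\phi(\hat{\mathcal A}_0)=\phi(\mu)\mathcal F_0$ on the absolutely continuous subspace. Together these give $\phi(\hat{\mathcal A})\Omega_\pm=\Omega_\pm\phi(\hat{\mathcal A}_0)$, in particular $e^{is\hat{\mathcal A}}\Omega_\pm=\Omega_\pm e^{is\hat{\mathcal A}_0}$. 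Note also that, up to the sign conventions in \eqref{wa.2}, $\Omega_\pm$ is the weak Abelian limit associated with $t\to\pm\infty$ of $W(t):=e^{it\hat{\mathcal A}}e^{-it\hat{\mathcal A}_0}P_{\rm ac}(\hat{\mathcal A}_0)$.

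The main step is the upgrade to a strong limit. Take $f\in\hat{\mathcal H}_{\rm ac}(\hat{\mathcal A}_0)$ and compute
\[
\|W(t)f-\Omega_\pm f\|^2=2\|f\|^2-2\,{\rm Re}\,(W(t)f,\Omega_\pm f),
\]
using that $\|\Omega_\pm f\|=\|f\|$, which follows from the partial isometry in Theorem~\ref{pwa.1}. By the intertwining, $(W(t)f,\Omega_\pm f)=(f,e^{it\hat{\mathcal A}_0}e^{-it\hat{\mathcal A}}\Omega_\pm f)$, which equals $(f,\Omega_\pm^\ast\Omega_\pm f)$ after a rewriting. That does not directly close, so instead I would use the Abelian averaging. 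The function $t\mapsto\|W(t)f-\Omega_\pm f\|^2$ has Abelian mean $2\|f\|^2-2{\rm Re}(\Omega_\pm f,\Omega_\pm f)=0$. This step expands $(W(t)f,\Omega_\pm f)$ and averages it with $w_\varepsilon$, using the weak convergence in \eqref{wa.2b}. The Abelian mean vanishing alone gives only convergence in Abelian mean, which I expect to be the obstacle. I would handle it as in \cite{ya} by first proving existence of the wave operators for $\hat{\mathcal A}_0$-smooth–type pieces. Concretely, I would decompose $f=\sum E_{\hat{\mathcal A}_0}(\Delta_n)f$ as in the proof of Theorem~\ref{pwa.1} and show that $\frac{d}{dt}W(t)f=ie^{it\hat{\mathcal A}}\hat{\mathcal V}e^{-it\hat{\mathcal A}_0}f$ is such that $(W(t)f,g)$ has a limit. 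Combining the existence of the limit with the vanishing Abelian mean gives strong convergence. A cleaner route is the known theorem (Theorem 2.3.? / Lemma on p.~76 of \cite{ya}): if the weak Abelian wave operator exists and is isometric on $\hat{\mathcal H}_{\rm ac}(\hat{\mathcal A}_0)$, and the same holds for the reversed pair, then the strong wave operators exist and coincide. I will invoke this.

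For the reversed pair $(\hat{\mathcal A}_0,\hat{\mathcal A})$ I would note that its weak Abelian operator is $\Omega_\pm^\ast=\mathcal F_0^\ast\mathcal F_\pm$, which by Theorem~\ref{theogf} is isometric on $\hat{\mathcal H}_{\rm ac}(\hat{\mathcal A})$ since $\mathcal F_\pm$ maps it onto $\tilde{\mathcal H}_0$. Hence both $W_\pm$ and the inverse wave operators exist strongly, $W_\pm=\Omega_\pm=\mathcal F_\pm^\ast\mathcal F_0$, which gives \eqref{wa.8} with $\mathcal F$ restricted to the absolutely continuous part. The range of $W_\pm$ is $\hat{\mathcal H}_{\rm ac}(\hat{\mathcal A})$ by Theorem~\ref{pwa.1}, which is the completeness. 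The intertwining \eqref{wa.7} was obtained above.
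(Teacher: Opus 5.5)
Your route is the paper's. You start from Theorem~\ref{pwa.1}, which gives weak Abelian wave operators $\Omega_\pm=\mathcal F_\pm^\ast\mathcal F_0$, partially isometric from $\hat{\mathcal H}_{\rm ac}(\hat{\mathcal A}_0)$ onto $\hat{\mathcal H}_{\rm ac}(\hat{\mathcal A})$. You then pass to strong limits by an abstract result from \cite{ya}, and read off completeness and \eqref{wa.8}. Deriving \eqref{wa.7} from \eqref{wa.3a}, \eqref{s.81} and $\mathcal F_\pm\mathcal F_\pm^\ast=I$ is a valid alternative to the paper's appeal to Theorem~4, p.~69 of \cite{ya}. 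The reversed-pair Abelian limit is indeed $\Omega_\pm^\ast$.

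The gap is the upgrade step you yourself flag as the obstacle. The ``known theorem'' you invoke, that isometry of the weak Abelian limits for both pairs yields the strong wave operators, is false. Isometry of $\Omega_\pm$ only gives $\int_0^\infty w_\varepsilon(t)\|e^{\pm it\hat{\mathcal A}}e^{\mp it\hat{\mathcal A}_0}f-\Omega_\pm f\|^2dt\to0$, i.e. convergence in Abelian mean, and no abstract argument improves this. Here is a counterexample:
\begin{itemize}
\item Take $\hat{\mathcal A}_0=-i\,d/ds$ on $L^2(\mathbb R)$ and $\hat{\mathcal A}=\hat{\mathcal A}_0+Q'(s)$.
\item Let $Q$ be smooth with $Q'$ bounded, $Q=\pi$ on $[a_n,a_n+1]$ with $a_n=2^{2^n}$, $n\geq 2$, and $Q=0$ away from these bumps.
\item Then $e^{it\hat{\mathcal A}}e^{-it\hat{\mathcal A}_0}$ is multiplication by $e^{i(Q(s+t)-Q(s))}$.
\item On $[0,1]$ this multiplier equals $-1$ at $t=a_n$ and equals $1$ for $t\in[a_n+3,a_{n+1}-3]$, so there is not even a weak limit as $t\to+\infty$.
\item Yet both Abelian limits exist and are unitary: they are multiplication by $e^{\mp iQ}$.
\end{itemize}
A side error: by intertwining, $(W(t)f,\Omega_\pm f)=(e^{-it\hat{\mathcal A}_0}f,\Omega_\pm e^{-it\hat{\mathcal A}_0}f)$, not $(f,\Omega_\pm^\ast\Omega_\pm f)$. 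The latter would force $W(t)f=\Omega_\pm f$ for every $t$.

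What is missing is genuine, non-Abelian convergence on a dense set. That was your first idea, but you did not carry it out. Suppose $\lim_{t\to\pm\infty}(W(t)f,g)$ exists for $f$ in a dense subset of $\hat{\mathcal H}_{\rm ac}(\hat{\mathcal A}_0)$ and all $g$. Then it exists for all $f$ by uniform boundedness, and it must coincide with the Abelian limit $\Omega_\pm f$. Since $\Omega_\pm$ is isometric, your norm identity then gives strong convergence. Completeness and \eqref{wa.8} follow from Theorem~\ref{pwa.1} exactly as you say.

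Producing that limit needs an actual time-decay estimate. One option is a Cook-type bound $\int_1^\infty\|\hat{\mathcal V}e^{\mp it\hat{\mathcal A}_0}f\|_{\hat{\mathcal H}}\,dt<\infty$ on a dense set. For $f\in\hat{\mathcal H}_{\rm ac}(\hat{\mathcal A}_0)$ the $F$-component vanishes, so only $\hat{\mathcal V}_2$ enters. The bound then amounts to a phase-mixing decay estimate for the free flow in energy-angle variables, which has to be proved. Another option is a local Kato-smoothness argument.

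The paper makes the same passage from Theorem~\ref{pwa.1} to $W_\pm$ by citing Corollary~2, p.~74 and the remarks on p.~76 of \cite{ya}, so you are following its line. But the example shows this step cannot rest on isometry and completeness of the Abelian limits alone. As written, your proposal leaves it unproved.
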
 
\begin{proof}
By Theorem~\ref{pwa.1} the $\Omega_\pm$  are partially isometric with initial subspace $ \hat{\mathcal H}_{\rm ac}(\hat{\mathcal A}_0)$ and final subspace $\hat{\mathcal H}_{\rm ac}(\hat{\mathcal A}).$
Then, by Corollary 2 in page 74 and the remarks in page 76 of \cite{ya}, the wave operators $W_\pm $   exist,  are complete, and coincide with the $\Omega_\pm.$ Furthermore, by \eqref{wa.3a} the stationary formulae \eqref{wa.8} hold. Finally, by Theorem 4 in page 69 of \cite{ya} the intertwining relations \eqref{wa.7} are valid for any bounded Borel function $\phi.$ In the general case where $\phi $ is not bounded,  \eqref{wa.7} follows from \eqref{wa.7} in the bounded case, approximating $\phi$ by 
$ \phi_n$ where $ \phi_n(\lambda)= \phi(\lambda)$ if $|\phi(\lambda)| \leq n$ and $ \phi_n(\lambda)=0,$ if 
$|\phi(\lambda)|  >n,$ for $n=1,\dots.$
\end{proof}

We proceed to prove Birman's invariance principle of the wave operators. For this purpose, we make explicit the dependence of the wave operators on $\hat{\mathcal A}$ and $\hat{\mathcal A}_0,$   and denote by $W_\pm(\hat{\mathcal A}, \hat{\mathcal A}_0)$ the wave operator for the pair $\hat{\mathcal A}, \hat{\mathcal A}_0.$ 
Let $\phi$ be a real valued Borel function defined on $\mathbb R=\sigma(\hat{\mathcal A}).$ We decompose $\mathbb R$ as follows
\beq  \label{wa.13}
\mathbb R= \cup_{n=1}^N \Lambda_n,
\ene
where the $\Lambda_n,$ for $n=1,\dots, N$ are  nonintersecting intervals, and $N$ is a positive integer, or $N=\infty.$  Suppose that $\phi$ is absolutely continuous in each of the $\Lambda_n, n=1,\dots,$ and that $\phi'(\beta) >0,$ for almost every $\beta$ in $\Lambda_n.$
Then, by Lemma 1 in page 87 of \cite{ya}
\beq\label{wa.14}
\hat{\mathcal H}_{\rm ac}(\phi(\hat{\mathcal A}))= \hat{\mathcal H}_{\rm ac}(\hat{\mathcal A}),
\ene
and
\beq\label{wa.15}
\hat{\mathcal H}_{\rm ac}(\phi(\hat{\mathcal A}_0))= \hat{\mathcal H}_{\rm ac}(\hat{\mathcal A}_0).
\ene
  Birman's invariance principle is the following theorem.  
  \begin{theorem} \label{bir}  Suppose that Assumption~\ref{assum0} is satisfied, that    $h'_\pm(v^2/2) v^2 $  is  a integrable function of $v\in \mathbb R,$ and that $h_\pm(\lambda)$ have a continuous second derivative that fulfills  \eqref{6.18qww}.
  Assume that $\phi$ is a real valued Borel function defined in $\mathbb R,$ that is absolutely continuous on each of the $\Lambda_n$ in \eqref{wa.13} and that $\phi'(\beta) >0,$ for almost every $\beta \in \Lambda_n,$ for $n=1,\dots, N.$ Then, the wave operators $W_\pm(\phi(\hat{\mathcal A}), \phi(\hat{\mathcal A}_0))$ exist and,
  \beq\label{wa.16}
  W_\pm(\phi(\hat{\mathcal A}), \phi(\hat{\mathcal A}_0))= W_\pm(\hat{\mathcal A}, \hat{\mathcal A}_0).
  \ene
  In particular, the wave operators $W_\pm(\phi(\hat{\mathcal A}), \phi(\hat{\mathcal A}_0))$ are complete. They are 
  partially isometric with initial subspace $ \hat{\mathcal H}_{\rm ac}(\hat{\mathcal A}_0)$ and final subspace $\hat{\mathcal H}_{\rm ac}(\hat{\mathcal A}).$
  \end{theorem}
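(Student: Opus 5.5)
The proof will rest on the stationary formulae of Theorem~\ref{waop} and of Theorem~\ref{pwa.1}, together with the spectral representations given by the generalized Fourier maps in Theorem~\ref{theogf}. The idea is the standard proof of Birman's invariance principle in the stationary framework (as in Section 2.? of \cite{ya}, e.g. Theorem 6 of Chapter 2). If the wave operators of a pair are complete and given by $W_\pm=\mathcal F_\pm^\ast\mathcal F_0$, where $\mathcal F_0$ and $\mathcal F_\pm$ diagonalize $\hat{\mathcal A}_0$ and $\hat{\mathcal A}_{\rm ac}$ as multiplication by the same variable $\beta$, then it suffices to prove
\[
\lim_{t\to\pm\infty}\bigl\|e^{it\phi(\hat{\mathcal A})}e^{-it\phi(\hat{\mathcal A}_0)}f-W_\pm f\bigr\|_{\hat{\mathcal H}}=0
\]
for $f$ in a dense subset of $\hat{\mathcal H}_{\rm ac}(\hat{\mathcal A}_0)$.

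\textbf{Step 1.} By the intertwining relation \eqref{wa.7} with the Borel function $e^{it\phi}$, we have $e^{it\phi(\hat{\mathcal A})}W_\pm=W_\pm e^{it\phi(\hat{\mathcal A}_0)}$ on $\hat{\mathcal H}_{\rm ac}(\hat{\mathcal A}_0)$. Since $W_\pm$ is an isometry there and $e^{it\phi(\hat{\mathcal A})}$ is unitary, the claim reduces to
\[
\bigl\|e^{-it\phi(\hat{\mathcal A}_0)}f-W_\pm^\ast e^{-it\phi(\hat{\mathcal A})}W_\pm\ldots\bigr\|,
\]
or, more cleanly, to
\[
\lim_{t\to\pm\infty}\bigl\|(W_\pm-I)e^{-it\phi(\hat{\mathcal A}_0)}f\bigr\|_{\hat{\mathcal H}}=0 .
\]
The identity $e^{it\phi(\hat{\mathcal A})}e^{-it\phi(\hat{\mathcal A}_0)}f-W_\pm f=e^{it\phi(\hat{\mathcal A})}(I-W_\pm)e^{-it\phi(\hat{\mathcal A}_0)}f$ gives this reduction directly.

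\textbf{Step 2.} I would prove the following: for every $g\in\hat{\mathcal H}_{\rm ac}(\hat{\mathcal A}_0)$, $\|(I-W_\pm)e^{-is\hat{\mathcal A}_0}g\|\to0$ as $s\to\pm\infty$. This follows from the existence of $W_\pm$, because $W_\pm=\mathrm{s}\text{-}\lim e^{is\hat{\mathcal A}}e^{-is\hat{\mathcal A}_0}$ and
\[
\|(W_\pm-I)e^{-is\hat{\mathcal A}_0}g\|=\|e^{-is\hat{\mathcal A}}W_\pm g-e^{-is\hat{\mathcal A}_0}g\|=\|W_\pm g-e^{is\hat{\mathcal A}}e^{-is\hat{\mathcal A}_0}g\|\to0.
\]
Then I would transfer this statement from the group $e^{-is\hat{\mathcal A}_0}$ to $e^{-it\phi(\hat{\mathcal A}_0)}$ using the classical lemma in \cite{ya}: if $\phi'>0$ a.e. on each $\Lambda_n$, then for $f$ with $\mathcal F_0 f$ supported in one $\Lambda_n$ and $\phi$ regular there, the vector $e^{-it\phi(\hat{\mathcal A}_0)}f$ becomes asymptotically orthogonal to any fixed vector, and is in fact well approximated by superpositions of $e^{-is\hat{\mathcal A}_0}g$ with $s\to\pm\infty$. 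Concretely, a stationary phase / Riemann--Lebesgue argument in the spectral variable $\beta$ applies, since in the representation $\mathcal F_0$ both groups act as multiplication by $e^{-it\phi(\beta)}$, respectively $e^{-is\beta}$. Here I would invoke directly the abstract invariance theorem in \cite{ya} (Chapter 2, Section 7), whose hypotheses are the existence and completeness of $W_\pm(\hat{\mathcal A},\hat{\mathcal A}_0)$ together with the conditions on $\phi$. Both are established in Theorem~\ref{waop} and in \eqref{wa.14}, \eqref{wa.15}.

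\textbf{Step 3.} Having identified $W_\pm(\phi(\hat{\mathcal A}),\phi(\hat{\mathcal A}_0))=W_\pm(\hat{\mathcal A},\hat{\mathcal A}_0)$, completeness and the partial isometry property follow immediately from Theorem~\ref{waop} together with \eqref{wa.14} and \eqref{wa.15}. The sign convention needs care: when $\phi'>0$ the limits $t\to\pm\infty$ correspond to $s\to\pm\infty$, with no interchange.

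The main obstacle is Step 2. The abstract theorems require either trace-class or smooth-perturbation hypotheses, whereas here we only have the stationary representation. To avoid redoing the proof, I would instead use the stationary form \eqref{wa.8}. In the representation $\mathcal F_0$, the operator $e^{it\phi(\hat{\mathcal A})}e^{-it\phi(\hat{\mathcal A}_0)}$ applied to $f=E_{\hat{\mathcal A}_0}(\Delta)f$, with $\Delta$ a compact subinterval of some $\Lambda_n$ avoiding $\mathcal M$ and with $f\in\mathcal H_{\gamma,\xi}$, can be handled by repeating the Abelian limit computation of Theorem~\ref{pwa.1} with $\hat{\mathcal A}$ and $\hat{\mathcal A}_0$ replaced by $\phi(\hat{\mathcal A})$ and $\phi(\hat{\mathcal A}_0)$. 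Changing variables $\beta\mapsto\phi(\beta)$, which is legitimate because $\phi$ is absolutely continuous and strictly increasing on $\Delta$, shows that the weak Abelian limits equal $\mathcal F_\pm^\ast\mathcal F_0$. By Corollary 2, page 74 of \cite{ya}, these limits are then the strong wave operators.
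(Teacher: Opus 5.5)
\textbf{There is a genuine gap: the proposal never proves that any limit of $e^{it\phi(\hat{\mathcal A})}e^{-it\phi(\hat{\mathcal A}_0)}$ exists.} Your reduction in Step~1 is correct, but this existence is the one nontrivial point.

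\emph{Step~2.} The transfer from $e^{-is\hat{\mathcal A}_0}$ to $e^{-it\phi(\hat{\mathcal A}_0)}$ does not follow from $\|(I-W_\pm)e^{-is\hat{\mathcal A}_0}g\|\to 0$. Write $e^{-it\phi(\beta)}\psi(\beta)=\int\hat\chi_t(s)e^{-is\beta}\,ds$. Stationary phase gives $\|\hat\chi_t\|_{L^1}\sim t^{1/2}$ wherever $\phi''\neq 0$, so the superposition is not uniformly controlled. This route therefore cannot produce strong limits for $(\phi(\hat{\mathcal A}),\phi(\hat{\mathcal A}_0))$ from the mere existence and completeness of $W_\pm(\hat{\mathcal A},\hat{\mathcal A}_0)$. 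Your appeal to an abstract theorem with exactly those hypotheses is unsupported, and you yourself note that the standard ones need trace-class or smoothness assumptions.

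\emph{The stationary fallback.} It does not work as described either.
\begin{itemize}
\item The computation in Theorem~\ref{pwa.1} rests on $R_{\hat{\mathcal A}}(z)=R_{\hat{\mathcal A}_0}(z)(I+M(z))^{-1}$ and on the boundary values $M(\beta\pm i0)$ from Theorem~\ref{theolim}. Nothing of this kind is available for $R_{\phi(\hat{\mathcal A})}(z)$.
\item If you go through the spectral representations instead, the Abelian mean becomes that of $e^{it\phi(\beta)}\,\mathcal F_\pm\mathcal F_0^\ast\, e^{-it\phi(\beta)}$, and $\mathcal F_\pm\mathcal F_0^\ast$ is not diagonal. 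The substitution $\beta\mapsto\phi(\beta)$ straightens the phase, but it conjugates this operator into one about which Theorem~\ref{pwa.1} says nothing.
\item You would have to show directly that the off-diagonal, Cauchy-type part of $\mathcal F_\pm\mathcal F_0^\ast$ (coming from $(I+M(\beta\pm i0))^{-1}$) averages to zero against $2i\varepsilon/(\phi(\nu)-\phi(\beta)+2i\varepsilon)$. For a merely absolutely continuous $\phi$ this is not covered by the H\"older/Privalov estimates.
\item You would also have to control cross terms between different $\Lambda_n$ whose images under $\phi$ overlap.
\end{itemize}
None of this is in the proposal.

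\textbf{The missing idea is the one the paper uses:} the invariance principle holds abstractly for \emph{Abelian} wave operators (Theorem~1 on page~110 of \cite{ya}). At the Abelian level, the trace-class or smoothness hypotheses you worried about are not needed. The paper's chain is:
\begin{itemize}
\item Since $W_\pm(\hat{\mathcal A},\hat{\mathcal A}_0)$ exist, the Abelian wave operators \eqref{wa.17} exist and equal them.
\item That theorem gives $\Theta_\pm(\phi(\hat{\mathcal A}),\phi(\hat{\mathcal A}_0))=W_\pm(\hat{\mathcal A},\hat{\mathcal A}_0)$.
\item By \eqref{wa.14}, \eqref{wa.15}, these are partial isometries from $\hat{\mathcal H}_{\rm ac}(\phi(\hat{\mathcal A}_0))$ onto $\hat{\mathcal H}_{\rm ac}(\phi(\hat{\mathcal A}))$.
\item The paper then passes to $W_\pm(\phi(\hat{\mathcal A}),\phi(\hat{\mathcal A}_0))$ via the remarks on page~76 of \cite{ya}.
\end{itemize}
Your closing appeal to Corollary~2 on page~74 is the same last step as the paper's.

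\textbf{Caution about that shared last step.} It is not valid in that generality. Take $H_0=-i\,d/dx$ on $L^2(\mathbb R)$ and $H=UH_0U$, with $U$ multiplication by $u=1-2\chi_B$ and $B=\bigcup_{n\geq 1}[n^2,n^2+1]$. Then $e^{itH}e^{-itH_0}f=u(x)u(x+t)f(x)$. Since $B$ has zero density, its Abelian limit as $t\to+\infty$ is the unitary $U$. Yet no strong limit exists; for example, take $f=\chi_{[0,1]}$. So in either version, the passage from Abelian limits that are partial isometries onto $\hat{\mathcal H}_{\rm ac}$ to strong time limits needs an additional argument beyond the partial-isometry property.
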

  \begin{proof} The Abelian wave operators, $\Theta_\pm(\hat{\mathcal A}, \hat{\mathcal A}_0)$ are defined by the following limit in page 76 of \cite{ya},
  \beq\label{wa.17}
  \lim_{\varepsilon \downarrow 0} \int_0^\infty\varepsilon e^{-\varepsilon t} \| e^{i\pm t\hat{\mathcal A}}  e^{\mp it \hat{\mathcal A}_0 t} P_{\rm ac}(\hat{\mathcal A}_0)   f- \Theta_\pm(\hat{\mathcal A},\hat{\mathcal A}_0) f\|^2_{\hat{\mathcal H}} dt, \qquad f \in \hat{\mathcal H}.
  \ene
   By the remarks in page 76 of \cite{ya}, as by Theorem~\ref{waop} the wave operators $W_\pm(\hat{\mathcal A}, \hat{\mathcal A}_0)$ exist, the Abelian wave operators exist and,
  \beq\label{wa.18}
  \Theta_\pm(\hat{\mathcal A},\hat{\mathcal A}_0)= W_\pm(\hat{\mathcal A}, \hat{\mathcal A}_0).
  \ene
  Then, by Theorem 1 in page 110 of \cite{ya} the Abelian wave operators $\Theta_\pm(\phi(\hat{\mathcal A}),\phi(\hat{\mathcal A}_0))$ exist and, furthermore,
  \beq\label{wa.19}
  \Theta_\pm(\phi(\hat{\mathcal A}),\phi(\hat{\mathcal A}_0))=   \Theta_\pm(\hat{\mathcal A},\hat{\mathcal A}_0)= W_\pm(\hat{\mathcal A}, \hat{\mathcal A}_0).
  \ene
  Further,  by Theorem~\ref{waop} the wave operators $W_\pm(\hat{\mathcal A}, \hat{\mathcal A}_0)$ are   partially isometric with initial subspace $ \hat{\mathcal H}_{\rm ac}(\hat{\mathcal A}_0)$ and final subspace $\hat{\mathcal H}_{\rm ac}(\hat{\mathcal A}).$
 Then, it follows from \eqref{wa.19} that the wave operators $  \Theta_\pm(\phi(\hat{\mathcal A}),\phi(\hat{\mathcal A}_0))$ are   partially isometric with initial subspace $ \hat{\mathcal H}_{\rm ac}(\hat{\mathcal A}_0)$ and final subspace $\hat{\mathcal H}_{\rm ac}(\hat{\mathcal A}).$
 Hence,  by the remarks in page 76 of \cite{ya} the wave operators  $W_\pm(\phi(\hat{\mathcal A}), \phi(\hat{\mathcal A}_0))$ exist and
  \beq\label{wa.20}
   W_\pm(\phi(\hat{\mathcal A}), \phi(\hat{\mathcal A}_0))= \Theta_\pm(\phi(\hat{\mathcal A}),\phi(\hat{\mathcal A}_0)).
  \ene
  Equation \eqref{wa.16} follows from \eqref{wa.19} and \eqref{wa.20}. Finally, by \eqref{wa.16} the wave operators $  W_\pm(\phi(\mathcal A), \phi(\mathcal A_0))$ are   partially isometric with initial subspace $ \hat{\mathcal H}_{\rm ac}(\hat{\mathcal A}_0)$ and final subspace $\hat{\mathcal H}_{\rm ac}(\hat{\mathcal A})$
 since this is true for the wave operators $W_\pm(\hat{\mathcal A},\hat{\mathcal A}_0).$
  \end{proof}
The scattering operator is defined as follows,
\beq\label{wa.21}
S:= W_+^\ast W_-.
\ene 
 By Theorem~\ref{waop} $S$  is unitary on $\hat{\mathcal H}.$
 \section{Asymptotic stability, large time asymptotics and Landau damping}\label{ladam}
 In this section we obtain our results in asymptotic stability namely,  in the large time asymptotic behaviour of the 
 phase-space densities and on Landau damping for the electric field. Let us take,
   \beq\label{9.1}
  U(0):= \begin{pmatrix} u_{-,0}(x,v)\\ u_{+,0}(x,v)\\ F_0(x).
  \end{pmatrix} \in {\mathcal H}_{\rm ac}(\mathcal A). 
  \ene
 Then, the solution to the Vlasov-Amp\`ere system \eqref{0.32}-\eqref{0.35} is given by,
  \beq\label{9.2}
  U(t):= e^{-it \mathcal A} U(0)= \begin{pmatrix} u_{-}(t,x,v)\\ u_{+}(t,x,v)\\ F(t,x)
  \end{pmatrix} \in {\mathcal H}_{\rm ac}(\mathcal A). 
  \ene
As $ U(0) \in \mathcal H_{\rm ac}(\mathcal A)$ and   as ${\mathcal H}_{\rm ac}({\mathcal A}) \subset ({\rm Ker}[{\mathcal A}]^\perp,$  it follows from Proposition~\ref{gauss} (c) that $U(0)$ fulfills the Gauss law \eqref{6.10c}. In the same way,  we see  that $U(t)$ satisfies the Gauss law \eqref{6.10c}.
Recall that the unperturbed Vlasov--Amp\`ere system is given by,
\beq\label{9.3}
i \partial_t U(t)= {\mathcal A}_0 U(t).
\ene
We denote
\beq\label{9.4}
U^{(\pm)}(0):=  {\mathbf V}^\ast  W_\pm^\ast \mathbf V U(0) \in {\mathcal H}_{\rm ac}({\mathcal A}_0),
\ene
where $\mathbf V$ is the unitary operator from $\mathcal H$ onto $\hat{\mathcal H}$ defined  in \eqref{5.7}, \eqref{5.8}. We denote by $U^{(\pm)}(t)$ the solution to \eqref{9.3} with initial value at $t=0$ equal to   $U^{(\pm)}(0)$,
\beq\label{9.5}
  U^{(\pm)}(t)= e^{-it {\mathcal A}_0} U^{\pm}(0).
\ene
 Then, by Theorem~\ref{waop}  
\beq\label{9.6}
\lim_{t \to \pm \infty}\left\|U(t)-U^{(\pm)}(t)\right\|_{\mathcal H}=0.
\ene
In the following theorem, using equations \eqref{9.6}, we obtain a precise description of the large time  behaviour of the solutions to the Vlasov- Amp\`ere system \eqref{0.32}-\eqref{0.35}.   
\begin{theorem}\label{landam}
 Suppose that Assumption~\ref{assum0} is satisfied, that    $h'_\pm(v^2/2) v^2 $  is  a integrable function of $v\in \mathbb R,$ and that $h_\pm(\lambda)$ have a continuous second derivative that fulfills  \eqref{6.18qww}.   Let $ f_\mp(t,x,v)$ and $F(x,t)$  be, respectively,  the phase-space densities, and  the electric field  of the solution $U(t)$  to the Vlasov-Amp\`ere system  \eqref{0.32}-\eqref{0.35}  given in \eqref{9.2}, with $U(0) \in {\mathcal H}_{\rm ac}(\mathcal A).$ Moreover, let $ f^{(\pm)}_-(t,x,v),$   $ f^{(\pm)}_+(t,x,v),$ and  $F^{(\pm)}(t,x)$ be, respectively,  the phase-space densities, and the electric field,  of the solutions \eqref{9.5} to the unperturbed Vlasov-Amp\`ere system \eqref{9.3} with $ U^{\pm}(0) \in {\mathcal H}_{\rm ac}(\mathcal A_0)= {\rm Ker}[\mathcal A_0]^\perp.$ 
We have:
\begin{enumerate}
\item[\rm (a)] 
\beq\label{9.7}\begin{array}{l}
\lim_{t \to \infty}  \| f_-(t,\cdot,\cdot)-   f^{(+)}_-(t,\cdot,\cdot) \|_{ L^2((-P/2,P/2) \times \mathbb R)}=0,
\\[.3cm]
\lim_{t \to \infty}  \| f_+(t,\cdot,\cdot)-   f^{(+)}_+(t,\cdot,\cdot) \|_{ L^2((-P/2,P/2) \times \mathbb R)}=0,
\\[.3cm]
\lim_{t \to -\infty}  \| f_-(t,\cdot,\cdot)-   f^{(-)}_-(t,\cdot,\cdot) \|_{ L^2((-P/2,P/2) \times \mathbb R)}=0,
\\[.3cm]
\lim_{t \to -\infty}  \| f_+(t,\cdot,\cdot)-   f^{(-)}_+(t,\cdot,\cdot) \|_{ L^2((-P/2,P/2) \times \mathbb R)}=0.
\end{array}
\ene 
This gives a precise description of the large time behaviour of the phase-space densities. Namely,  for large times they are asymptotic to solutions of the unperturbed  Vlasov-Amp\`ere system \eqref{9.3}. This implies that they follow the trajectories of the solutions of Newton's equations \eqref{0.15b} for electrons and ions with the potential of the  BGK wave, in the sense that they are transported along these trajectories.  

\item[\rm (b)]   $F^{(\pm)}(t,x)\equiv 0,$ for $ t \in \mathbb R$ and  $x \in (-P/2, P/2).$  Furthermore, for any $ 0 < \alpha < 1/2,$
\beq\label{9.8}
\lim_{t \to \pm \infty} \| F(t,\cdot)\|_{\hat{C}_{\alpha}((-P/2,P/2))}=0.
\ene
This is the Landau damping of the electric field.
\end{enumerate} 
\end{theorem}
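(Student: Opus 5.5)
The plan is to read everything off the strong asymptotic completeness statement \eqref{9.6}, which follows from Theorem~\ref{waop}. Part (a) needs only bookkeeping. Part (b) needs, in addition, the Gauss law from Proposition~\ref{gauss} (c) and an interpolation argument.

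\emph{Part (a).} By \eqref{0.25} and \eqref{1.59}, the perturbations of the phase-space densities are
\[
u_\mp(t,x,v)\sqrt{|h'_\mp(\mathcal E_\mp(x,v))|}.
\]
The functions $f^{(\pm)}_\mp$ are built the same way from the components $u^{(\pm)}_\mp$ of $U^{(\pm)}(t)$. I first check that $\sup_{x,v}|h'_\mp(\mathcal E_\mp(x,v))|<\infty$. This holds because $h'_\mp$ is continuous, $\varphi_0$ is bounded, and the decay hypotheses (integrability of $h'_\pm(v^2/2)v^2$ together with \eqref{6.18qww}) control $h'$ for large energies. Then
\[
\|f_\mp(t)-f^{(\pm)}_\mp(t)\|_{L^2}\le C\,\|u_\mp(t)-u^{(\pm)}_\mp(t)\|_{L^2}\le C\,\|U(t)-U^{(\pm)}(t)\|_{\mathcal H},
\]
and \eqref{9.6} gives \eqref{9.7}.

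\emph{First claim of part (b).} By Theorem~\ref{thoesefad} (c),(d), every vector $(0,0,F)$ with $F\in L^2_0$ lies in $\mathrm{Ker}[\mathcal A_0]$. Since $U^{(\pm)}(0)\in\mathcal H_{\rm ac}(\mathcal A_0)=\mathrm{Ker}[\mathcal A_0]^\perp$, its third component vanishes. The third row of $\mathcal A_0$ is zero, and $e^{-it\mathcal A_0}$ preserves $\mathrm{Ker}[\mathcal A_0]^\perp$. Hence $F^{(\pm)}(t)\equiv0$ for all $t$.

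\emph{Landau damping.} The third component of \eqref{9.6} then gives $\|F(t)\|_{L^2}\to0$ as $t\to\pm\infty$. Next I bound $\partial_xF(t)$ uniformly in time. As noted after \eqref{9.2}, $U(t)$ satisfies the Gauss law \eqref{6.10c} for every $t$. Applying Cauchy--Schwarz in $v$ gives
\[
|\partial_xF(t,x)|^2\le 2\sum_{\nu=\pm}\Big(\int_{\mathbb R}|h'_\nu(\mathcal E_\nu(x,v))|\,dv\Big)\int_{\mathbb R}|u_\nu(t,x,v)|^2\,dv .
\]
Here $\sup_x\int|h'_\nu(\tfrac12v^2\pm\varphi_0(x))|\,dv<\infty$ by the integrability assumptions and the boundedness of $\varphi_0$. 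Integrating in $x$ and using unitarity yields $\|\partial_xF(t)\|_{L^2}\le C\|U(t)\|_{\mathcal H}=C\|U(0)\|_{\mathcal H}$. So $F(t)$ is bounded in $H^{1,0}$ uniformly in $t$.

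To conclude, I interpolate. Since $F(t)$ has zero mean, the Gagliardo--Nirenberg inequality gives
\[
\|F\|_\infty\le C\|F\|_{L^2}^{1/2}\|F\|_{H^1}^{1/2}\to0 .
\]
The Sobolev embedding gives $|F(x)-F(y)|\le C\|F\|_{H^1}|x-y|^{1/2}$. For $0<\alpha<1/2$, combining the two bounds as $\min(2\|F\|_\infty,\,C|x-y|^{1/2})$ gives the seminorm estimate
\[
[F(t)]_\alpha\le (2\|F(t)\|_\infty)^{1-2\alpha}\,C^{2\alpha}\to0 .
\]
Membership of $F(t)$ in $\hat C_\alpha$, rather than just $C_\alpha$, follows from $C_{1/2}\subset\hat C_\alpha$ (Proposition~A.1 of \cite{wegal}). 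This gives \eqref{9.8}.

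The main obstacle is the uniform-in-time $H^1$ bound for $F$. It depends on two things: the Gauss law holding for all $t$, which is exactly where the restriction $U(0)\in\mathcal H_{\rm ac}(\mathcal A)\subset\mathrm{Ker}[\mathcal A]^\perp$ is used, and the uniform integrability of $|h'_\pm(\mathcal E_\pm)|$ in $v$. It is also this $H^1$ bound that limits the result to $\alpha<1/2$.
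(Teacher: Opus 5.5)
Your proposal is correct and follows essentially the same route as the paper. Part (a) and $F^{(\pm)}\equiv 0$ are read off from \eqref{9.6} and Theorem~\ref{thoesefad}, and Landau damping comes from $\|F(t)\|_{L^2}\to 0$, the uniform $H^1$ bound \eqref{9.10} given by the Gauss law \eqref{6.10c}, and interpolation between the sup-norm and the $1/2$-H\"older bound; the differences are cosmetic (you invoke Gagliardo--Nirenberg where the paper uses the cutoff splitting $F=F_1+F_2$, and you track the weight $\sqrt{|h'_\mp|}$ in part (a) explicitly, which the paper leaves implicit).
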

\begin{proof} Item (a) follows from  \eqref{9.6}. Let us prove (b).
 As  $U^{(\pm)}(t,x)\in  {\mathcal H}_{\rm ac} ({\mathcal A}_0) \subset {\rm Ker}[{\mathcal A}_0]^\perp$
 it follows from Theorem~\ref{thoesefad} (d) that $F^{(\pm)}(t,x,v)\equiv 0.$ Moreover, by \eqref{9.6}
\beq\label{9.9}
\lim_{t \to \pm \infty} \| F(t,\cdot)\|_{L^2_0((-P/2,P/2))}= \lim_{t \to \pm \infty} \| F(t,\cdot,)-F^{(\pm)}(t,\cdot)\|_{L^2_0((-P/2,P/2))}=0.
\ene
As $ U(t)$ fulfills the Gauss law \eqref{6.10c},
\beq\label{9.10}
\| F\|_{H_1((-P/2,P/2)))}\leq C \|U(t)\|_{\mathcal H}= C \|U(0)\|_{\mathcal H}.
\ene
Let $\varphi \in C^1(\mathbb R)$ satisfy $ \varphi(x)=0,$ for $ x \leq  -P/2 +\varepsilon,$ and 
$ \varphi(x)=1$ for $ x \geq  -P/2 +2\varepsilon,$ for some $\varepsilon$ with $ 0 < \varepsilon < P.$
We decompose $F(t,x)$ as follows,
\beq\label{9.11}
F(t,x)= F_1(t,x)+ F_2(t,x), F_1(t,x):= \varphi(x) F(t,x), F_2(t,x):= (1-\varphi(x)) F(t,x).
\ene
Hence,
\beq\label{9.12}\begin{array}{l}
|F_1(t,x)|^2=2 \left | \int_{-P/2}^x \,  {\rm Real }(\partial_y F_1(t,x)) \overline{F_1(t,y)}) dy \right|  \leq
\\[.3cm] 2  \|F(t,\cdot)\|_{L^2_0((-P/2,P/2))} \, \|F(t,\cdot)\|_{H_1((-P/2,P/2))}.
\end{array}
\ene
Estimating $F_2(t,x)$ in a similar way and using \eqref{9.10} we obtain,
\beq\label{9.13}
|F(t,x)| \leq  C \sqrt{   \|F(t,\cdot)\|_{L^2_0((-P/2,P/2))}}.
\ene
Moreover,
\beq\label{9.14}\begin{array}{l}
|F(t,x_1)- F(t,x_2)|= \left| \int_{x_1}^{x_2}  \partial_y F(t,y)  dy \right| \leq \sqrt{|x_1-x_2|} \sqrt{ \|F(t,\cdot)\|_{H_1((-P/2,P/2))}} \leq\\[.3cm]
C \sqrt{|x_1-x_2|},
\end{array}
\ene\
where we used  \eqref{9.10}. By \eqref{9.13} and \eqref{9.14}, for $ 0< \upsilon < 1,$
\beq\label{9.15}
|F(t,x_1)- F(t,x_2)| \leq C |x_1-x_2|^{\upsilon/2}\,  \| F(t,\cdot)\|_{L^2_0((-P/2,P/2))}^{(1-\upsilon)/2}.
\ene
Equation \eqref{9.8} follows from \eqref{9.9}, \eqref{9.13}, \eqref{9.15}, and recalling that $ \hat{C}_{\alpha}((-P/2,P/2)) \subset {C}_{\upsilon}((-P/2,P/2))$ for $0 < \alpha <  \upsilon \leq 1,$ as we mentioned in the introduction.
\end{proof}

\appendix 

\section{Appendix}\label{apex}

\renewcommand{\theequation}{\thesection.\arabic{equation}}

\newtheorem{theorem2}{THEOREM}[section]
\renewcommand{\thetheorem}{\arabic{section}.\arabic{theorem}}

\newtheorem{prop2}[theorem2]{Proposition}
\newtheorem{lemma2}[theorem2]{Lemma}
 In the   following proposition we state relevant properties of the period function   for the electrons and for the ions. 
 For similar results in the one-species case with a fixed ion background see \cite{bruno2}.
  \begin{prop2}\label{proper} 
Suppose that item (b) of  Assumption~\ref{assum0} holds. Then, we have.
 \begin{enumerate}
 \item[\rm (a)] 
 \beq\label{ap.3}T_{-,0}(-\varphi_0(P/2)-\varepsilon)= -\frac{2}{\sqrt{\varphi_0''(P/2)}} \ln\varepsilon +C+o(1), \qquad  \varepsilon \downarrow 0.
 \ene
 \item[\rm (b)] 
    \beq\label{ap.4}T_{-,1}(-\varphi_0(P/2)+\varepsilon)= -\frac{1}{\sqrt{\varphi_0''(P/2)}} \ln\varepsilon +C+O(\varepsilon), \qquad  \varepsilon \downarrow 0.
    \ene
 \item[\rm (c)] \beq\label{ap.5}
 \lim_{\varepsilon \downarrow 0} T_{-,0}(-\varphi_0( 0)+\varepsilon)= \frac{2\pi }{\sqrt{-\varphi_0''(0)}}.
 \ene
  \item[\rm(d)]  $T_{-, 0}(\mathcal E_-)$ is twice continuously differentiable in $I_{-,0}$ and  if furthermore, \eqref{1.22b} in  item (c) of Assumption~\ref{assum0} holds, it is  strictly increasing  in  $I_{-,0}.$

 \item[(e)] $T_{-,1} (\mathcal E_-)$ is twice continuously differentiable and strictly decreasing in
 $I_{-,1}.$ Further,
 \beq\label{ap.5b}
 T_{-,1} (\mathcal E_-)=O\left(\frac{1}{\sqrt{\mathcal E_-}}\right), \qquad \mathcal E_- \to \infty.
 \ene
     \item[\rm (f)] 
 \beq\label{ap.6}T_{+,0}(\varphi_0(0)-\varepsilon)= -\frac{2}{\sqrt{-\varphi_0''(0)}} \ln\varepsilon +C+o(1), \qquad  \varepsilon \downarrow 0.
 \ene
  \item[\rm (g)] 
    \beq\label{ap.7}T_{+,1}(\varphi_0(0)+\varepsilon)= -\frac{1}{\sqrt{-\varphi_0''(0)}} \ln\varepsilon +C+O(\varepsilon), \qquad  \varepsilon \downarrow 0.
    \ene
 \item[\rm (h)] \beq\label{ap.8}
 \lim_{\varepsilon \downarrow 0} T_{+,0}(\varphi_0( P/2)+\varepsilon)= \frac{2\pi }{\sqrt{\varphi_0''(P/2)}}.
 \ene
  \item[\rm (i)]  $T_{+, 0}(\mathcal E_+)$ is twice continuously differentiable  in  $I_{+,0}$
  and  if furthermore, \eqref{1.22bb} in  item (c) of Assumption~\ref{assum0} holds, it is  strictly increasing  in  $I_{+,0}.$ 
  
  \item[\rm (j)] $T_{+,1} (\mathcal E_+)$ is twice continuously differentiable and strictly decreasing in
 $I_{+,1}.$ Further,
 \beq\label{ap.8b}
 T_{+,1} (\mathcal E_+)=O\left(\frac{1}{\sqrt{\mathcal E_+}}\right), \qquad \mathcal E_+ \to \infty.
 \ene
 
     \end{enumerate}
 \end{prop2}
 \begin{proof} Items (a), (b), and (c) are proved, respectively, in Lemmata 6.4, 6.2, and 6.5 of \cite{bruno2}. To prove (f) and (h) we define $\tilde{\varphi}_0(x)= \varphi_0(x), -P \leq x \leq 0.$ We denote $\tilde{\mathcal E}_+:= \frac{1}{2}v^2+ \tilde{\varphi}(x).$ We designate by $\tilde{T}(\tilde{\mathcal E}_+)$ the period function of $\tilde{\varphi}_0(x).$ Remark that $\tilde{T}(\tilde{\mathcal E}_+)=  T_{+,0}(\mathcal E_+ ),$ if  $\tilde{\mathcal E}_+= \mathcal E_+.$ Then, (f) and (h) follow, respectively, as in the proofs of Lemmata 6.4 and 6.5 of \cite{bruno2} with minor changes. Item (g) is proven as in the proof of Lemma 6.2 of \cite{bruno2} with minor changes. Items (e)
 and (j) are immediate from the definitions of  $T_{-,1} (\mathcal E_-)$ and of $T_{+,1} (\mathcal E_+)$ respectively in \eqref{1.81} and \eqref{1.93}. Let us prove (d).  In Theorem2.1 of \cite{cw} it is proved that $T_{-,0}(\mathcal E_-)$ is continuously differentiable on $I_{-,0}$ and the derivative is computed as follows
 \beq\label{ap.9}
 \frac{d}{d \mathcal E_-} T_{-,0}(\mathcal E_-)= \frac{2}{\mathcal E_-+\varphi_0(0)} \int_0^{x_+(\mathcal E_-)}\,\frac{(\varphi'_0(x))^2+2 (\varphi_0(0)-\varphi_0(x)) \varphi_0''(x)  }{ \sqrt{2(\mathcal E_-+\varphi_0(x))} \, (\varphi'_0(x))^2}\, dx.
 \ene
 Then, by \eqref{1.22b} in item (c) in Assumption~\ref{assum0} $\frac{d}{d \mathcal E_-} T_{-,0}(\mathcal E_-)>0.$
 Further, we prove that $T_{-,0}(\mathcal E_-)$  has a continuous second derivative for $\mathcal E_- \in I_{-,0}$ as in the proof of item (b) in Proposition A.2 of \cite{wegal}. 
 This proves (d). To prove (i) we proceed as in the proof of 
 (f) and (h).  We prove that  $\tilde{T}(\tilde{\mathcal E}_+)$ is  twice continuously differentiable and strictly increasing for $ \mathcal E_+ \in I_{+,0}$ as in the proof of (d) using \eqref{1.22bb}. Further, as  
 $\tilde{T}(\tilde{\mathcal E}_+)=  T_{+,0}(\mathcal E_+ ),$ if  $\tilde{\mathcal E}_+= \mathcal E_+,$ item (i) follows.

 \end{proof}
 In the following proposition we obtain properties of the angle variables for the electrons and for the ions. Recall
 that in Definition~\ref{defang} we have extended the domain of  the variable $x$   for the angles $\theta_{\mp,0}(x,\mathcal E_\mp).$ 
 \begin{prop2}\label{propang}  Suppose that item (b) of  Assumption~\ref{assum0} holds
 Then,  the following is true.
  \begin{enumerate}
  \item[(a)] For every $0 <\delta < \frac{1}{2}( \varphi_0(0)-\varphi_0(P/2)  )$ there is a constant $C$ such that,
   \beq\label{ap.12}\begin{array}{l} 
\left | \theta_{-,0}(x, \lambda_1)- \theta_{-,0}(x, \lambda_2)\right | \leq C \sqrt{|\lambda_1-\lambda_2|}, \\\lambda_1, \lambda_2 \in  [-\varphi_0(0)+\delta, - \varphi_0(P/2)-\delta],  x \in [-P/2, P/2].
\end{array}
\ene
  \item[(b)]
  For every $0 <\delta< \frac{1}{2} (\varphi_0(0)-\varphi_0(P//2)) $ there is a constant $C$ such that,
   \beq\label{ap.13}\begin{array}{l}
\left | \theta_{+,0,\mp}(x, \lambda_1)- \theta_{+,0,\mp}(x, \lambda_2)\right | \leq C \sqrt{|\lambda_1-\lambda_2|}, \\ \lambda_1,  \lambda_2 \in [  \varphi_0(P/2)+\delta, \varphi_0(0)-\delta ], x \in [-P/2, P/2].
\end{array}
\ene
  \item[(c)] For every $ \delta >0$  there is a constant $C$ such that,
   \beq\label{ap.13b}\begin{array}{l}
\left | \theta_{-,1}(x, \lambda_1)- \theta_{-,1}(x, \lambda_2)\right | \leq C {|\lambda_1-\lambda_2|}, \\\lambda_1, \lambda_2 \in [-\varphi_0(P/2)+\delta, \infty), x \in[-P/2,P/2].   
 \end{array}
 \ene
   \item[(d)] For every $ \delta >0$  there is a constant $C$ such that,
   \beq\label{ap.13c}\begin{array}{l}
\left | \theta_{+,1}(x, \lambda_1)- \theta_{+,1}(x, \lambda_2)\right | \leq C {|\lambda_1-\lambda_2|},\\ \lambda_1, \lambda_2 \in [\varphi_0(0)+\delta, \infty), x \in [-P/2, P/2].
 \end{array}
 \ene
 \end{enumerate}
\end{prop2}  
\begin{proof} Item (a) is proved in  (c) of  Proposition A.2 of \cite{wegal}. To prove (b) we proceed as in the proof of
 (f), (h), and (i) of  Proposition~\ref{proper}. We give the proof for $\theta_{+,0,-}.$ The case of $\theta_{+,0,+}$ follows in the same way. We define $\tilde{\varphi}_0(x)= \varphi_0(x), -P \leq x \leq 0.$ We denote $\tilde{\mathcal E}_+:= \frac{1}{2}v^2+ \tilde{\varphi}(x).$ We designate by $\tilde{x}_\mp(\tilde{\mathcal E}_+)$ the solutions to $\tilde{\varphi_0}(\tilde{x}_\mp(\tilde{\mathcal E}_+))= \tilde{\mathcal E}_+$ with 
$\tilde{x}_-(\tilde{\mathcal E}_+) < \tilde{x}_+(\tilde{\mathcal E_+}).$ We designate by $\tilde{T}(\tilde{\mathcal E}_+)$ the period function of $\tilde{\varphi}_0(x),$
\beq\label{ap.14}
\tilde{T}(\tilde{\mathcal E}_+):= 2 \int_{\tilde{x}_-(\tilde{\mathcal E}_+)}^{\tilde{x}_+(\tilde{\mathcal E}_+)}\,
\frac{1}{\sqrt{\tilde{\mathcal E}_+- \tilde{\varphi}_0(x)}} \, dx, \qquad \tilde{\mathcal E}_+ \in (\tilde{\varphi}_0(-P/2), \tilde{\varphi}_0(-P)).
\ene
 Remark that $\tilde{T}(\tilde{\mathcal E}_+)=  T_{+,0}(\mathcal E_+ ),$ if  $\tilde{\mathcal E}_+= \mathcal E_+.$ 
Further, we denote by $\tilde{\theta}(x, \tilde{\mathcal E}_+)$ the angle for  $\tilde{\varphi}_0(x),$
\beq\label{ap.15}
\tilde{\theta}(x, \tilde{\mathcal E}_+):= \frac{1}{\tilde{T}(\tilde{\mathcal E}_+)} \int_{\tilde{x}_-(\tilde{\mathcal E}_+)}^x\, \frac{1}{\sqrt{\tilde{\mathcal E}_+- \tilde{\varphi}_0(x)}} \, dx, \qquad \tilde{x}_-(\tilde{\mathcal E}_+) \leq x \leq \tilde{x}_+(\tilde{\mathcal E_+}_+).
\ene
Note that, if $\mathcal E_+= \tilde{\mathcal E}_+,$
\beq\label{ap.16}
\theta_{+,0,-}(x, \mathcal E_+)= \tilde{\theta}(x, \tilde{\mathcal E}_+)- \tilde{\theta}(P/2, \tilde{\mathcal E}_+), \qquad  x \in [-P/2, x_-(\mathcal E_+)],
\ene 
where we used that   $  x_-(\mathcal E_+)=  \tilde{x}_+(\tilde{\mathcal E}_+).$
Then, (b) for $\theta_{+,0,-}$ follows from  item  (c)  of  Proposition A.2 of \cite{wegal}. Items (c) and (d) are immediate from the definition of $\theta_{\mp,1},$ respectively in \eqref{1.82}, and \eqref{1.94}.
\end{proof}
 
 \noindent {\bf Acknowledgement}

\noindent Ricardo Weder is an Emeritus National Researcher of SNII-SECIHTI, M\'exico. He thanks SNII-SECIHTI
for the support to this research.
 
\end{document}